\newcommand{\al}{\alpha}
\newcommand{\ga}{\gamma}
\newcommand{\de}{\delta}
\newcommand{\la}{\lambda}
\newcommand{\vv}{\varphi}
\newcommand{\iy}{\infty}
\theoremstyle{plain}
\numberwithin{equation}{section}
\newtheorem{thm}{Theorem}[section]
\newtheorem{lem}[thm]{Lemma}
\newtheorem{prop}[thm]{Proposition}
\theoremstyle{definition}
\newtheorem{alg}[thm]{Algorithm}
\newtheorem{ip}[thm]{Inverse Problem}
\theoremstyle{remark}
\DeclareMathOperator*{\Res}{Res}
\DeclareMathOperator{\rank}{rank}
\DeclareMathOperator{\Ran}{Ran}
\DeclareMathOperator{\Ker}{Ker}
\begin{document}

\begin{center}
{\large\bf Inverse problem solution and spectral data characterization for the matrix Sturm-Liouville operator with singular potential}
\\[0.2cm]
{\bf Natalia P. Bondarenko} \\[0.2cm]
\end{center}

\vspace{0.5cm}

{\bf Abstract.} The matrix Sturm-Liouville operator on a finite interval with singular potential of class $W_2^{-1}$ and the general self-adjoint boundary conditions is studied. This operator generalizes the Sturm-Liouville operators on geometrical graphs. We investigate the inverse problem that consists in recovering the considered operator from the spectral data (eigenvalues and weight matrices). The inverse problem is reduced to a linear equation in a suitable Banach space, and a constructive algorithm for the inverse problem solution is developed. Moreover, we obtain the spectral data characterization for the studied operator.

\medskip

{\bf Keywords:} inverse spectral problems; matrix Sturm-Liouville operator; singular potential; method of spectral mappings; spectral data characterization.

\medskip

{\bf AMS Mathematics Subject Classification (2010):} 34A55 34B09 34B24 34L40 

\vspace{1cm}

\section{Introduction}

This paper is devoted to an inverse spectral problem for the matrix Sturm-Liouville operator $-Y'' + Q(x) Y$, where $Q(x)$ is an $(m \times m)$-matrix function called the \textit{potential}.

Inverse problems of spectral analysis consist in reconstruction of operators from their spectral information. The greatest success in inverse problem theory has been achieved for the \textit{scalar} Sturm-Liouville operators (for $m = 1$), see the classical monographs \cite{Mar77, Lev84, PT87, FY01} and references therein. \textit{Matrix} Sturm-Liouville operators have been intensively studied in connection with various applications. In particular, inverse problems for such operators are used in quantum mechanics \cite{AM63}, in elasticity theory \cite{BHN95}, for description of electromagnetic waves \cite{BS95} and nuclear structure \cite{Chab04}, for solving matrix nonlinear evolution equations by inverse spectral transform \cite{CD77}.

For the matrix Sturm-Liouville operators on a \textit{finite interval}, the majority of studies deal with the Dirichlet boundary conditions
$$
Y(0) = Y(\pi) = 0
$$
or the Robin boundary conditions
$$
Y'(0) - H_1 Y(0) = 0, \quad Y'(\pi) + H_2 Y(\pi) = 0,
$$
where $H_1$ and $H_2$ are constant $(m \times m)$-matrices.
Uniqueness of recovering such operators from various spectral characteristics has been proved by Carlson \cite{Car02}, Chabanov \cite{Chab04}, Malamud \cite{Mal05}, Yurko \cite{Yur06-uniq}, and Shieh \cite{Sh07}. Yurko \cite{Yur06} proposed a constructive method, based on spectral mappings, for solving such inverse problems. Further, this method has been developed by Bondarenko \cite{Bond11} for working with multiple eigenvalues. The most difficult and, at the same time, the most important issue of inverse problem theory is the spectral data characterization. For the matrix Sturm-Liouville operators on a finite interval, this issue has been independently solved by Chelkak and Korotyaev \cite{CK09}, by Mykytyuk and Trush \cite{MT09}, and by Bondarenko \cite{Bond11, Bond12}. 
The latter approach was also generalized for a certain class of non-self-adjoint matrix Sturm-Liouville operators \cite{Bond19}.

The present paper deals with the matrix Sturm-Liouville operator with the self-adjoint boundary conditions in the general form defined below.
Denote by $\mathbb C^m$ and $\mathbb C^{m \times m}$ the spaces of complex $m$-vectors and $(m \times m)$-matrices, respectively. For an interval $\mathbb I$ and a class $\mathcal A(\mathbb I)$ of functions defined on $\mathbb I$ (e.g., $\mathcal A = L_2, C, \dots$), we denote by $\mathcal A(\mathbb I; \mathbb C^m)$ and $\mathcal A(\mathbb I; \mathbb C^{m \times m})$ the classes of complex-valued $m$-vector functions and $(m \times m)$-matrix functions, respectively, with entries from $\mathcal A(\mathbb I)$.

Consider the matrix Sturm-Liouville problem $L = L(\sigma, T_1, T_2, H_2)$:
\begin{gather} \label{eqv}
    \ell Y := -(Y^{[1]})' - \sigma(x) Y^{[1]} - \sigma^2(x) Y = \la Y, \quad x \in (0, \pi), \\ \label{bc}
    V_1(Y) := T_1 Y^{[1]}(0) - T_1^{\perp} Y(0) = 0, \quad V_2(Y) := T_2 (Y^{[1]}(\pi) - H_2 Y(\pi)) - T_2^{\perp} Y(\pi) = 0. 
\end{gather}
where $Y = [y_j(x)]_{j = 1}^m$ is a vector function, $\sigma \in L_2((0, \pi); \mathbb C^{m \times m})$, $\sigma(x) = (\sigma(x))^{\dagger}$ a.e. on $(0, \pi)$, $Y^{[1]}(x) := Y'(x) - \sigma(x) Y(x)$ is the {\it quasi-derivative}, $\la$ is the spectral parameter, for $j = 1, 2$, $T_j \in \mathbb C^{m \times m}$, $T_j$ is an orthogonal projection matrix, $T_j^{\perp} = I - T_j$, $H_2 \in \mathbb C^{m \times m}$, $H_2 = H_2^{\dagger} = T_2 H_2 T_2$, $I$ is the $(m \times m)$-unit matrix, the symbol $\dagger$ denotes the conjugate transform.
Under these assumptions, the problem $L$ is self-adjoint.
We suppose that $Y$ belongs to the domain
$$
\mathcal D(L) := \{ Y \colon Y, Y^{[1]} \in AC([0, \pi]; \mathbb C^m), (Y^{[1]})' \in L_2((0, \pi); \mathbb C^m) \}.
$$
Equation~\eqref{eqv} can be rewritten in the equivalent form
$$
    -Y'' + Q(x) Y = \la Y, \quad x \in (0, \pi),
$$
with the singular potential $Q(x) = \sigma'(x)$ of class $W_2^{-1}((0, \pi); \mathbb C^{m \times m})$. The derivative of $L_2$-function is understood in the sense of distributions. However, it is more convenient to use the form~\eqref{eqv}.

Relations~\eqref{bc} describe the general self-adjoint form of separated boundary conditions. The matrix Sturm-Liouville operator given by~\eqref{eqv}-\eqref{bc} causes interest because it generalizes Sturm-Liouville operators on geometrical graphs. The latter operators are used for modeling wave propagation in graph-like structures consisting of thin tubes, strings, beams, etc. 
Differential operators on graphs attract much attention of mathematicians and physicists in recent years in connection with applications in nanotechnology, organic chemistry, mechanics, and other branches of science and engineering (see \cite{Kuch02, PPP04, Exner08, BK13} and references therein). The general self-adjoint boundary conditions in the form
$$
T Y'(v) + H Y(v) = 0, \quad T^{\perp} Y(v) = 0,
$$
where $T$ and $T^{\perp}$ are complimentary projection matrices, $H = H^{\dagger} = T H T$, have been introduced by Kuchment \cite{Kuch04}. In the literature (see, e.g., \cite{Now07}), the other equivalent forms of parametrization also appear:
$$
A Y(v) + B Y'(v) = 0,
$$
where the $(m \times 2m)$-matrix $[A, B]$ has the maximal rank $m$ and the matrix $A B^{\dagger}$ is Hermitian, and
$$
- i (U + I) Y(v) + (U - I) Y'(v) = 0,
$$
where $U$ is a unitary matrix.

Inverse problems for the matrix Sturm-Liouville operator on a finite interval with general self-adjoint boundary conditions and \textit{regular} potential of class $L_2$ have been recently studied by Xu \cite{Xu19}. However, paper \cite{Xu19} is only concerned with uniqueness theorems. The issues of constructive solution and spectral data characterization for this operator appeared to be more difficult for investigation because of complex asymptotic behavior of the spectrum and structural properties of the problem. In \cite{Bond19-BVP, Bond20}, properties of the spectral data have been investigated for the matrix Sturm-Liouville operator with boundary condition in the general self-adjoint form at $x = \pi$ and with Dirichlet boundary condition at $x = 0$. Further, a constructive solution procedure has been developed for the corresponding inverse spectral problem (see \cite{Bond20-IPSE}). Those results have been applied for obtaining the spectral data characterization for the Sturm-Liouville operator on the star-shaped graph (see \cite{Bond20-nsc}).

In addition, it worth mentioning that inverse scattering problems have been studied for the matrix Sturm-Liouville operators on the half-line and on the line (see, e.g., \cite{AM63, CD77, Har02, Har05, Wad80, Olm85, AW21}). In particular, Harmer \cite{Har02, Har05} and Aktosun and Weder \cite{AW21} investigated inverse scattering on the half-line with boundary condition in the general self-adjoint form at the origin. Harmer \cite{Har02} also applied those results to the inverse scattering problem on the star-shaped graph consisting of infinite rays. However, the matrix Sturm-Liouville operators on infinite domains usually have a bounded set of eigenvalues, so the inverse problems for them are in some sense easier for investigation than analogous problems on a finite interval.

The majority of mentioned results deal with the case of \textit{regular} (square summable or summable) potentials. For the Sturm-Liouville operators with \textit{singular} (distributional) potentials, there is an extensive literature concerning the \textit{scalar} case. Inverse problems for the scalar operators in the form $-(y^{[1]})' - \sigma(x) y^{[1]} - \sigma^2(x) y$, $\sigma \in L_2(0, \pi)$, were studied by Hryniv and Mykytuyk \cite{HM-sd, HM-2sp}, Savchuk and Shkalikov \cite{SS05}, Djakov and Mityagin \cite{DM09}, and by other authors. Mykytyuk and Trush \cite{MT09} investigate inverse problems for the matrix Sturm-Liouville operators with potential of class $W_2^{-1}$ on a finite interval in a special form, which differs from \eqref{eqv} and can be easily reduced to a Dirac-type operator. Analogous reduction was applied by Eckhardt and co-authors \cite{Eckh14, Eckh15} to the matrix Sturm-Liouville operators on the half-line and on the line.

In this paper, we solve the inverse spectral problem for the matrix Sturm-Liouville operator \eqref{eqv}-\eqref{bc} with singular potential and with general self-adjoint boundary conditions at the both ends of the interval. We obtain an algorithm for reconstruction of the operator by its spectral data and provide the spectral data characterization. On the one hand, our approach is based on the spectral properties of the operator~\eqref{eqv}-\eqref{bc} obtained in our previous study~\cite{Bond-dir}. On the other hand, we rely on the method of spectral mappings for constructive solution of the inverse problem. This method has been initially developed by Yurko for operators with regular coefficients (see \cite{FY01}). This method allows one to reduce a nonlinear inverse problem to a linear equation in a suitable Banach space. Such reduction leads to a constructive procedure for solving an inverse problem and also can be used for investigating global solvability, local solvability, stability, and other issues of inverse problem theory. Yurko's method has been modified for the Sturm-Liouville operators with singular potentials by Freiling, Ignatiev, and Yurko \cite{FIY08} and by Bondarenko \cite{Bond-scal}. An approach to inverse problems for the matrix Sturm-Liouville operators has been developed in \cite{Bond11, Bond19, Bond20-IPSE, Bond20-nsc}. In the present paper, we combine the ideas of the mentioned studies to solve the inverse problem for the operator \eqref{eqv}-\eqref{bc}. 

The paper is organized as follows. In Section~2, we describe asymptotical and structural properties of the spectral data, formulate the inverse problem, the corresponding uniqueness theorem, and our main theorem (Theorem~\ref{thm:nsc}) on the characterization of the spectral data. The proof of Theorem~\ref{thm:nsc} is contained in Sections~3-6. In Section~3, we reduce the nonlinear inverse problem to a linear equation in a special Banach space. That equation is called {\it the main equation} of the inverse problem. In Section~4, we obtain auxiliary estimates concerning the operator participating in the main equation and some other characteristics. In Section~5, it is proved that, under the conditions of Theorem~\ref{thm:nsc}, the main equation is uniquely solvable. In Section~6, the proof of Theorem~\ref{thm:nsc} is finished. By using the solution of the main equation, we construct $\sigma$ and $H_2$, and finally arrive at Algorithm~\ref{alg:1} for constructive solution of the inverse problem. 

We overcome the following difficulties specific for our problem.

\begin{enumerate}
    \item The problem $L$ can have an infinite number of groups of multiple and/or asymptotically multiple eigenvalues. Therefore, in the construction of the main equation in Section~3, we use the special grouping \eqref{defG} of the eigenvalues with respect to their asymptotics.
    
    \item Because of the singular potential, we need to obtain some precise estimates related with the operator participating in the main equation (see Lemmas~\ref{lem:Rl2}-\ref{lem:Rl1}). These estimates play an important role in the proofs of the main equation solvability. 
    Such estimates do not needed in the case of regular potential.
    
    \item When the matrix function $\sigma(x)$ is constructed by using the spectral data, we cannot directly substitute this function into equation~\eqref{eqv} and so have to approximate it by smooth matrix functions $\sigma^N$.
\end{enumerate}

\section{Preliminaries and main results}

In this section, we define the spectral data and provide their properties obtained in \cite{Bond-dir}. Further, we formulate the inverse problem (Inverse Problem~\ref{ip:1}), the corresponding uniqueness theorem (Proposition~\ref{prop:uniq}), and our main result (Theorem~\ref{thm:nsc}). The latter theorem gives necessary and sufficient conditions for the inverse problem solvability, or, in other words, the spectral data characterization.

Let us start with the {\bf notations}:

\begin{enumerate}
\item Denote $\rho := \sqrt \la$, $\arg \rho \in \left[ -\frac{\pi}{2}, \frac{\pi}{2} \right)$ (unless stated otherwise).
\item We use the Euclidean norm in $\mathbb C^m$:
$$
  \| a \| = \left( \sum_{j = 1}^m |a_j|^2 \right)^{1/2}, \quad a = [a_j]_{j = 1}^m,    
$$
and the corresponding matrix norm  $\| A \|$ equal to the maximal singular value of $A$.
\item The scalar product in the Hilbert space
$L_2(\mathbb I; \mathbb C^m)$ is defined as follows:
\begin{gather*}
(Y, Z) = \int_{\mathbb I} (Y(x))^{\dagger} Z(x) \, dx = \sum_{j = 1}^m \int_{\mathbb I} \overline{y_j(x)} z_j(x) \, dx, \\ Y = [y_j(x)]_{j = 1}^m, Z = [z_j(x)]_{j = 1}^m \in L_2(\mathbb I; \mathbb C^m).
\end{gather*}
\item The same symbol $C$ is used for various positive constants independent of $n$, $x$, $\la$, etc.
\end{enumerate}

Let $\vv(x, \la)$ be the matrix solution of equation~\eqref{eqv} satisfying the initial conditions $\vv(0, \la) = T_1$, $\vv^{[1]}(0, \la) = T_1^{\perp}$. Clearly, the matrix functions $\vv(x, \la)$ and $\vv^{[1]}(x, \la)$ are entire in $\la$ for each fixed $x \in [0, \pi]$. The eigenvalues of the problem $L$ coincide with the zeros of the entire characteristic function $\Delta(\la) := V_2(\vv(x, \la))$ with their multiplicities.

The matrix function $\vv(x, \la)$ can be represented in the form
\begin{equation} \label{asymptvv}
\vv(x, \la) = (\cos \rho x \, T_1 + \sin \rho x \, T_1^{\perp} + K_x(\rho)) (T_1 + \rho^{-1} T_1^{\perp}),
\end{equation}
where
$$
K_x(\rho) = \int_{-x}^x \mathscr K(x, t) \exp(i \rho t) \, dt,
$$
the kernel $\mathscr K(x, .)$ belongs to $L_2((-x, x); \mathbb C^{m \times m})$ for each fixed $x \in [0, \pi]$ and the norm $\| K(x, .) \|_{L_2((-x, x); \mathbb C^{m \times m})}$ is bounded uniformly by $x \in (0, \pi]$. Using~\eqref{asymptvv} and the analogous relation for $\vv^{[1]}(x, \la)$, we have proved the following proposition in \cite{Bond-dir}.

\begin{prop} \label{prop:asymptla}
The spectrum of $L$ is a countable set of real eigenvalues $\{ \la_{nk} \}_{(n, k) \in J}$, counted with their multiplicities and numbered in non-decreasing order: $\la_{n_1 k_1} \le \la_{n_2 k_2}$ if $(n_1, k_1) < (n_2, k_2)$. The following asymptotic relation holds:
\begin{equation} \label{asymptla}
    \rho_{nk} := \sqrt{\la_{nk}} = n + r_k + \varkappa_{nk}, \quad (n, k) \in J, \quad \{ \varkappa_{nk} \} \in l_2,
\end{equation}
where
\begin{equation} \label{defJ} 
J := \{ (n, k) \colon n \in \mathbb N, \, k = \overline{1, m} \} \cup \{ (0, k) \colon k = \overline{p^{\perp} + 1, m} \}, \quad p^{\perp} := \dim(\Ker T_1 \cap \Ker T_2),
\end{equation}
$\{ r_k \}_{k = 1}^m$ are the zeros of the function $\det(W^0(\rho))$ on $[0, 1)$, $0 \le r_1 \le r_2 \le \dots \le r_m < 1$,
\begin{equation} \label{defW0}
W^0(\rho) := (T_2 T_1 + T_2^{\perp} T_1^{\perp}) \sin \rho \pi + (T_2^{\perp} T_1 - T_2 T_1^{\perp}) \cos \rho \pi.
\end{equation}

\end{prop}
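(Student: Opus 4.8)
\medskip

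\noindent\textbf{Proof idea.}
The eigenvalues of $L$ coincide, together with their multiplicities, with the points $\la$ at which the matrix $\Delta(\la) = V_2(\vv(x,\la))$ is degenerate, i.e. with the zeros of the scalar entire function $\det\Delta(\la)$. Since $L$ is self-adjoint, every such $\la$ is real and its algebraic and geometric multiplicities coincide, so it suffices to study $\det\Delta$. From \eqref{asymptvv}, from the analogous representation for $\vv^{[1]}(x,\la)$, and from the uniform $L_2$-bound on the kernel $\mathscr K(x,\cdot)$, one sees that $\Delta(\la)$ is entire in $\rho=\sqrt\la$, of exponential type at most $\pi$, and has the structure ``trigonometric principal part $+$ small perturbation''; in particular $\det\Delta\not\equiv0$, so the spectrum is a countable set without finite accumulation points.

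To identify the principal part I would substitute \eqref{asymptvv} and the companion formula for $\vv^{[1]}$ into $V_2(Y)=T_2\bigl(Y^{[1]}(\pi)-H_2Y(\pi)\bigr)-T_2^{\perp}Y(\pi)$ and sort the terms by powers of $\rho$, using $T_1T_1^{\perp}=0$ and $T_j^2=T_j$. After factoring out the nonsingular matrix $(T_1+\rho^{-1}T_1^{\perp})$ and an overall scalar power of $\rho$, the leading contribution to $\det\Delta(\la)$ is $c\,\rho^{\nu}\det W^0(\rho)$ with $W^0(\rho)$ as in \eqref{defW0}; the terms carrying $H_2$ and the transformation-operator kernels $K_x,\widetilde K_x$ are of lower order, and on circles of fixed radius surrounding the zero-clusters of $\det W^0$ and on large circles $\{|\rho|=R\}$ they are $o\bigl(|\det W^0(\rho)|\bigr)$ uniformly --- the required decay of $K_x,\widetilde K_x$ along vertical lines being supplied by the Riemann--Lebesgue lemma and Bessel's inequality, since these kernels are Fourier transforms of $L_2$-functions.

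I would then analyze $\det W^0(\rho)$, an entire trigonometric polynomial of $\rho$: one first checks $\det W^0\not\equiv0$ (e.g. using that $W^0(\rho)=z\alpha+z^{-1}\beta$, $z=e^{i\rho\pi}$, with $\alpha,\beta$ invertible), so it has exactly $m$ zeros, counted with multiplicity, in each period, all lying on the real axis; in $[0,1)$ these are $r_1,\dots,r_m$ with $0\le r_1\le\dots\le r_m<1$. On the subspace $\Ker T_1\cap\Ker T_2$ a direct computation from \eqref{defW0} gives $W^0(\rho)=\sin\rho\pi\cdot I$, so $(\sin\rho\pi)^{p^{\perp}}$, $p^{\perp}=\dim(\Ker T_1\cap\Ker T_2)$, divides $\det W^0(\rho)$ and hence $r_1=\dots=r_{p^{\perp}}=0$. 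Applying Rouch\'e's theorem on the contours above yields, for each large $n$, exactly $m$ eigenvalues near the $n$-th cluster, localized as $\rho_{nk}=n+r_k+o(1)$; writing $\rho_{nk}=n+r_k+\varkappa_{nk}$ in $\det\Delta(\la)=0$ and using the $l_2$-smallness (in $n$) of the perturbation yields $\{\varkappa_{nk}\}\in l_2$, which is \eqref{asymptla}. The cluster near $\la=0$ is treated by a separate finite-dimensional argument: there the singular factor $(T_1+\rho^{-1}T_1^{\perp})$ cancels the zero of order $p^{\perp}$ of $\det W^0(\rho)$ at $\rho=0$, so only $m-p^{\perp}$ eigenvalues survive near $\la=0$, which is precisely why the index set has the form \eqref{defJ}. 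Reality of the $\la_{nk}$ and the non-decreasing numbering then follow from self-adjointness.

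\textbf{The main obstacle} I anticipate is the presence of multiple and asymptotically multiple eigenvalues, i.e. the situation when several of the $r_k$ coincide or lie close to one another: then the zeros of $\det\Delta$ near $n+r_k$ merge into a single cluster that cannot be resolved into individual eigenvalues by a scalar implicit-function argument. One must handle the whole cluster at once --- passing to the invariant subspace of the matrix pencil generated by $W^0(\rho)$ corresponding to that group and controlling the elementary symmetric functions of the $\varkappa_{nk}$ inside the group --- and verify that the per-cluster count and the $l_2$-estimate are preserved. Ruling out $\det\Delta\equiv0$ in all admissible (possibly degenerate) configurations and pinning down the exact multiplicity $p^{\perp}$ of the exceptional zero at $\la=0$ are the other delicate points.
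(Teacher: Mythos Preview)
The paper does not actually prove Proposition~\ref{prop:asymptla}; it only states it and refers to \cite{Bond-dir} for the proof, with the single hint that the argument rests on the representation~\eqref{asymptvv} together with the analogous representation for $\vv^{[1]}(x,\la)$. Your outline is fully consistent with that hint: you expand $\Delta(\la)=V_2(\vv)$ via the transformation-operator formulas, extract the trigonometric main term $W^0(\rho)$, and run a Rouch\'e/perturbation argument on $\det\Delta$. This is precisely the standard scheme one expects in \cite{Bond-dir}, so at the level of strategy there is nothing to compare.

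One point in your sketch deserves sharpening. In the treatment of the cluster near $\la=0$ you write that ``the singular factor $(T_1+\rho^{-1}T_1^{\perp})$ cancels the zero of order $p^{\perp}$ of $\det W^0(\rho)$ at $\rho=0$''. Taken literally this is not correct: the determinant of $(T_1+\rho^{-1}T_1^{\perp})$ contributes $\rho^{-\rank T_1^{\perp}}$, not $\rho^{-p^{\perp}}$, and $\rank T_1^{\perp}\ge p^{\perp}$ in general. The true count $m-p^{\perp}$ near $\la=0$ does not come from a scalar cancellation of powers of $\rho$, but from a matrix computation: after factoring out the appropriate powers of $\rho$ from rows and columns of $\Delta(\la)$ (equivalently, conjugating by $T_1+\rho T_1^{\perp}$ on the right and a similar factor on the left built from $T_2$), the resulting entire matrix at $\rho=0$ is nonsingular except on the subspace $\Ker T_1\cap\Ker T_2$, and it is this subspace, of dimension $p^{\perp}$, that accounts for the missing indices $(0,1),\dots,(0,p^{\perp})$ in~\eqref{defJ}. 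Your observation that $W^0(\rho)$ acts as $\sin\rho\pi\cdot I$ on $\Ker T_1\cap\Ker T_2$ is the right ingredient; you just need to combine it with the complementary fact that the remaining block of the normalized characteristic matrix is invertible at $\rho=0$. Once this is done, the rest of your plan (the $l_2$-estimate via Bessel-type bounds on $K_\pi$, and the cluster-wise treatment of multiple $r_k$ via symmetric functions) is sound.
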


The {\it Weyl solution} of $L$ is the matrix solution $\Phi(x, \la)$ of equation~\eqref{eqv} satisfying the boundary conditions $V_1(\Phi) = I$, $V_2(\Phi) = 0$. The matrix function $M(\la) := T_1 \Phi(0, \la) + T_1^{\perp} \Phi^{[1]}(0, \la)$ is called the {\it Weyl matrix} of $L$. The matrix functions $M(\la)$ and $\Phi(x, \la)$ for each fixed $x \in [0, \pi]$ are meromorphic in $\la$. All their singularities are the simple poles at $\la = \la_{nk}$, $(n, k) \in J$. Denote
$$
\al_{nk} := \Res_{\la = \la_{nk}} M(\la), \quad (n, k) \in J.
$$

The matrices $\{ \al_{nk} \}_{(n, k) \in J}$ are called the \textit{weight matrices} and the collection $\{ \la_{nk}, \al_{nk} \}_{(n, k) \in J}$ is called the \textit{spectral data} of $L$.

Let $\la_{n_1 k_1} = \la_{n_2 k_2} = \dots = \la_{n_r k_r}$ be a group of multiple eigenvalues maximal by inclusion, $(n_1, k_1) < (n_2, k_2) < \dots < (n_r, k_r)$. Clearly, $\al_{n_1 k_1} = \al_{n_2 k_2} = \dots = \al_{n_r k_r}$. Define $\al'_{n_1 k_1} := \al_{n_1 k_1}$, $\al_{n_j k_j} := 0$, $j = \overline{2, r}$. We obtain the sequences of matrices $\{ \al'_{nk} \}_{(n, k) \in J}$.

\begin{prop} \label{prop:asymptal}
The weight matrices are Hermitian non-negative definite: $\al_{nk} = \al_{nk}^{\dagger} \ge 0$, $(n, k) \in J$. For each $(n, k) \in J$, $\rank(\al_{nk})$ equals the multiplicity of the eigenvalue $\la_{nk}$. Furthermore, the asymptotic relation holds:
\begin{equation} \label{asymptal}
\al_n^{(k)} := \sum_{r_s \in J_k} \al'_{ns} = \frac{2}{\pi} (T_1 + n T_1^{\perp}) (A_k + K_{nk}) (T_1 + n T_1^{\perp}), \quad n \ge 1, \quad k \in \mathcal J,
\end{equation}
where 
\begin{gather*}
\mathcal J := \{ 1 \} \cup \{ k = \overline{2, m} \colon r_k \ne r_{k-1} \}, \quad J_k := \{ s = \overline{1, m} \colon r_s = r_k \}, \quad \{ \| K_{nk} \| \} \in l_2, \\
A_k := \pi \Res_{\rho = r_k} (W^0(\rho))^{-1} U^0(\rho), \\
U^0(\rho) := (T_2 T_1 + T_2^{\perp} T_1^{\perp}) \cos \rho \pi + (T_2 T_1^{\perp} - T_2^{\perp} T_1) \sin \rho \pi,
\end{gather*}
The matrices $\{ A_k \}_{k \in \mathcal J}$ are orthogonal projection matrices having the following properties:
$$
\rank(A_k) = |J_k|, \qquad A_k A_s = 0, \: k \ne s, \qquad \sum_{k \in \mathcal J} A_k = I.
$$
\end{prop}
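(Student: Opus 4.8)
The plan is to reduce everything to an integral representation of the weight matrices through the residues of the Weyl solution. Write $\Phi(x,\la) = \vv(x,\la) M(\la) + S(x,\la)$, where $S(\cdot,\la)$ is the matrix solution of \eqref{eqv} with $V_1(S) = I$; then $S$ is entire in $\la$, $V_2(\vv) = \Delta$, and $\Delta(\la) M(\la) = -V_2(S(\cdot,\la))$, the right-hand side being entire. Since the poles of $M$ are simple, $\Psi_{nk}(x) := \Res_{\la = \la_{nk}} \Phi(x,\la) = \vv(x,\la_{nk}) \al_{nk}$, and taking residues in $\ell\Phi = \la\Phi$, $V_1(\Phi) = I$, $V_2(\Phi) = 0$ shows $\ell\Psi_{nk} = \la_{nk}\Psi_{nk}$, $V_1(\Psi_{nk}) = V_2(\Psi_{nk}) = 0$, i.e. the columns of $\Psi_{nk}$ are eigenfunctions of $L$ at $\la_{nk}$. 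Next I would use the Lagrange identity for the quasi-derivative form,
\begin{equation*}
\frac{d}{dx}\bigl( Z(x)^{\dagger} Y^{[1]}(x) - (Z^{[1]}(x))^{\dagger} Y(x) \bigr) = (\bar\mu - \la)\, Z(x)^{\dagger} Y(x) ,
\end{equation*}
valid for matrix solutions of $\ell Y = \la Y$, $\ell Z = \mu Z$; applied with $Y = \Phi(\cdot,\la)$, $Z = \Phi(\cdot,\mu)$ and simplified by the consequences of \eqref{bc} ($T_1^{\perp}\Phi(0,\la) = -T_1^{\perp}$, $T_1\Phi^{[1]}(0,\la) = T_1$, $T_2^{\perp}\Phi(\pi,\la) = 0$, $T_2\Phi^{[1]}(\pi,\la) = H_2\Phi(\pi,\la)$), which make the boundary bracket vanish at $x = \pi$ and equal $M(\mu)^{\dagger} - M(\la)$ at $x = 0$, it gives
\begin{equation*}
M(\la) - M(\mu)^{\dagger} = (\bar\mu - \la) \int_0^{\pi} \Phi(x,\mu)^{\dagger}\Phi(x,\la)\, dx .
\end{equation*}
Putting $\mu = \bar\la$ yields $M(\la) = M(\bar\la)^{\dagger}$, hence $\al_{nk} = \al_{nk}^{\dagger}$; multiplying by $|\la - \la_{nk}|^2$, letting $\la \to \la_{nk}$ with nonzero imaginary part, and using $\Phi(x,\la) = (\la - \la_{nk})^{-1}\Psi_{nk}(x) + O(1)$, one obtains $\al_{nk} = \int_0^{\pi}\Psi_{nk}(x)^{\dagger}\Psi_{nk}(x)\,dx = \al_{nk} N_{nk} \al_{nk} \ge 0$, where $N_{nk} := \int_0^{\pi}\vv(x,\la_{nk})^{\dagger}\vv(x,\la_{nk})\,dx$. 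Thus $\al_{nk}$ is the Gram matrix, in $L_2((0,\pi);\mathbb C^m)$, of the columns of $\Psi_{nk}$, which is Hermitian non-negative.

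For the rank, the relation $\Res_{\la = \la_{nk}}(\Delta(\la) M(\la)) = 0$ gives $\Delta(\la_{nk})\al_{nk} = 0$, so $\Ran\al_{nk} \subseteq \Ker\Delta(\la_{nk})$. For the reverse inclusion I would show $\Ker\Delta(\la_{nk}) \cap \Ker\al_{nk} = \{0\}$: for $v$ in this intersection, $M(\la)v$ is analytic at $\la_{nk}$, both $\vv(\cdot,\la_{nk})v$ and $\Phi(\cdot,\la_{nk})v$ solve $\ell Y = \la_{nk} Y$ with $V_2 = 0$, and one more application of the Lagrange identity (boundary bracket zero at $\pi$, and at $0$ equal to $\|v\|^2$ after using $V_1(\vv(\cdot,\la_{nk})v) = 0$, $V_1(\Phi(\cdot,\la_{nk})v) = v$) forces $v = 0$. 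Hence $\Ran\al_{nk} = \Ker\Delta(\la_{nk})$; since $\vv(\cdot,\la_{nk})$ is injective as a map $\mathbb C^m \to L_2((0,\pi);\mathbb C^m)$ and $\mathcal E_{nk} := \vv(\cdot,\la_{nk})\Ker\Delta(\la_{nk})$ is readily seen to be the eigenspace at $\la_{nk}$, we get $\rank\al_{nk} = \dim\Ker\Delta(\la_{nk}) = \dim\mathcal E_{nk}$, which equals the multiplicity of $\la_{nk}$ (geometric, algebraic and counting multiplicities coincide by self-adjointness and Proposition~\ref{prop:asymptla}).

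For the asymptotics \eqref{asymptal}, the idea is to write the cluster sum as a contour integral
\begin{equation*}
\al_n^{(k)} = \frac{1}{2\pi i}\oint_{\gamma_{nk}} M(\la)\, d\la ,
\end{equation*}
where $\gamma_{nk}$ (a circle of fixed radius around $\rho = n + r_k$ in the $\rho$-plane, legitimate for large $n$ by \eqref{asymptla}) encircles exactly the eigenvalues $\{\la_{ns}\}_{s \in J_k}$ — the $\al'$-convention being precisely what makes each distinct eigenvalue inside $\gamma_{nk}$ contribute its weight matrix once. Substituting $M(\la) = -\Delta(\la)^{-1}V_2(S(\cdot,\la))$, inserting the transformation-operator representation \eqref{asymptvv} of $\vv$ (and its analogues for $\vv^{[1]}$ and $S$), and factoring out invertible left and right matrix factors, one reduces $\Delta(\rho^2)$ to $W^0(\rho)$ plus an $L_2$-controlled remainder generated by the kernel $\mathscr K$, and evaluates the residue. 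The $n$-independent leading term comes from the model operator $L^0 := L(0, T_1, T_2, 0)$, for which all the objects are explicit, $(W^0)' = \pi U^0$, and $\Res_{\rho = n + r_k}(W^0(\rho))^{-1}U^0(\rho) = \frac1\pi A_k$; this produces $\frac2\pi(T_1 + n T_1^{\perp}) A_k (T_1 + n T_1^{\perp})$. The remainder $K_{nk}$ is the contribution of $\mathscr K$, and the main obstacle is to prove $\{\|K_{nk}\|\} \in l_2$: this requires the precise estimates for the operators generated by $\mathscr K$ (Parseval/Bessel-type inequalities for the $L_2$-kernel) announced in the Introduction and absent in the regular case. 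The occurrence of infinitely many groups of genuinely or asymptotically multiple eigenvalues is exactly why one must work with the cluster sums $\al_n^{(k)}$ and control $\gamma_{nk}$ uniformly in $n$.

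Finally, the stated properties of the $A_k$ follow by specializing to $L^0$. Applying the already-established first two parts of the proposition to $L^0$ shows that $\frac2\pi(T_1 + n T_1^{\perp}) A_k (T_1 + n T_1^{\perp})$, hence $A_k$, is Hermitian non-negative of rank $|J_k|$ (the size of the $k$-th cluster). That $A_k$ is idempotent follows from the Laurent expansion of $(W^0(\rho))^{-1}$ near the zero $r_k \in [0,1)$ of $\det W^0$, all of whose partial multiplicities equal $1$ by self-adjointness of $L^0$, combined with $\pi U^0 = (W^0)'$, so that $A_k = \Res_{\rho = r_k}(W^0(\rho))^{-1}(W^0(\rho))'$ is a projection with range $\Ker W^0(r_k)$. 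The relation $\sum_{k \in \mathcal J} A_k = I$ is obtained by applying the residue theorem to the $1$-periodic function $(W^0(\rho))^{-1}U^0(\rho)$ over one period, its total residue being $\frac1\pi I$ (the matrix analogue of the residue of $\cot$); and then $A_k A_s = 0$ for $k \ne s$ is automatic, since a family of Hermitian projections summing to $I$ is necessarily pairwise orthogonal.
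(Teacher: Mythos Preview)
The paper does not actually prove Proposition~\ref{prop:asymptal}: it is quoted in Section~2 as a result established in the author's previous paper \cite{Bond-dir} (``In this section, we define the spectral data and provide their properties obtained in \cite{Bond-dir}''). So there is no ``paper's own proof'' here to compare against; all three parts of the proposition are imported.

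That said, your outline is a correct and standard route to such a result. The Herglotz-type identity for $M(\la)$ via the Lagrange bracket, the residue computation giving $\al_{nk} = \al_{nk} N_{nk} \al_{nk} \ge 0$, and the argument $\Ran\al_{nk} = \Ker\Delta(\la_{nk})$ (using that $\al_{nk}$ is Hermitian so $\Ker\al_{nk} = (\Ran\al_{nk})^{\perp}$, whence $\Ker\Delta(\la_{nk}) \cap (\Ran\al_{nk})^{\perp} = \{0\}$ together with $\Ran\al_{nk} \subseteq \Ker\Delta(\la_{nk})$ forces equality) are all sound. The contour-integral approach to \eqref{asymptal} and the specialization to the model problem $L^0$ for the properties of $A_k$ are exactly what one would do. The one place where your sketch remains genuinely incomplete is the $l_2$-bound on $K_{nk}$: you correctly flag it as the main obstacle but do not carry out the estimate. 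That step is where the actual work lies (Bessel-type control of the Fourier coefficients of the $L_2$ kernel $\mathscr K$ along the contours $\gamma_{nk}$, uniformly in $n$), and it is precisely what \cite{Bond-dir} supplies.
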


Consider a group of multiple eigenvalues $\la_{n_1 k_1} = \la_{n_2 k_2} = \dots = \la_{n_r k_r}$ maximal by inclusion. By Proposition~\ref{prop:asymptal}, we have $\rank (\al_{n_1 k_1}) = r$, so $\Ran \al_{n_1 k_1}$ is an $r$-dimensional subspace in $\mathbb C^m$. Choose a basis $\{ \chi_{n_j k_j} \}_{j = 1}^r$ in this subspace. This choice is non-unique. Proposition~\ref{prop:complete} is valid for any choice of the basis. Thus, we have defined the vector sequence $\{ \chi_{nk} \}_{(n, k) \in J}$. Consider the sequence of vector functions
\begin{equation} \label{defX}
\mathcal X := \{ X_{nk} \}_{(n, k) \in J}, \quad
X_{nk}(x) := \left( \cos (\rho_{nk}x) T_1 + \frac{\sin (\rho_{nk}x)}{\rho_{nk}} T_1^{\perp} \right) \chi_{nk}.
\end{equation}

\begin{prop} \label{prop:complete}
The sequence $\mathcal X$ is complete in $L_2((0, \pi); \mathbb C^m)$.
\end{prop}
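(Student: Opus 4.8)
The plan is to show completeness of $\mathcal X$ in $L_2((0,\pi);\mathbb C^m)$ by the standard duality argument: assume $f \in L_2((0,\pi);\mathbb C^m)$ is orthogonal to every $X_{nk}$ and deduce $f = 0$. The orthogonality conditions $(X_{nk}, f) = 0$ for all $(n,k) \in J$ should be encoded in terms of a single analytic function. Specifically, I would introduce the vector function
$$
g(\la) := \int_0^{\pi} \Bigl( \cos(\rho x)\, T_1 + \frac{\sin(\rho x)}{\rho}\, T_1^{\perp} \Bigr)^{\dagger} f(x)\, dx \in \mathbb C^m,
$$
which is entire in $\la$ (the factor $\rho^{-1}\sin\rho x$ is entire in $\la$), and observe that $(X_{nk}, f) = \chi_{nk}^{\dagger} g(\la_{nk})$. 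Thus the hypothesis says $\chi_{nk}^{\dagger} g(\la_{nk}) = 0$ for all $(n,k)$. Since at a multiple eigenvalue $\la_{n_1 k_1} = \dots = \la_{n_r k_r}$ the vectors $\{\chi_{n_j k_j}\}_{j=1}^r$ form a basis of $\operatorname{Ran}\alpha_{n_1 k_1}$, which is exactly the $r$-dimensional space one expects the "relevant" component of $g$ to live in, this should force $g(\la_{nk})$ to be orthogonal to $\operatorname{Ran}\alpha_{nk}$, i.e. $\alpha_{nk}\, g(\la_{nk}) = 0$ for every $(n,k) \in J$.

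Next I would bring in the Weyl matrix $M(\la)$, whose residue at $\la_{nk}$ is $\alpha_{nk}$. Consider the scalar (or $\mathbb C^m$-valued) meromorphic function $h(\la) := M(\la)\, g(\la)$ — more precisely one should pair $g$ against $M$ on the correct side so that the only possible poles, the simple poles of $M$ at the $\la_{nk}$, are cancelled by the relations $\alpha_{nk}\, g(\la_{nk}) = 0$ together with the fact that $g$ is entire. Then $h$ is entire. Using the asymptotics of $\varphi(x,\la)$ from \eqref{asymptvv}, the known growth of $M(\la)$ off the spectrum, and the decay of $g(\la)$ coming from the Riemann--Lebesgue lemma (plus Paley--Wiener-type estimates on the strip, since $f \in L_2$ gives $g(\la) = o(1)$ as $|\rho| \to \infty$ in the relevant sectors and $|g(\la)| \le C\, \|f\|\, |\rho|^{-1} e^{|\operatorname{Im}\rho|\pi}$ type bounds), one shows $h(\la) \to 0$ along rays, so by Liouville's theorem $h \equiv 0$, hence $g \equiv 0$.

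Finally, from $g(\la) \equiv 0$ I would recover $f = 0$ by a completeness/uniqueness argument for the scalar system. Writing $g(\la) \equiv 0$ and separating the $\rho$-even and $\rho$-odd parts gives
$$
\int_0^{\pi} \cos(\rho x)\, T_1 f(x)\, dx \equiv 0, \qquad \int_0^{\pi} \sin(\rho x)\, T_1^{\perp} f(x)\, dx \equiv 0
$$
for all $\la$; since $\{\cos nx\}_{n \ge 0}$ and $\{\sin nx\}_{n \ge 1}$ (or their continuous analogues via the density of the entire-function values) are complete in $L_2(0,\pi)$, this yields $T_1 f \equiv 0$ and $T_1^{\perp} f \equiv 0$, hence $f \equiv 0$.

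The main obstacle is the second step: justifying that the constructed $h(\la)$ genuinely extends to an entire function of sufficiently controlled growth so that Liouville applies. This requires (i) the pole-cancellation bookkeeping at each $\la_{nk}$ to be done with the correct one-sided multiplication so that $\operatorname{Res}_{\la=\la_{nk}}(M(\la)g(\la))$ indeed vanishes — here the distinction between $\alpha_{nk}$ and the modified $\alpha'_{nk}$, and the role of the basis $\{\chi_{n_jk_j}\}$ at multiple eigenvalues, must be handled carefully; and (ii) sharp enough estimates on $M(\la)$ and $g(\la)$ in the $\rho$-plane, uniformly with respect to the possibly infinitely many clusters of (asymptotically) multiple eigenvalues described in Proposition~\ref{prop:asymptla}. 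These estimates should follow from \eqref{asymptvv}, \eqref{asymptla}, \eqref{asymptal} and the structure of $W^0(\rho)$, but assembling them into a clean bound on circles avoiding the spectrum is the technical heart of the argument.
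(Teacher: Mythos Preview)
The paper does not prove Proposition~\ref{prop:complete} from scratch: it simply observes that the completeness of $\mathcal X$ is equivalent to the completeness of the sequence $\mathcal Y$ defined in~\eqref{defY} (since for each group of equal eigenvalues the $X_{n_jk_j}$ and $Y_{n_jk_j}$ span the same finite-dimensional subspace), and then invokes \cite[Theorem~5.1]{Bond-dir} for the completeness of $\mathcal Y$. So your proposal is not a comparison target in the usual sense --- you are attempting a direct proof where the paper just cites.

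Your strategy (encode orthogonality in an entire $g(\la)$, kill the poles of $M(\la)g(\la)$ via $\al_{nk}g(\la_{nk})=0$, then force $g\equiv 0$) is a natural one, and the first step is correct: from $\chi_{nk}^{\dagger}g(\la_{nk})=0$ for a basis of $\Ran\al_{nk}$ and $\al_{nk}=\al_{nk}^{\dagger}$ one indeed gets $\al_{nk}g(\la_{nk})=0$, so $h(\la):=M(\la)g(\la)$ is entire. The genuine gap is the step you yourself flag as ``the main obstacle'': the Liouville argument does not go through as written. The vector $g(\la)$ is an entire function of exponential type $\pi$ in $\rho$ (Paley--Wiener), so $\|g(\la)\|\le C\exp(|\mathrm{Im}\,\rho|\pi)$, while the Weyl matrix $M(\la)$ is merely bounded away from its poles (it is of Herglotz type) and carries no compensating factor $\exp(-|\mathrm{Im}\,\rho|\pi)$. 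Hence $h(\la)$ is an entire function of exponential type $\pi$ in $\rho$, bounded (even $o(1)$) along the real $\rho$-axis, but this is far from enough for Liouville --- $\sin\rho\pi$ already shows that such functions need not vanish. A Phragm\'en--Lindel\"of argument does not rescue this without additional structure.

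What actually makes such arguments close (compare the proof of Theorem~\ref{thm:solve} in this paper, with the function $\mathscr B(x,\la)=\Gamma(x,\la)(\ga(x,\overline\la))^{\dagger}$) is to pair the $\tilde\vv$-based object against a $\tilde\Phi$-based object: the Weyl solution contributes a factor $\exp(-|\mathrm{Im}\,\rho|x)$ that cancels the growth, and one then uses contour integrals over circles $|\la|=(N+r)^2$ and the residue theorem rather than Liouville. Alternatively, the clean route --- and presumably close to what is done in \cite{Bond-dir} --- is to use self-adjointness: the genuine eigenfunctions $\vv(\cdot,\la_{nk})\chi_{nk}$ of $L$ are complete by the spectral theorem, and the transformation operator implicit in~\eqref{asymptvv} is a bounded, boundedly invertible Volterra operator on $L_2((0,\pi);\mathbb C^m)$ mapping $\tilde\vv(\cdot,\la)$ to $\vv(\cdot,\la)$, so completeness transfers to $\mathcal X$. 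Either of these fixes would close your argument; as it stands, the Liouville step fails.
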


Proposition~\ref{prop:complete} immediately follows from \cite[Theorem~5.1]{Bond-dir}, which asserts the completeness of the following sequence $\mathcal Y$.
Put
$$
T_{nk} := \left\{ \begin{array}{ll}
                T_1 + \rho_{nk} T_1^{\perp}, \quad & \rho_{nk} \ne 0, \\
                I, \quad & \rho_{nk} = 0,
          \end{array} \right.
\qquad          
B_{nk} := \frac{\pi}{2} T_{nk}^{-1} \al_{nk} T_{nk}^{-1}, \quad (n, k) \in J.
$$
Clearly, $\rank(B_{nk})$ equals the multiplicity of the eigenvalue $\la_{nk}$.

For any group of multiple eigenvalues $\la_{n_1 k_1} = \la_{n_2 k_2} = \dots = \la_{n_r k_r}$ considered above, choose an orthonormal basis $\{ \mathcal E_{n_j k_j} \}_{j = 1}^r$ in the $r$-dimensional subspace $\Ran B_{n_1 k_1}$. Thus, we have defined the vector sequence $\{ \mathcal E_{nk} \}_{(n, k) \in J}$. Define
\begin{equation} \label{defY}
\mathcal Y := \{ Y_{nk} \}_{(n, k) \in J}, \quad
Y_{nk}(x) := \left\{ \begin{array}{ll}
                (\cos (\rho_{nk}x) T_1 + \sin (\rho_{nk}x) T_1^{\perp}) \mathcal E_{nk}, \quad & \rho_{nk} \ne 0, \\
                (T_1 + x T_1^{\perp}) \mathcal E_{nk}, \quad & \rho_{nk} = 0.
            \end{array}\right.
\end{equation}
Clearly, the completeness of $\mathcal X$ is equivalent to the completeness of $\mathcal Y$ independently of the choice of the bases in the corresponding subspaces.

Now we turn to discuss the following inverse spectral problem.

\begin{ip} \label{ip:1}
Given the spectral data $\{ \la_{nk}, \al_{nk} \}_{(n, k) \in J}$, find $\sigma$, $T_1$, $T_2$, $H_2$.
\end{ip}

Along with the problem $L$, we consider the problem $\tilde L = L(\tilde \sigma, \tilde T_1, \tilde T_2, \tilde H_2)$ of the same form but with different coefficients. We agree that if a symbol $\ga$ denotes an object related to $L$, then the symbol $\tilde \ga$ with tilde denotes the similar object related to $\tilde L$. Note that the quasi-derivatives for these two problems are supposed to be different: $Y^{[1]} = Y' - \sigma Y$ for $L$ and $Y^{[1]} = Y' - \tilde \sigma Y$ for $\tilde L$. In \cite{Bond-dir}, the following uniqueness theorem has been obtained.

\begin{prop} \label{prop:uniq}
If $\la_{nk} = \tilde \la_{nk}$, $\al_{nk} = \tilde \al_{nk}$, $(n, k) \in J$, $J = \tilde J$, then 
\begin{equation} \label{transL}
\sigma(x) = \tilde \sigma(x) + H_1^{\diamond} \:\: \text{a.e. on} \:\: (0, \pi), \quad T_1 = \tilde T_1, \quad T_2 = \tilde T_2, \quad H_2 = \tilde H_2 - T_2 H_1^{\diamond} T_2,
\end{equation}
where 
\begin{equation} \label{H1perp}
    H_1^{\diamond} = (H_1^{\diamond})^{\dagger} = T_1^{\perp} H_1^{\diamond} T_1^{\perp}.
\end{equation}
Thus, the spectral data $\{ \la_{nk}, \al_{nk} \}_{(n, k) \in J}$ uniquely specify the problem $L$ up to a transform~\eqref{transL} given by an arbitrary matrix $H_1^{\diamond}$ satisfying~\eqref{H1perp}. Conversely, the transform~\eqref{transL} does not change the spectral data.
\end{prop}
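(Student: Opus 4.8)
The plan is to use the method of spectral mappings, drawing on the spectral asymptotics of Propositions~\ref{prop:asymptla}--\ref{prop:asymptal} and the completeness in Proposition~\ref{prop:complete}. I start with the \emph{converse} assertion, which is a direct computation: since $H_1^{\diamond}$ is a constant matrix, adding it to $\tilde\sigma$ changes neither $Q=\sigma'$ nor the differential expression, so $\ell Y=\tilde\ell Y$ on the common domain $\mathcal D(L)=\mathcal D(\tilde L)$; because $Y'-\sigma Y=(Y'-\tilde\sigma Y)-H_1^{\diamond}Y$ and $T_1H_1^{\diamond}=0$ (from \eqref{H1perp}), the form $V_1$ is invariant, and $V_2$, $\tilde V_2$ agree on every $Y$ with $T_2^{\perp}Y(\pi)=0$ (their difference is proportional to $T_2H_1^{\diamond}T_2^{\perp}Y(\pi)$), in particular on the Weyl solution $\Phi$, since $V_2(\Phi)=0$ forces $T_2^{\perp}\Phi(\pi,\la)=0$. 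Hence $\tilde\Phi=\Phi$, and the shift of the quasi-derivative at $x=0$ gives $\tilde M(\la)=M(\la)+H_1^{\diamond}$; a constant shift changes neither the poles $\la_{nk}$ nor the residues $\al_{nk}$, so the spectral data are preserved.

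For the \emph{direct} assertion, assume the spectral data coincide. First I recover $T_1$, $T_2$ from the asymptotics: summing \eqref{asymptal} over $k\in\mathcal J$ and using $\sum_kA_k=I$, $(T_1+nT_1^{\perp})^2=T_1+n^2T_1^{\perp}$ and $\{\|K_{nk}\|\}\in l_2$ gives $\tfrac{\pi}{2}\sum_{k\in\mathcal J}\al_n^{(k)}=T_1+n^2T_1^{\perp}+o(n^2)$, which determines $T_1$; then $\{r_k\}$ (from \eqref{asymptla}) and $\{A_k\}$ (from \eqref{asymptal} after conjugating by $(T_1+nT_1^{\perp})^{-1}$), together with $T_1$, determine $T_2$ via \eqref{defW0} and the formula for $A_k$, as in \cite{Bond-dir}. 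Thus $T_1=\tilde T_1$, $T_2=\tilde T_2$. Next, $M(\la)$ and $\tilde M(\la)$ are meromorphic with the same poles $\la_{nk}$ and residues $\al_{nk}$, so $\hat M:=M-\tilde M$ is entire; since the leading asymptotics of $M$ depend only on $T_1$, $\hat M$ is bounded on $\mathbb C$, hence $\hat M\equiv H_1^{\diamond}$ for a constant matrix by Liouville's theorem. Self-adjointness makes $H_1^{\diamond}$ Hermitian, and $T_1M(\la)\to0$ as $|\la|\to\infty$ off the spectrum (a structural property of the Weyl matrix established in \cite{Bond-dir}) forces $T_1H_1^{\diamond}=0$; with Hermitian symmetry this gives \eqref{H1perp}. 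By the converse part, replacing $\tilde\sigma$, $\tilde H_2$ by $\tilde\sigma+H_1^{\diamond}$, $\tilde H_2-T_2H_1^{\diamond}T_2$ preserves the spectral data and makes $\tilde M=M$, so it remains to prove that $M\equiv\tilde M$ (together with $T_1=\tilde T_1$, $T_2=\tilde T_2$) implies $\sigma=\tilde\sigma$ a.e. and $H_2=\tilde H_2$, whence \eqref{transL}.

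To that end I introduce the solution $\psi(x,\la)$ of \eqref{eqv} with $\psi(0,\la)=-T_1^{\perp}$, $\psi^{[1]}(0,\la)=T_1$, so that $\Phi=\psi+\varphi M$ and $\tilde\Phi=\tilde\psi+\tilde\varphi M$ with the \emph{same} $M$, and I form, for each $(x,\la)$,
\[
P(x,\la):=\begin{pmatrix}\varphi&\Phi\\\varphi^{[1]}&\Phi^{[1]}\end{pmatrix}\begin{pmatrix}\tilde\varphi&\tilde\Phi\\\tilde\varphi^{[1]}&\tilde\Phi^{[1]}\end{pmatrix}^{-1}=\begin{pmatrix}\varphi&\psi\\\varphi^{[1]}&\psi^{[1]}\end{pmatrix}\begin{pmatrix}\tilde\varphi&\tilde\psi\\\tilde\varphi^{[1]}&\tilde\psi^{[1]}\end{pmatrix}^{-1},
\]
the second equality holding because both $(2m\times2m)$ matrices in the first product factor as $\left(\begin{smallmatrix}\varphi&\psi\\\varphi^{[1]}&\psi^{[1]}\end{smallmatrix}\right)\left(\begin{smallmatrix}I&M\\0&I\end{smallmatrix}\right)$ (resp. with tildes, same $M$), so the triangular factors cancel. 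The right-hand side is \emph{entire} in $\la$, since its building blocks are entire and the determinant of $\left(\begin{smallmatrix}\tilde\varphi&\tilde\psi\\\tilde\varphi^{[1]}&\tilde\psi^{[1]}\end{smallmatrix}\right)$ is a nonzero constant — no residue cancellation is required. By \eqref{asymptvv} and the analogous expansions for $\psi$, $\varphi^{[1]}$, $\psi^{[1]}$, whose main parts depend only on $T_1$, the two matrices agree asymptotically, so $P(x,\la)\to I$ uniformly in $x\in[0,\pi]$ as $|\la|\to\infty$ away from small discs about the $\la_{nk}$; being entire and bounded, $P\equiv I$. Hence $\varphi\equiv\tilde\varphi$, $\varphi^{[1]}\equiv\tilde\varphi^{[1]}$, $\psi\equiv\tilde\psi$, $\psi^{[1]}\equiv\tilde\psi^{[1]}$, $\Phi\equiv\tilde\Phi$. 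Comparing quasi-derivatives gives $(\sigma-\tilde\sigma)(x)\big[\varphi(x,\la)\mid\psi(x,\la)\big]\equiv0$, and since $\big[\varphi(x,\la)\mid\psi(x,\la)\big]$ — the top half of an invertible $(2m\times2m)$ matrix — has rank $m$ for each $x\in(0,\pi)$, it follows that $\sigma=\tilde\sigma$ a.e.; finally $\sigma=\tilde\sigma$ and $\Phi=\tilde\Phi$ together with $V_2(\Phi)=\tilde V_2(\tilde\Phi)=0$ give $T_2(H_2-\tilde H_2)\Phi(\pi,\la)\equiv0$, and since $\{\Phi(\pi,\la)\}_{\la}$ spans $\Ran T_2$ we conclude $H_2=\tilde H_2$.

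The step I expect to be the main obstacle is the uniform limit $P(x,\la)\to I$ off neighbourhoods of the (possibly infinitely clustering) eigenvalues: it rests on sharp asymptotics of $\varphi$ and $\psi$ throughout the $\la$-plane, on the $l_2$-bounds in \eqref{asymptla}--\eqref{asymptal}, and on a Phragm\'en--Lindel\"of-type argument; closely tied to it, and likewise considerably harder than for regular potentials, is the control of the Weyl matrix $M(\la)$ needed above — its boundedness modulo an additive constant and the structural limit $T_1M(\la)\to0$ — which relies on the delicate spectral analysis of \cite{Bond-dir}.
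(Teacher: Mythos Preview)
The paper does not prove Proposition~\ref{prop:uniq} at all: it is imported verbatim from \cite{Bond-dir} (``In \cite{Bond-dir}, the following uniqueness theorem has been obtained''), so there is no in-paper proof to compare against. Your argument is precisely the standard method-of-spectral-mappings proof one would expect to find in \cite{Bond-dir}: recover $T_1,T_2$ from the asymptotics~\eqref{asymptla}--\eqref{asymptal}, show $M-\tilde M$ is an entire bounded (hence constant) matrix of the form~\eqref{H1perp}, normalize to $M\equiv\tilde M$, form the block transformation matrix $P(x,\la)$, prove $P\equiv I$ by entireness plus the asymptotic limit $P\to I$, and read off $\sigma=\tilde\sigma$, $H_2=\tilde H_2$. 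The converse computation is also correct (up to an immaterial sign in the relation $\tilde M=M\pm H_1^{\diamond}$; the point is only that the difference is constant).

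Two places deserve a line of justification if you write this up in full. First, the step ``$\{\Phi(\pi,\la)\}_\la$ spans $\Ran T_2$'' is true but not automatic; you can get it from the large-$|\la|$ asymptotics of $\Phi(\pi,\la)$, or bypass it entirely by arguing with $\vv(\pi,\la)$ instead, using that $[\vv(\pi,\la)\mid\psi(\pi,\la)]$ has full row rank. Second, you correctly flag that both the boundedness of $M-\tilde M$ and the uniform limit $P(x,\la)\to I$ off the spectrum are the genuinely nontrivial analytic inputs here; in the singular-potential setting these rest on the transformation-operator representation~\eqref{asymptvv} and the corresponding estimates for $\vv^{[1]}$, $\psi$, $\psi^{[1]}$, exactly the material developed in \cite{Bond-dir}. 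With those two points tightened, your outline is a complete and correct proof.
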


The main result of this paper is the following theorem, which provides the characterization of the spectral data $\{ \la_{nk}, \al_{nk} \}_{(n, k) \in J}$ of the problem $L$.

\begin{thm} \label{thm:nsc}
Let $T_1, T_2 \in \mathbb C^{m \times m}$ be arbitrary fixed orthogonal projection matrices. Then, for a collection $\{ \la_{nk}, \al_{nk} \}_{(n, k) \in J}$ to be the spectral data of a problem $L = L(\sigma, T_1, T_2, H_2)$ in the form~\eqref{eqv}-\eqref{bc}, the following conditions are necessary and sufficient:

(i) $\la_{nk} \in \mathbb R$, $\al_{nk} \in \mathbb C^{m \times m}$, $\al_{nk} = \al_{nk}^{\dagger} \ge 0$, $\rank (\al_{nk})$ is equal to the multiplicity of the corresponding value $\la_{nk}$ (i.e., to the number of times $\la_{nk}$ occurs in the sequence), for all $(n, k) \in J$, and $\al_{nk} = \al_{ls}$ if $\la_{nk} = \la_{ls}$.

(ii) The asymptotic relations~\eqref{asymptla} and~\eqref{asymptal} hold, where $\{ r_k \}_{k = 1}^m$ and $\{ A_k \}_{k \in \mathcal J}$ are defined as in Propositions~\ref{prop:asymptla} and~\ref{prop:asymptal}, respectively, by using the fixed $T_1$ and $T_2$.

(iii) The sequence $\mathcal X$ defined by~\eqref{defX} is complete in $L_2((0, \pi); \mathbb C^m)$.
\end{thm}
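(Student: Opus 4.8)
The plan is to separate necessity from sufficiency. Necessity is immediate: if $\{\la_{nk},\al_{nk}\}_{(n,k)\in J}$ is the spectral data of some $L=L(\sigma,T_1,T_2,H_2)$ in the form~\eqref{eqv}--\eqref{bc}, then (i) collects the self-adjointness, rank and coincidence properties, (ii) is the content of Propositions~\ref{prop:asymptla} and~\ref{prop:asymptal}, and (iii) is Proposition~\ref{prop:complete}. So the whole work lies in the sufficiency, which I would establish by the method of spectral mappings, along the lines outlined in the Introduction.

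First I would fix a model problem $\tilde L = L(0,T_1,T_2,0)$ with the same projections and zero potential; one checks directly that its index set is $\tilde J = J$ (both depend only on $T_1,T_2$ through $p^{\perp}$), that its eigenvalues satisfy~\eqref{asymptla} with the same $\{r_k\}$ and $\tilde\varkappa_{nk}\equiv 0$, and that its weight matrices satisfy~\eqref{asymptal} with the same $\{A_k\}$. Hence, by hypothesis (ii), after suitable rescaling the sequences $\{\la_{nk}-\tilde\la_{nk}\}$ and $\{\al_{nk}-\tilde\al_{nk}\}$ are small in the $\ell_2$ sense. I would then relate $\vv(x,\la)$ to $\tilde\vv(x,\la)$ by contour integration of the difference of their Weyl-type expansions over expanding circles. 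Since the eigenvalues may cluster, with possibly infinitely many multiple or asymptotically multiple ones, the indices $(n,k)$ must be organized into finite blocks according to the asymptotics $n+r_k$: on each block one lumps the finitely many $\rho_{nk}$ together with a common denominator and takes as unknown the matrix built from $\vv(x,\la_{nk})$ paired with the basis vectors $\chi_{nk}$ of $\Ran\al_{nk}$. This produces the \emph{main equation}
\[
(I+\tilde R(x))\,\psi(x) = \tilde\psi(x),
\]
a linear equation for the block-sequence $\psi(x)$ in a suitable Banach space of matrix sequences with an $\ell_2$-type norm, where $\tilde\psi(x)$ is built from $\tilde\vv$ alone and the operator $\tilde R(x)$ is assembled from $\tilde\vv$ together with both spectral data sets, its kernel mixing the generating functions of $\tilde L$ with the differences $\la_{nk}-\tilde\la_{nk}$ and $\al_{nk}-\tilde\al_{nk}$.

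The crux is the unique solvability of the main equation for every $x\in[0,\pi]$. Since the potential is singular, one cannot split $\tilde R(x)$ as ``compact plus contraction'' in the naive way; one needs instead the sharp operator-norm estimates promised as Lemmas~\ref{lem:Rl2}--\ref{lem:Rl1}. Using them I would decompose $\tilde R(x)=\tilde R^{(N)}(x)+\tilde R_N(x)$, where the ``block-tail'' part $\tilde R_N(x)$ (indices above $N$) has operator norm $<1$ uniformly in $x$ once $N$ is large enough, thanks to the $\ell_2$ asymptotics from (ii), and $\tilde R^{(N)}(x)$ is finite-rank, hence compact; consequently $I+\tilde R(x)$ is Fredholm of index zero and its invertibility reduces to triviality of the kernel. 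Triviality of $\Ker(I+\tilde R(x))$ is exactly where condition (iii) enters: a nonzero kernel element translates, via the standard correspondence between block-sequences and functions, into a nonzero element of $L_2((0,\pi);\mathbb C^m)$ orthogonal to every $X_{nk}$, contradicting the completeness of $\mathcal X$. I expect this step --- making the singular-potential estimates precise enough that the Fredholm argument runs uniformly in $x$ --- to be the principal obstacle.

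Finally, from the solution $\psi(x)$ I would recover the potential by an explicit formula of the schematic form $\sigma(x)=\tilde\sigma(x)+\varepsilon_0(x)$, with $\varepsilon_0\in L_2((0,\pi);\mathbb C^{m\times m})$ given by a series in $\psi$, $\tilde\psi$ and the spectral data, and then read $H_2$ off the behavior of the reconstructed $\vv$ at $x=\pi$; together with the prescribed $T_1,T_2$ this yields a candidate $L$. It remains to verify that $L$ indeed has the given spectral data. Since $\sigma$ was built abstractly, it cannot be substituted into~\eqref{eqv} directly, so I would approximate it by smooth $\sigma^N\to\sigma$ in $L_2((0,\pi);\mathbb C^{m\times m})$, invoke the known direct-and-inverse correspondence for smooth coefficients, and pass to the limit, using the stability in $N$ of the main equation and of the reconstruction formulas (again controlled by the estimates of Section~4). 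This shows that the spectral data of $L$ coincide with $\{\la_{nk},\al_{nk}\}_{(n,k)\in J}$ up to the harmless transform~\eqref{transL}, which is then normalized away; assembling the construction steps gives the reconstruction Algorithm~\ref{alg:1}.
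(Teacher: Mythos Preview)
Your plan matches the paper's architecture: necessity from Propositions~\ref{prop:asymptla}--\ref{prop:complete}, main equation via contour integration and block grouping, Fredholm alternative with kernel triviality coming from completeness of $\mathcal X$, reconstruction of $\sigma$ and $H_2$ from the solution, and an approximation step to verify the spectral data. Two points, however, are not merely imprecise but would make the argument fail as written.

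First, the Banach space is not $\ell_2$-type. The paper's $\mathfrak B$ carries the sup norm $\|f\|_{\mathfrak B}=\sup_n\|f_n\|_{B(G_n)}$, and the block norm~\eqref{normBG} includes the \emph{difference quotients} $|\rho-\theta|^{-1}\|f(\rho)-f(\theta)\|$ over each group $G_n$. This Lipschitz component is what makes the summation rule~\eqref{group} extract the factor $\xi_k$ in~\eqref{estRkn}, which in turn gives operator-norm approximation of $\tilde R(x)$ by finite-rank truncations (Theorem~\ref{thm:RB}) and hence the Fredholm property. With a plain $\ell_2$ sequence norm the estimates of Section~4 do not yield compactness. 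Second, the approximation $\sigma^N$ is \emph{not} a smoothing of the already-built $\sigma$; it comes from truncating the \emph{data}: put $\la_{nk}^N=\la_{nk}$, $\al_{nk}^N=\al_{nk}$ for $n\le N$ and equal to $\tilde\la_{nk},\tilde\al_{nk}$ for $n>N$ (see~\eqref{deflaN}). For this truncated data the main equation has finitely many nontrivial terms, so $\phi^N\in C^2$ and the finite sum~\eqref{defsiN} gives a smooth $\sigma^N$; one can then \emph{compute directly} (Lemma~\ref{lem:rel} and~\eqref{MN}) that the Weyl matrix of $L(\sigma^N,T_1,T_2,H_2^N)$ has exactly the poles $\{\la_{nk}^N\}$ with residues $\{\al_{nk}^N\}$. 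Only after this identification do $\sigma^N\to\sigma$ (Lemma~\ref{lem:lemN}) and the stability Lemma~\ref{lem:stab} close the loop. An arbitrary smooth approximation of $\sigma$ gives no handle on the spectral data of $L^N$ and the ``known direct-and-inverse correspondence for smooth coefficients'' you invoke is precisely what is being proved.
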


In Theorem~\ref{thm:nsc}, the index set $J$ is defined by the fixed matrices $T_1$ and $T_2$ via~\eqref{defJ}.
We suppose that the matrices $T_1$ and $T_2$ are initially given, but this is done only for convenience of formulation.
In fact, $T_1$ and $T_2$ can be uniquely recovered from the spectral data $\{ \la_{nk}, \al_{nk} \}_{(n, k) \in J}$ by \cite[Algorithm~6.7]{Bond-dir}. Note that, in condition (iii), the sequence $\mathcal X$ depends on the choice of $\{ \chi_{nk} \}$. Obviously, if condition (iii) holds for some choice of $\{ \chi_{nk} \}$, then it holds for any possible choice of $\{ \chi_{nk} \}$.

The necessity part of Theorem~\ref{thm:nsc} readily follows from Propositions~\ref{prop:asymptla}-\ref{prop:complete}. Therefore, our goal is to prove the sufficiency part. For this purpose, we need one more proposition proved in \cite{Bond-dir}.

\begin{prop} \label{prop:Riesz}
Suppose that the data $\{ \la_{nk}, \al_{nk} \}_{(n, k) \in J}$ satisfy the conditions (i)-(iii) of Theorem~\ref{thm:nsc}. Then the sequence $\mathcal Y$ constructed by~\eqref{defY} is a Riesz basis in $L_2((0, \pi); \mathbb C^{m \times m})$.
\end{prop}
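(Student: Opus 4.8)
\medskip

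\noindent\emph{Proof sketch.} The plan is to realize $\mathcal Y$ as an $l_2$-perturbation of an explicitly constructed model Riesz basis $\mathcal Y^0 = \{ Y_{nk}^0 \}_{(n,k) \in J}$, and then to invoke the Bari-type stability principle: a sequence that is complete in a Hilbert space and satisfies $\sum_{(n,k) \in J} \| Y_{nk} - Y_{nk}^0 \|^2 < \iy$, with $\{ Y_{nk}^0 \}$ a Riesz basis, is itself a Riesz basis. Let me recall the proof of this principle, since it also shows where condition (iii) enters. Fix an orthonormal basis $\{ e_{nk} \}_{(n,k) \in J}$ of the underlying $L_2$-space and put $U e_{nk} := Y_{nk}^0$, $V e_{nk} := Y_{nk} - Y_{nk}^0$. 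Then $U$ extends to a bounded, boundedly invertible operator because $\mathcal Y^0$ is a Riesz basis, and $V$ is Hilbert--Schmidt (hence compact) because $\sum_{(n,k)} \| V e_{nk} \|^2 < \iy$. Consequently $U + V = U(I + U^{-1}V)$ is Fredholm of index zero, so $\mathcal Y = \{ (U+V)e_{nk} \}$ is complete if and only if $\Ran(U+V)$ is dense, which for an index-zero Fredholm operator forces $U+V$ to be bijective; then $\mathcal Y$, being the image of an orthonormal basis under a bounded invertible operator, is a Riesz basis. Completeness of $\mathcal Y$ is exactly condition (iii), since the completeness of $\mathcal X$ and of $\mathcal Y$ are equivalent (as noted after~\eqref{defY}).

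It remains to (a) construct $\mathcal Y^0$ and (b) establish the quadratic closeness. For (a) I would use the orthogonal projections $\{ A_k \}_{k \in \mathcal J}$ of Proposition~\ref{prop:asymptal}. Since $A_k A_s = \de_{ks} A_k$ and $\sum_{k \in \mathcal J} A_k = I$, for each $n$ one may choose an orthonormal basis $\{ \mathcal E_{ns}^0 \}_{s = \overline{1,m}}$ of $\mathbb C^m$ adapted to this decomposition, i.e.\ with $\{ \mathcal E_{ns}^0 : s \in J_k \}$ an orthonormal basis of $\Ran A_k$ for each $k \in \mathcal J$. Setting $\rho_{nk}^0 := n + r_k$, put
$$
Y_{nk}^0(x) := \bigl( \cos(\rho_{nk}^0 x)\, T_1 + \sin(\rho_{nk}^0 x)\, T_1^{\perp} \bigr)\mathcal E_{nk}^0, \qquad \rho_{nk}^0 \ne 0,
$$
with the evident modification for the finitely many indices with $\rho_{0k}^0 = 0$. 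That $\mathcal Y^0$ is a Riesz basis is the $\sigma \equiv 0$ model statement underlying the proposition; it is available from~\cite{Bond-dir} and can be approached through Riesz-basis criteria for vector exponential systems with exponents $\{ \pm(n + r_k) \}$ (here $0 \le r_1 \le \dots \le r_m < 1$).

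For (b) I would combine two $l_2$-estimates. First, by~\eqref{asymptla}, $\rho_{nk} - \rho_{nk}^0 = \varkappa_{nk}$ with $\{ \varkappa_{nk} \} \in l_2$, and elementary bounds give $\| \cos(\rho_{nk}x) - \cos(\rho_{nk}^0 x) \|_{L_2(0,\pi)} + \| \sin(\rho_{nk}x) - \sin(\rho_{nk}^0 x) \|_{L_2(0,\pi)} \le C|\varkappa_{nk}|$, so replacing $\rho_{nk}$ by $\rho_{nk}^0$ in the trigonometric factors costs an $l_2$-error only. Second, from the weight-matrix asymptotics~\eqref{asymptal} one must show that, after a suitable ordering of the indices and, within each group of equal or asymptotically multiple eigenvalues, a suitable unitary adjustment of the chosen bases, $\sum_{(n,k) \in J} \| \mathcal E_{nk} - \mathcal E_{nk}^0 \|^2 < \iy$. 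The crucial point is that~\eqref{asymptal} says precisely that, after the normalization built into the definitions of $B_{nk}$ and $T_{nk}$, the combined subspace $\bigoplus_{s \in J_k} \Ran B_{ns}$ is an $l_2$-perturbation of the fixed subspace $\Ran A_k$; from this the individual orthonormal vectors $\mathcal E_{ns}$ can be extracted by an orthonormalization that is stable in the $l_2$-sense. Together with $\| \mathcal E_{nk} \| = \| \mathcal E_{nk}^0 \| = 1$, these two estimates give $\sum_{(n,k) \in J} \| Y_{nk} - Y_{nk}^0 \|^2 < \iy$, which closes the argument.

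The hard part will be step (b): the passage from the good asymptotics of the \emph{summed} weight matrices $\al_n^{(k)}$ to the $l_2$-closeness of the \emph{individual} unit vectors $\mathcal E_{nk}$ to the model vectors $\mathcal E_{nk}^0$. Inside a group of eigenvalues that are distinct for each $n$ but collapse into one another as $n \to \iy$, the one-dimensional ranges $\Ran B_{ns}$ need not approach any fixed subspace --- only their direct sum is controlled --- so one must exploit the Hermitian non-negativity of $\al_{nk}$ together with the identities $A_k A_s = \de_{ks} A_k$, $\sum_{k \in \mathcal J} A_k = I$ in order to match suitable bases up to an $l_2$-error. A secondary difficulty is the re-indexing: the non-decreasing ordering of the $\rho_{nk}$ need not respect the partition of $\{ 1, \dots, m \}$ into the blocks $J_k$, so the index correspondence between $\mathcal Y$ and $\mathcal Y^0$ must be arranged with care before the sums above become meaningful.
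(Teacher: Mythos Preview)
The paper does not actually prove Proposition~\ref{prop:Riesz}: it is introduced with the phrase ``one more proposition proved in \cite{Bond-dir}'' and is then used as a black box. So there is no in-paper argument to compare yours against.

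Your outline is the standard one and is almost certainly the route taken in the cited reference: realize $\mathcal Y$ as a quadratically close perturbation of a model system $\mathcal Y^0$ built from the projectors $A_k$ and the frequencies $n+r_k$, then invoke the Bari stability theorem, with completeness supplied by condition~(iii). Two remarks strengthen your sketch. First, it is worth stating explicitly (you implicitly use it) that it suffices to prove Riesz-basicity for \emph{one} admissible choice of the orthonormal bases $\{\mathcal E_{nk}\}$: any two choices differ by a block-unitary transformation with blocks of size at most $m$, which is a bounded invertible operator on $L_2((0,\pi);\mathbb C^m)$ and hence preserves Riesz-basicity. Second, the fact that $\mathcal Y^0$ itself is a Riesz basis is not entirely trivial and deserves its own argument; it reduces to the Riesz-basicity of the scalar systems $\{\cos(n+r)x\}_{n\ge 0}$ and $\{\sin(n+r)x\}_{n\ge 1}$ for each $r\in[0,1)$, together with the orthogonal decomposition $\mathbb C^m=\bigoplus_{k\in\mathcal J}\Ran A_k$ and the further splitting of each $\Ran A_k$ by $T_1$ and $T_1^{\perp}$ (the projectors $A_k$ commute with $T_1$, as one reads off from~\eqref{defW0}).

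The step you flag as the crux --- passing from the asymptotics of the \emph{summed} matrices $\al_n^{(k)}$ in~\eqref{asymptal} to $l_2$-closeness of \emph{individual} unit vectors $\mathcal E_{nk}$ and $\mathcal E_{nk}^0$ --- is indeed the substantive point, and your sketch does not carry it out. The missing lemma is this: from~\eqref{asymptal}, condition~(i) (in particular $\al_{nk}\ge 0$ and the rank condition), and $\sum_k A_k=I$, one shows that the orthogonal projection $P_n^{(k)}$ onto $\bigoplus_{s\in J_k}\Ran B'_{ns}$ satisfies $\{\|P_n^{(k)}-A_k\|\}_n\in l_2$; then, since two orthogonal projections of equal rank that are norm-close admit orthonormal bases of their ranges that are correspondingly close (e.g.\ via the partial isometry in the polar decomposition of $A_k P_n^{(k)}$), one can choose $\{\mathcal E_{ns}\}_{s\in J_k}$ with $\sum_n\sum_{s\in J_k}\|\mathcal E_{ns}-\mathcal E_{ns}^0\|^2<\iy$. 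With this lemma in hand your argument closes; without it, the sketch remains a correct plan rather than a proof.
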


The proof of Theorem~\ref{thm:nsc} is based on several auxiliary theorems and lemmas provided in Sections~3-6. This proof is constructive and yields Algorithm~\ref{alg:1} for solving Inverse Problem~\ref{ip:1}.

\section{Main equation}

The goal of this section is to reduce the nonlinear Inverse Problem~\ref{ip:1} to the linear \textit{main equation} in a special Banach space. For construction of this Banach space, we group the eigenvalues with respect to their asymptotics~\eqref{asymptla}. In the next sections, the main equation is used for the proof of Theorem~\ref{thm:nsc} and for constructive solution of the inverse problem. 

Consider the problem $L = L(\sigma, T_1, T_2, H_2)$ with the spectral data $\{ \la_{nk}, \al_{nk} \}_{(n, k) \in J}$. Without loss of generality, we may assume that $\la_{nk} \ge 0$ and $\rho_{nk} = \sqrt{\la_{nk}} \ge 0$, $(n, k) \in J$. One can easily achieve this condition by a shift:
$$
\sigma(x) := \sigma(x) + c x I, \quad H_2 := H_2 - c \pi T_2, \quad \la_{nk} := \la_{nk} + c, \quad c > 0.
$$

Fix the model problem $\tilde L := L(0, T_1, T_2, 0)$. We have
\begin{gather} \label{tvv}
\tilde \vv(x, \la) = \cos \rho x \, T_1 + \frac{\sin \rho x}{\rho} T_1^{\perp}, \quad  
\tilde \rho_{nk} = n + r_k, \\ \label{tal}
\tilde \al_{nk} = \begin{cases}
                    \frac{2}{\pi} (T_1 + \tilde \rho_{nk} T_1) A_k (T_1 + \tilde \rho_{nk} T_1), \quad \tilde \rho_{nk} \ne 0, \\
                    \frac{1}{\pi} T_1 A_k T_1, \quad \tilde \rho_{nk} = 0.
                  \end{cases}
\end{gather}

Denote $\langle Z, Y \rangle := Z Y^{[1]} - Z^{[1]} Y$.
Introduce the notations
\begin{gather} \label{defD}
\tilde D(x, \la, \mu) := \frac{\langle \tilde \vv(x, \la), \tilde \vv(x, \mu)\rangle}{\la - \mu} = \int_0^x \tilde \vv(t, \la) \tilde \vv(x, \mu) \, d\mu, \\ \nonumber
\la_{nk0} := \la_{nk}, \quad \la_{nk1} := \tilde \la_{nk}, \quad \rho_{nk0} := \rho_{nk}, \quad \rho_{nk1} := \tilde \rho_{nk}, \\ \nonumber
\al_{nk0} := \al_{nk}, \quad \al_{nk1} := \tilde \al_{nk}, \quad
\al'_{nk0} := \al'_{nk}, \quad \al'_{nk1} := \tilde \al'_{nk}.
\end{gather}

Using the contour integration in the $\la$-plane (see \cite{Bond-scal}), we prove the following lemma.

\begin{lem} \label{lem:cont}
The following relation holds
\begin{equation} \label{cont}
    \tilde \vv(x, \la_{nki}) = \vv(x, \la_{nki}) + \sum_{(l, s) \in J} (\vv(x, \la_{ls0}) \al'_{ls0} \tilde D(x, \la_{ls0}, \la_{nki}) - \vv(x, \la_{ls1}) \al'_{ls1} \tilde D(x, \la_{ls1}, \la_{nki})),
\end{equation}
for $(n, k) \in J$, $i = 0, 1$. 
The series converges in the sense $\lim\limits_{N \to \iy} \sum\limits_{l \le N} (\dots)$ absolutely and uniformly by $x \in [0, \pi]$.
\end{lem}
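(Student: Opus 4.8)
The plan is to derive \eqref{cont} by a contour-integration argument in the $\la$-plane, following the scalar case in \cite{Bond-scal}. The starting point is the standard bilinear identity for solutions of the matrix Sturm-Liouville equation: for two spectral parameters $\la$ and $\mu$, integrating $-(Y^{[1]})' - \sigma Y^{[1]} - \sigma^2 Y = \la Y$ against another solution and subtracting gives
\[
\langle \tilde \vv(x, \la), \vv(x, \mu) \rangle \Big|_0^x = (\la - \mu) \int_0^x \tilde\vv(t,\la)\,\vv(t,\mu)\,dt,
\]
and similarly $\tilde D(x,\la,\mu)$ is expressed by the analogous identity with $\tilde\vv$ on both sides. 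Using the initial conditions $\vv(0,\la) = \tilde\vv(0,\la) = T_1$, $\vv^{[1]}(0,\la) = \tilde\vv^{[1]}(0,\la) = T_1^\perp$, together with the projection identities $T_1^2 = T_1$, $T_1 T_1^\perp = 0$, the boundary term at $x = 0$ collapses to a fixed matrix. The key object is then the \emph{transition matrix} (or Cauchy-data connection) relating $\vv$ and $\tilde\vv$; one writes $\tilde\vv(x,\la)$ in terms of $\vv(x,\la)$ and $\vv^{[1]}(x,\la)$ via the matrix $P(x,\la)$ built from the Weyl solutions, whose poles in $\la$ are exactly the $\la_{nk}$ and whose residues involve the weight matrices $\al_{nk}$.

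The core of the proof is then a residue computation. Fix $x$ and consider, for the contour $\Gamma_N$ being a large circle (or rectangle) in the $\la$-plane enclosing the eigenvalues $\la_{ls}$ with $l \le N$ (and correspondingly $\tilde\la_{ls}$), the contour integral
\[
\frac{1}{2\pi i} \oint_{\Gamma_N} \frac{\vv(x,\mu)\, M(\mu)\, \tilde D(x,\mu,\la_{nki})}{\,}\, d\mu
\]
of an appropriate meromorphic matrix function of $\mu$ (the precise integrand is the one whose residues at $\mu = \la_{ls0}$ produce the term $\vv(x,\la_{ls0})\al'_{ls0}\tilde D(x,\la_{ls0},\la_{nki})$ and whose residues at $\mu = \tilde\la_{ls}$ produce the $\tilde\vv$-term, while its residue at $\mu = \la_{nki}$ yields $\tilde\vv(x,\la_{nki}) - \vv(x,\la_{nki})$). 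Because $M(\mu)$ has only simple poles at the $\la_{nk}$ with residues $\al_{nk}$, and because multiple eigenvalues are handled by passing from $\al_{nk}$ to $\al'_{nk}$ (the convention that concentrates the residue on the first index in a coincidence group, set up right before the statement), summing the residues inside $\Gamma_N$ gives exactly the truncated series in \eqref{cont} plus the left-hand and the $\vv(x,\la_{nki})$ terms. It then remains to show the contour integral over $\Gamma_N$ tends to zero as $N \to \infty$: this uses the representation \eqref{asymptvv} for $\vv$ and $\vv^{[1]}$, the analogous one for $\tilde\vv$, the asymptotics \eqref{asymptla} of $\rho_{nk}$ to choose the radii $|\mu| \sim (N + \text{const})^2$ so that $\Gamma_N$ stays uniformly away from the spectrum, and the standard bound $\|M(\la)\| = O(1)$ away from the poles, giving an $O(N^{-1})$ decay of the integrand against the $O(1)$-length of $\Gamma_N$ after the $d\mu$-measure is accounted for.

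Convergence of the resulting series in the asserted mode — $\lim_{N\to\infty}\sum_{l \le N}$, absolutely and uniformly in $x \in [0,\pi]$ — I would extract from the same estimates: the general term is controlled by $\|\al'_{ls0}\| \cdot \|\tilde D(x,\la_{ls0},\la_{nki})\|$ plus the tilde-analogue, and the difference structure (the $\vv$-term minus the $\tilde\vv$-term, paired within the same block $(l,s)$) produces an extra factor that is $l_2$ thanks to \eqref{asymptla} and \eqref{asymptal} (the $\varkappa_{nk} \in l_2$ correction and the $l_2$-remainders $K_{nk}$ in the weight-matrix asymptotics). One must be slightly careful that $\tilde D(x,\la,\mu)$, as the integral $\int_0^x \tilde\vv(t,\la)\tilde\vv(t,\mu)\,dt$ of bounded-in-$x$ trigonometric-type matrices, is itself $O(1)$ uniformly, and its difference across a block is $O(|\varkappa_{ls}|)$, which closes the estimate.

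The main obstacle I anticipate is twofold. First, the bookkeeping around multiple and asymptotically multiple eigenvalues: the residue at a confluent group must be organized so that it reproduces the single term $\vv(x,\la_{n_1k_1})\al'_{n_1k_1}\tilde D(\cdots)$ rather than a spurious derivative-of-$\tilde D$ contribution, which is exactly why $\al'_{nk}$ (residue concentrated on the least index, zero on the others) was introduced — one has to verify that the Laurent expansion of $M(\mu)$ at such a point is still a \emph{simple} pole with residue the full $\al_{n_1 k_1}$, so no Jordan-block complications arise. Second, the contour estimate: since the potential is singular, $\vv$ and $\vv^{[1]}$ are controlled only through the $L_2$-kernel representation \eqref{asymptvv}, so the bound on the integrand over $\Gamma_N$ is not pointwise-trivial and requires the uniform-in-$x$ $L_2$-bound on $\mathscr K(x,\cdot)$ together with a Riemann–Lebesgue-type decay of $K_x(\rho)$ along the contour; this is the step that "is not needed in the case of a regular potential," matching the remark in the introduction, and it is where the precise estimates of Section~4 are really being invoked in spirit.
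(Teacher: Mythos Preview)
Your proposal is correct and takes essentially the same approach as the paper: the paper's entire proof of this lemma is the single sentence ``Using the contour integration in the $\la$-plane (see \cite{Bond-scal}), we prove the following lemma,'' and your outline is precisely a fleshing-out of that reference --- transition matrix $P(x,\la)$ built from the Weyl solutions, contour integral over circles $|\mu| = (N+r)^2$ avoiding the spectrum, residue calculus producing the $\al'_{lsj}$-terms, and vanishing of the contour integral from the asymptotics \eqref{asymptvv} and the decay of $M-\tilde M$. A couple of minor slips to clean up when you write it out: the contour $\Gamma_N$ has length $O(N^2)$, not $O(1)$, so you need the integrand to be $o(N^{-2})$ (which it is, once you track the $|\rho|^{-1}$ decay of $P(x,\mu)-I$ and the $(\la-\mu)^{-1}$ factor); and the ``precise estimates of Section~4'' are not actually needed here --- Lemma~\ref{lem:cont} uses only the standard asymptotics \eqref{asymptvv}, \eqref{asymptla}, \eqref{asymptal}, while Lemmas~\ref{lem:Rl2}--\ref{lem:Rl1} come into play later for the solvability of the main equation.
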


It is inconvenient to use~\eqref{cont} as the main equation of the inverse problem, since the series in \eqref{cont} only converges ``with brackets''. Below we transform~\eqref{cont} into a linear equation in a specially constructed Banach space.

Let $\mathcal J =: \{ j_k \}_{k = 1}^{|\mathcal J|}$. Divide the square roots $\{ \rho_{nki} \}$ of the eigenvalues into collections (multisets) as follows:
\begin{equation} \label{defG}
\begin{array}{rl}
G_1 & := \{ \rho_{nki} \colon (n, k) \in J, n \le n_0, i = 0, 1 \},
\\ G_{|\mathcal J|q + s + 1} & := \{ \rho_{nki} \colon n = n_0 + q + 1, r_k = r_{j_s}, i = 0, 1 \}, \quad q \ge 0, \quad s = \overline{1, |\mathcal J|}.
\end{array}
\end{equation}
In view of asymptotics~\eqref{asymptla}, we can choose and fix $n_0$ such that $G_n \cap G_k = \varnothing$ for $n \ne k$.

For any multiset $\mathcal G$ of real numbers, let $B(\mathcal G)$ be the finite-dimensional space of matrix functions $f \colon \mathcal G \to \mathbb C^{m \times m}$ such that $f(\rho) = f(\theta)$ if $\rho = \theta$. The norm in $B(\mathcal G)$ is defined as follows:
\begin{equation} \label{normBG}
   \| f \|_{B(\mathcal G)} = \max \left\{ \max_{\rho \in \mathcal G} \| f(\rho) \|, \max_{\substack{\rho \ne \theta \\ \rho, \theta \in \mathcal G}} 
   |\rho - \theta|^{-1} \| f(\rho) - f(\theta) \| \right\}.
\end{equation}

Introduce the Banach space $\mathfrak B$ of infinite sequences:
\begin{equation} \label{defB}
\mathfrak B := \{ f = \{ f_n \}_{n \ge 1} \colon f_n \in B(G_n), n \ge 1, \| f \|_{\mathfrak B} := \sup_{n \ge 1} \| f_n \|_{B(G_n)} < \iy \}.
\end{equation}

For $(n, k) \in J$, $i = 0, 1$, denote
$$
T_{nki} := \left\{ \begin{array}{ll}
                T_1 + T_1^{\perp} \rho_{nki}, \quad & \rho_{nki} \ne 0, \\
                I, \quad & \rho_{nki} = 0,
            \end{array} \right.
\qquad \phi_{nki}(x) := \vv(x, \la_{nki}) T_{nki}.
$$
Put $\phi(x) := \{ \phi_n(x) \}_{n = 1}^{\iy}$, $\phi_n(x)(\rho_{lsj}) := \phi_{lsj}(x)$ for $\rho_{lsj} \in G_n$. Analogously, define $\tilde \phi(x)$ replacing $\vv$ by $\tilde \vv$. Using relation~\eqref{asymptvv}, we obtain the estimates
$$
\| \phi_{nki} (x) \| \le C, \quad \| \phi_{nki}(x) - \phi_{lsj}(x) \| \le C |\rho_{nki} - \rho_{lsj}|, \quad \rho_{nki}, \rho_{lsj} \in G_q, 
$$
for $q \ge 1$, $x \in [0, \pi]$. Hence, $\phi(x) \in \mathfrak B$ and, similarly, $\tilde \phi(x) \in \mathfrak B$ for each fixed $x \in [0, \pi]$. In addition, $\phi(x)$ and $\tilde \phi(x)$ are uniformly bounded in $\mathfrak B$ with respect to $x \in [0, \pi]$.

For each fixed $x \in [0, \pi]$, define the linear operator $\tilde R(x) \colon \mathfrak B \to \mathfrak B$, $\tilde R(x) = [\tilde R_{k, n}(x)]_{k, n = 1}^{\iy}$, acting on an element $f = \{ f_n \}_{n = 1}^{\iy}$ of $\mathfrak B$ by the following rule:
\begin{gather} \label{defR1}
    (f \tilde R(x))_n = \sum_{k = 1}^{\iy} f_k \tilde R_{k, n}(x), \quad \tilde R_{k, n}(x) \colon B(G_k) \to B(G_n), \\ \label{defR2}
    (f_k \tilde R_{k, n}(x))(\rho_{\eta q i}) = \sum_{\rho_{lsj} \in G_k} (-1)^j f_k(\rho_{lsj}) T_{lsj}^{-1} \al'_{lsj} \tilde D(x, \la_{lsj}, \la_{\eta q i}) T_{\eta q i} 
    \quad \rho_{\eta q i} \in G_n. 
\end{gather}
Here we put operators to the right of operands to show the order of matrix multiplication.

\begin{thm} \label{thm:RB}
The series \eqref{defR1} converges in the $B(G_n)$-norm.
For each fixed $x \in [0, \pi]$, the operator $\tilde R(x)$ is bounded and can be approximated by finite-dimensional operators in the norm $\| . \|_{\mathfrak B \to \mathfrak B}$.
\end{thm}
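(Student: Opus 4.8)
The plan is to deduce all three assertions from a single matricial Schur-type bound for the block kernel of $\tilde R(x)$, namely
\begin{equation}\label{eq:RBschur}
\sup_{n\ge1}\sum_{k\ge1}\|\tilde R_{k,n}(x)\|_{B(G_k)\to B(G_n)}<\iy\quad\text{uniformly in }x\in[0,\pi].
\end{equation}
Indeed, once \eqref{eq:RBschur} holds, for any $f=\{f_n\}_{n\ge1}\in\mathfrak B$ the series \eqref{defR1} converges absolutely in the $B(G_n)$-norm, since $\sum_{k\ge1}\|f_k\tilde R_{k,n}(x)\|_{B(G_n)}\le\|f\|_{\mathfrak B}\sum_{k\ge1}\|\tilde R_{k,n}(x)\|_{B(G_k)\to B(G_n)}$; the resulting sequence lies in $\mathfrak B$, and $\|\tilde R(x)\|_{\mathfrak B\to\mathfrak B}$ is bounded by the left-hand side of \eqref{eq:RBschur}. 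Denoting by $P_N$ the natural norm-one projection of $\mathfrak B$ onto $\bigoplus_{n\le N}B(G_n)$, the operator $P_N\tilde R(x)P_N$ is finite-dimensional and
$$
\|\tilde R(x)-P_N\tilde R(x)P_N\|_{\mathfrak B\to\mathfrak B}\le\sup_{n>N}\sum_{k\ge1}\|\tilde R_{k,n}(x)\|_{B(G_k)\to B(G_n)}+\sup_{n\ge1}\sum_{k>N}\|\tilde R_{k,n}(x)\|_{B(G_k)\to B(G_n)},
$$
so the approximation claim reduces to showing that both tails vanish as $N\to\iy$. Thus everything comes down to a sufficiently sharp estimate of $\|\tilde R_{k,n}(x)\|_{B(G_k)\to B(G_n)}$.

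The first step is to make the kernel \eqref{defR2} explicit. Substituting \eqref{tvv} into \eqref{defD} and using $T_1T_1^{\perp}=0$, one gets, with $\rho=\sqrt\la$, $\theta=\sqrt\mu$,
\begin{equation}\label{eq:RBDexpl}
\tilde D(x,\la,\mu)=g_1(x,\rho,\theta)\,T_1+\frac{1}{\rho\theta}\,g_2(x,\rho,\theta)\,T_1^{\perp},\qquad g_{1,2}(x,\rho,\theta)=\tfrac12\Bigl(\tfrac{\sin(\rho-\theta)x}{\rho-\theta}\pm\tfrac{\sin(\rho+\theta)x}{\rho+\theta}\Bigr),
\end{equation}
where $g_1,g_2$ are bounded uniformly in $x\in[0,\pi]$ and in $\rho,\theta$ (with the removable-singularity reading when $\rho=\pm\theta$). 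The crucial observation — and the feature absent in the regular case — is that the growth of $T_{lsj}$, $T_{\eta qi}$ and $\al'_{lsj}$ on $\Ran T_1^{\perp}$ is exactly compensated by the factor $(\rho\theta)^{-1}$ in \eqref{eq:RBDexpl} together with the structure of $\al'$. Writing $\al'_{lsj}=\tfrac2\pi T_{lsj}B'_{lsj}T_{lsj}$ with $B'_{lsj}:=\tfrac\pi2 T_{lsj}^{-1}\al'_{lsj}T_{lsj}^{-1}$, one has $B'_{lsj}=(B'_{lsj})^{\dagger}\ge0$ and $\|B'_{lsj}\|\le C$ (the latter because, by \eqref{asymptal} and \eqref{tal} and the non-negativity of the $B'_{lsj}$, these matrices, with $l,j$ fixed, sum over the indices $s$ with $r_s\in J_k$ to a uniformly bounded matrix). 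Using $T_{lsj}T_1=T_1$ and $T_1T_{\eta qi}=T_1$, the product occurring in \eqref{defR2} collapses to
\begin{equation}\label{eq:RBcollapse}
T_{lsj}^{-1}\al'_{lsj}\,\tilde D(x,\la_{lsj},\la_{\eta qi})\,T_{\eta qi}=\tfrac2\pi B'_{lsj}\bigl(g_1(x,\rho_{lsj},\rho_{\eta qi})\,T_1+g_2(x,\rho_{lsj},\rho_{\eta qi})\,T_1^{\perp}\bigr)
\end{equation}
(for $\rho_{lsj}\ne0\ne\rho_{\eta qi}$; the finitely many exceptional indices lie in $G_1$ and contribute only finite-rank terms). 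The right-hand side of \eqref{eq:RBcollapse} is uniformly bounded and, by the block separation built into \eqref{defG} together with the asymptotics \eqref{asymptla}, is $O\bigl((|n-k|+1)^{-1}\bigr)$ when $\rho_{lsj}\in G_k$ and $\rho_{\eta qi}\in G_n$ lie in distinct blocks. Differentiating $g_1,g_2$ and $T_{\eta qi}$ in the second argument $\theta$ gives the same bound for the increments over $G_n$, so it is enough to control the $B(G_n)$-norm of \eqref{defR2} through these two estimates.

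The bound $O\bigl((|n-k|+1)^{-1}\bigr)$ is not summable in $k$, so the remaining decay has to be squeezed out of the sign alternation $(-1)^j$ in \eqref{defR2}, which within the source block $G_k$ opposes the terms with $j=0$ (where $\la_{ls0}=\la_{ls}$) to those with $j=1$ (where $\la_{ls1}=\tilde\la_{ls}$). Since $\rho_{ls0}-\rho_{ls1}=\varkappa_{ls0}$ with $\{\varkappa_{ls0}\}\in l_2$ by \eqref{asymptla}, the scalars $g_1,g_2$ in the two paired contributions are close; and the discrepancy between the prefactors $B'_{ls0}$ and $B'_{ls1}$ is small only after summation over the finitely many indices constituting $G_k$: by \eqref{asymptal} the block-sum of $B'_{ls0}$ equals the projector $A_k$ up to an $l_2$-error, while by \eqref{tal} the block-sum of $B'_{ls1}$ equals $A_k$ exactly. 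Collecting these estimates yields
\begin{equation}\label{eq:RBfinal}
\|\tilde R_{k,n}(x)\|_{B(G_k)\to B(G_n)}\le\frac{C\,\omega_k}{|n-k|+1},\qquad\{\omega_k\}_{k\ge1}\in l_2,
\end{equation}
uniformly in $x\in[0,\pi]$. The Cauchy--Schwarz inequality then gives $\sum_{k\ge1}\|\tilde R_{k,n}(x)\|\le C\|\omega\|_{l_2}\bigl(\sum_{k\ge1}(|n-k|+1)^{-2}\bigr)^{1/2}$, bounded uniformly in $n$ and $x$, which is \eqref{eq:RBschur}; and, splitting the sum over $k$ into the ranges $|k-n|\le M$ and $|k-n|>M$ and letting first $M\to\iy$ and then $N\to\iy$, one sees that both tails in the estimate for $\|\tilde R(x)-P_N\tilde R(x)P_N\|$ tend to $0$. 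This proves all three assertions.

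The main obstacle is precisely the cancellation bookkeeping of the last two paragraphs. Because the potential is singular, $T_{lsj}$, $T_{\eta qi}$ and $\al'_{lsj}$ are unbounded in the indices, and the estimate \eqref{eq:RBfinal} is obtained only by tracking their exact orders of growth, by exploiting the $(\rho\theta)^{-1}$ gain in the explicit kernel \eqref{eq:RBDexpl}, and — simultaneously — by exploiting the $(-1)^j$-pairing, whose internal discrepancies are $l_2$-small and visible only after summing over a whole block $G_k$. In the regular case the factors $T_{lsj}$, $T_{\eta qi}$ are bounded and this difficulty does not arise; here the same circle of estimates reappears in a sharper quantitative form in Lemmas~\ref{lem:Rl2}--\ref{lem:Rl1} and is what later underlies the solvability of the main equation.
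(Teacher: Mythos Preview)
Your proof is correct and follows essentially the same route as the paper: both establish the block estimate $\|\tilde R_{k,n}(x)\|_{B(G_k)\to B(G_n)}\le C\xi_k/(|n-k|+1)$ with $\{\xi_k\}\in l_2$ (your $\omega_k$ is the paper's $\xi_n$ of~\eqref{defxi}) and deduce all three assertions from it via Cauchy--Schwarz and a tail argument. The only cosmetic differences are that the paper obtains this bound via the summation rule~\eqref{group} together with Lemma~\ref{lem:estD} rather than your explicit kernel computation and collapse formula, and that the paper's finite-rank approximant truncates only in the source index $k$ (so only the second of your two tails is needed) rather than your two-sided $P_N\tilde R(x)P_N$.
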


Theorem~\ref{thm:RB} is proved in Section~4. Taking the above definitions into account, we rewrite relation~\eqref{cont} in the form
\begin{equation} \label{main}
\phi(x) (\mathcal I + \tilde R(x)) = \tilde \phi(x), \quad x \in [0, \pi].
\end{equation}
where $\mathcal I$ is the unit operator in $\mathfrak B$.
For each fixed $x \in [0, \pi]$, relation \eqref{main} is a linear equation with respect to $\phi(x)$ in the Banach space $\mathfrak B$. 
Note that $\tilde \phi(x)$ and $\tilde R(x)$ are constructed by the spectral data $\{ \la_{nk}, \al_{nk} \}_{(n, k) \in J}$ and by the model problem $\tilde L$, while the unknown element $\phi(x)$ is related to the problem $L$. Further, the solution of the main equation~\eqref{main} is used for constructive solution of Inverse Problem~\ref{ip:1}.
Therefore, we call \eqref{main} the \textit{main equation} of the inverse problem.

\section{Estimates}

In this section, we investigate properties of the operator $\tilde R(x)$ and obtain the estimates needed in further proofs. It is supposed that $\tilde R(x)$ is the operator constructed by the collection $\{ \la_{nk}, \al_{nk} \}_{(n, k) \in J}$ satisfying the asymptotics~\eqref{asymptla} and~\eqref{asymptal} and by the problem $\tilde L = L(0, T_1, T_2, 0)$. We emphasize that $\{ \la_{nk}, \al_{nk} \}_{(n, k) \in J}$ are not assumed to be the spectral data of some problem $L$. This allows us to use the results of this section in the proof of the sufficiency in Theorem~\ref{thm:nsc}.

For $n \ge 1$, denote
\begin{gather} \label{defalG}
\hat \al(G_n) := \sum_{\rho_{lsj} \in G_n} (-1)^j T_{lsj}^{-1} \al'_{lsj} T_{lsj}^{-1},  \\ \label{defxi}
\xi_n := \sum_{\rho, \theta \in G_n} |\rho - \theta| + \| \hat \al(G_n) \|.
\end{gather}
It follows from the asymptotic formulas~\eqref{asymptla} and~\eqref{asymptal} that
\begin{equation} \label{Xi}
\Xi := \left( \sum_{n = 1}^{\iy} \xi_n^2 \right)^{1/2} < \iy.
\end{equation}

Put 
$$
\tilde D_T (x, \rho, \theta) := (T_1 + \rho T_1^{\perp})\tilde D(x, \rho^2, \theta^2) (T_1 + \theta T_1^{\perp}).
$$
Substituting~\eqref{tvv} into~\eqref{defD} and using~\eqref{asymptla}, \eqref{defxi}, we obtain the following lemma.

\begin{lem} \label{lem:estD}
For $x \in [0, \pi]$, $n, k \ge 1$, $\rho, \zeta \in G_n$, $\theta, \chi \in G_k$, the following estimates hold
\begin{gather*}
    \| \tilde D_T(x, \rho, \theta) \| \le \frac{C}{|n - k| + 1}, \quad
    \| \tilde D_T(x, \rho, \theta) - \tilde D_T(x, \rho, \chi) \| \le \frac{C \xi_k}{|n - k| + 1}, \\
    \| \tilde D_T(x, \rho, \theta) - \tilde D_T(x, \rho, \chi) - \tilde D_T(x, \zeta, \theta) + \tilde D_T(x, \zeta, \chi) \| \le \frac{C \xi_n \xi_k}{|n - k| + 1}.
\end{gather*}
For $x \in [0, \pi]$, $n \ge 1$, $\rho, \zeta \in G_n$, $\theta \in \mathbb C$, we have
$$
\| \tilde D_T(x, \rho, \theta) \| \le \frac{C \exp(|\mbox{Im}\, \theta|x)}{|\theta - m_n| + 1}, \quad 
\| \tilde D_T(x, \rho, \theta) - \tilde D_T(x, \zeta, \theta) \| \le \frac{C \xi_n \exp(|\mbox{Im}\, \theta|x)}{|\theta - m_n| + 1},
$$
where $m_n = l + r_s$, $(l, s) \colon \rho_{ls0} \in G_n$.
In all the estimates, the constant $C$ does not depend on $n$, $k$, $x$, etc.
\end{lem}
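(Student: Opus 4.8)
The plan is to start from the explicit formula for $\tilde D$ obtained by substituting the model solution \eqref{tvv} into \eqref{defD}. Since $\tilde\vv(x,\la) = \cos\rho x\,T_1 + \rho^{-1}\sin\rho x\,T_1^{\perp}$, a direct computation of $\langle \tilde\vv(x,\la),\tilde\vv(x,\mu)\rangle/(\la-\mu)$ gives $\tilde D(x,\rho^2,\theta^2)$ as a combination of the standard scalar kernels
$$
\frac{\sin(\rho-\theta)x}{\rho-\theta}, \quad \frac{\sin(\rho+\theta)x}{\rho+\theta}, \quad \frac{\cos(\rho-\theta)x - \cos(\rho+\theta)x}{\cdot}, \quad \dots
$$
multiplied on the left and right by the projections $T_1, T_1^{\perp}$ and by scalar factors like $\rho^{-1}$, $\theta^{-1}$, $(\rho\theta)^{-1}$. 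After conjugating by $(T_1+\rho T_1^{\perp})$ on the left and $(T_1+\theta T_1^{\perp})$ on the right to form $\tilde D_T(x,\rho,\theta)$, these unbounded factors $\rho^{-1},\theta^{-1}$ are compensated, so $\tilde D_T$ is a sum of products of bounded matrices with scalar kernels of the above type. The whole lemma then reduces to scalar estimates for functions of the form $\dfrac{\sin(\rho-\theta)x}{\rho-\theta}$ and $\dfrac{\sin(\rho+\theta)x}{\rho+\theta}$, together with their finite differences in $\theta$ (and in $\rho$).

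For the first group of estimates, the key point is that for $\rho\in G_n$ and $\theta\in G_k$, the asymptotics \eqref{asymptla} force $\rho+\theta \asymp n+k$ and $|\rho-\theta| \ge c(|n-k|-1)$ away from a bounded set of indices, while inside a bounded set of indices the kernels are simply bounded. Hence $\bigl|\tfrac{\sin(\rho\pm\theta)x}{\rho\pm\theta}\bigr| \le \dfrac{C}{|n-k|+1}$ uniformly in $x\in[0,\pi]$; this gives the first bound. For the difference bounds I would use the mean value theorem in the second variable: $\bigl|\tfrac{\sin(\rho-\theta)x}{\rho-\theta} - \tfrac{\sin(\rho-\chi)x}{\rho-\chi}\bigr| \le |\theta-\chi|\sup|g'|$ where $g(u)=\tfrac{\sin(\rho-u)x}{\rho-u}$, and $|g'|$ is again $O(1/(|n-k|+1))$ uniformly; since $|\theta-\chi|\le \xi_k$ by definition \eqref{defxi}, the second estimate follows. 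The mixed second difference is handled the same way, differentiating once in each variable, picking up $\xi_n$ from the $\rho$-difference and $\xi_k$ from the $\theta$-difference, and retaining the $(|n-k|+1)^{-1}$ decay from the second derivative of the kernel.

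For the last two estimates, where $\theta\in\mathbb C$ is unrestricted, the relevant feature is the exponential factor: $\bigl|\tfrac{\sin(\rho-\theta)x}{\rho-\theta}\bigr| \le \dfrac{C\exp(|\mathrm{Im}\,\theta|x)}{|\rho-\theta|+1}$, and since $\rho\in G_n$ sits within $O(1)$ of $m_n$ we may replace $|\rho-\theta|$ by $|\theta-m_n|$ at the cost of the constant; the same argument applies to the $\rho+\theta$ kernel because $|\theta+m_n|\ge |\theta - m_n| - C$ only in part of the plane, so more care is needed there — one keeps $|\rho+\theta|+1$ in the denominator and notes it dominates $|\theta-m_n|+1$ up to a constant when $\mathrm{Re}\,\theta$ has the appropriate sign, and in the opposite case the bound is trivial from boundedness. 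The $\rho$-difference bound is once more the mean value theorem in $\rho$, producing the factor $\xi_n$ (since $|\rho-\zeta|\le\xi_n$) while the derivative of the kernel in $\rho$ retains the same exponential-over-distance behaviour.

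The main obstacle is purely bookkeeping: making sure that after expanding $\tilde D_T$ into its several scalar pieces, every piece genuinely carries the claimed decay in $|n-k|+1$ \emph{and} that the projections $T_1, T_1^{\perp}$ are inserted in exactly the right places so that the compensating factors $(T_1+\rho T_1^{\perp})$ absorb all the $\rho^{-1}$'s and $\theta^{-1}$'s without leaving a residual unbounded term. In particular one must check the "cross" terms $T_1(\cdots)T_1^{\perp}$ and $T_1^{\perp}(\cdots)T_1$, which come with a single $\rho^{-1}$ or $\theta^{-1}$, are matched by exactly one factor of $\rho$ or $\theta$ from the conjugation. Once the algebraic form of $\tilde D_T$ is pinned down, all five inequalities are immediate consequences of the scalar estimates above together with \eqref{asymptla} and the definition of $\xi_n$.
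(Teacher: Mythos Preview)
Your plan matches the paper's own argument, which consists of the single sentence ``Substituting~\eqref{tvv} into~\eqref{defD} and using~\eqref{asymptla}, \eqref{defxi}''; you have correctly identified that after the conjugation $\tilde D_T$ reduces to the scalar kernels $\dfrac{\sin(\rho\pm\theta)x}{\rho\pm\theta}$, and your mean-value arguments for the differences are the right ones.

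Two small remarks. First, the ``main obstacle'' you flag largely evaporates: since $T_1 T_1^{\perp}=0$, the cross terms $T_1(\cdots)T_1^{\perp}$ and $T_1^{\perp}(\cdots)T_1$ simply vanish, and one gets directly (cf.\ \eqref{sm1})
\[
\tilde D_T(x,\rho,\theta)=\int_0^x\bigl(\cos\rho t\cos\theta t\;T_1+\sin\rho t\sin\theta t\;T_1^{\perp}\bigr)\,dt,
\]
so there is no residual $\rho^{-1}$ or $\theta^{-1}$ to worry about. Second, your treatment of the $\rho+\theta$ kernel in the last two estimates is not quite right: for $\theta=-\rho$ with $\rho$ large the kernel equals $x$, which is bounded but not $O\bigl((|\theta-m_n|+1)^{-1}\bigr)$, so ``trivial from boundedness'' fails. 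The estimate is intended (and only ever applied) for $\theta$ in the range of the square root, i.e.\ $\mathrm{Re}\,\theta\ge 0$; under that restriction $|\rho+\theta|^2\ge \rho^2+|\theta|^2$, whence $|\rho+\theta|+1\ge c(|\theta-m_n|+1)$ uniformly, and the bound for the $\rho+\theta$ kernel follows directly.
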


\begin{lem}
For $x \in [0, \pi]$, the following estimates hold
\begin{gather} \label{estRkn}
\| \tilde R_{k, n}(x) \|_{B(G_k) \to B(G_n)} \le \frac{C \xi_k}{|n - k| + 1}, \quad n, k \ge 1, \\ \label{estR}
\| \tilde R(x) \|_{\mathfrak B \to \mathfrak B} \le C \Xi,
\end{gather}
where the constant $C$ does not depend on $n$, $k$ and $x$.
\end{lem}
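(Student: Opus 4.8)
The plan is to derive both estimates directly from the definition \eqref{defR2} of $\tilde R_{k,n}(x)$ together with the estimates on $\tilde D_T$ from Lemma~\ref{lem:estD}. First I would rewrite the action of $\tilde R_{k,n}(x)$ in terms of the ``normalized'' quantities $\hat\al$ and $\tilde D_T$. Observe that $T_{lsj}^{-1}\al'_{lsj}\tilde D(x,\la_{lsj},\la_{\eta qi})T_{\eta qi}$ can be rearranged, after inserting $T_{lsj}T_{lsj}^{-1}$ appropriately, so that the $\tilde D$-factor becomes $\tilde D_T(x,\rho_{lsj},\rho_{\eta qi})$ and the $\al'$-factor becomes a term contributing to $\hat\al(G_k)$. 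More precisely, $(f_k\tilde R_{k,n}(x))(\rho_{\eta qi}) = \sum_{\rho_{lsj}\in G_k}(-1)^j f_k(\rho_{lsj})T_{lsj}^{-1}\al'_{lsj}T_{lsj}^{-1}\,\tilde D_T(x,\rho_{lsj},\rho_{\eta qi})$, once one checks that $T_{lsj}\tilde D(x,\la_{lsj},\la_{\eta qi})T_{\eta qi}=\tilde D_T(x,\rho_{lsj},\rho_{\eta qi})$ (which follows from the definitions of $T_{lsj}$ and $\tilde D_T$, noting $T_{lsj}=T_1+\rho_{lsj}T_1^\perp$ when $\rho_{lsj}\neq0$, and handling the $\rho=0$ case separately).

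Next I would estimate the $B(G_k)\to B(G_n)$ operator norm using the definition \eqref{normBG} of the norm on $B(G_n)$. This requires two bounds: the sup-norm of $(f_k\tilde R_{k,n}(x))(\rho_{\eta qi})$ over $\rho_{\eta qi}\in G_n$, and the Lipschitz-type quotient $|\rho_{\eta qi}-\rho_{\eta'q'i'}|^{-1}\|(f_k\tilde R_{k,n}(x))(\rho_{\eta qi})-(f_k\tilde R_{k,n}(x))(\rho_{\eta'q'i'})\|$ over distinct points of $G_n$. For the first, I bound $\|f_k(\rho_{lsj})\|\le\|f_k\|_{B(G_k)}$, use $\sum_{\rho_{lsj}\in G_k}\|T_{lsj}^{-1}\al'_{lsj}T_{lsj}^{-1}\|$ — which is controlled by $\|\hat\al(G_k)\|\le\xi_k$ after accounting for the signs $(-1)^j$; here one must be a little careful, since $\hat\al(G_k)$ is an alternating sum, but each individual term $\|T_{lsj}^{-1}\al'_{lsj}T_{lsj}^{-1}\|$ is itself $\le\xi_k$ because $\al'_{lsj}\ge0$ makes the $j=0$ and $j=1$ parts each non-negative definite and hence bounded by the difference's norm plus the other part; alternatively use the asymptotics \eqref{asymptal} directly to bound the sum of the $j=0$ terms and of the $j=1$ terms separately, each being $O(\xi_k)$ — and use the first estimate of Lemma~\ref{lem:estD}, $\|\tilde D_T(x,\rho_{lsj},\rho_{\eta qi})\|\le C/(|n-k|+1)$. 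For the Lipschitz quotient, I split the difference $\tilde D_T(x,\rho_{lsj},\rho_{\eta qi})-\tilde D_T(x,\rho_{lsj},\rho_{\eta'q'i'})$ and invoke the second estimate of Lemma~\ref{lem:estD} with roles of the arguments matched to the group indices; combined with $\|f_k(\rho_{lsj})\|\le\|f_k\|_{B(G_k)}$ and the same summation over $G_k$, this yields the bound $C\xi_k/(|n-k|+1)$, giving \eqref{estRkn}.

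Finally, \eqref{estR} follows by a Schur-test / triangle-inequality argument: for $f=\{f_n\}\in\mathfrak B$, $\|(f\tilde R(x))_n\|_{B(G_n)}\le\sum_{k\ge1}\|f_k\|_{B(G_k)}\|\tilde R_{k,n}(x)\|_{B(G_k)\to B(G_n)}\le\|f\|_{\mathfrak B}\sum_{k\ge1}\frac{C\xi_k}{|n-k|+1}$, and by the Cauchy–Schwarz inequality $\sum_{k\ge1}\frac{\xi_k}{|n-k|+1}\le\bigl(\sum_k\xi_k^2\bigr)^{1/2}\bigl(\sum_k(|n-k|+1)^{-2}\bigr)^{1/2}\le C\Xi$, uniformly in $n$, using \eqref{Xi}. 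Taking the supremum over $n$ gives $\|f\tilde R(x)\|_{\mathfrak B}\le C\Xi\|f\|_{\mathfrak B}$, i.e.\ \eqref{estR}. The main obstacle I anticipate is the bookkeeping in the first step: correctly matching which argument of $\tilde D_T$ lies in $G_k$ versus $G_n$ so that the asymmetric estimates of Lemma~\ref{lem:estD} (where the Lipschitz bound in the second slot carries $\xi_k$) are applied in the right slot, and handling the alternating-sign structure of $\hat\al(G_k)$ so that the naive termwise bound of the sum over $G_k$ is genuinely $O(\xi_k)$ rather than merely finite — this is where the non-negativity of $\al'_{lsj}$ and the asymptotics \eqref{asymptal} must be used rather than the definition of $\xi_k$ alone.
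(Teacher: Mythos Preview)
Your argument has a genuine gap at the step where you try to bound $\sum_{\rho_{lsj}\in G_k}\|T_{lsj}^{-1}\al'_{lsj}T_{lsj}^{-1}\|$ by $C\xi_k$. This bound is simply false: for $k>1$ the $j=1$ part of the sum equals $\tfrac{2}{\pi}A_q$ exactly (see~\eqref{tal}), which has norm $\tfrac{2}{\pi}$, and the $j=0$ part is likewise $\tfrac{2}{\pi}A_q+o(1)$ by~\eqref{asymptal}. So each of the two halves of the sum is of size $O(1)$, not $O(\xi_k)$; neither the positivity of $\al'_{lsj}$ nor the asymptotics~\eqref{asymptal} can make the individual halves small. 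The smallness $\xi_k$ lives \emph{only} in the cancellation between the $j=0$ and $j=1$ halves, and your termwise estimate destroys that cancellation. Consequently your sup-norm bound gives only $C/(|n-k|+1)$, and the same defect recurs in the Lipschitz-quotient step; the resulting row sum $\sum_k (|n-k|+1)^{-1}$ diverges and~\eqref{estR} does not follow.

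The missing device is exactly the Abel-type regrouping the paper calls the ``summation rule''~\eqref{group}: write $a_u=f_k(\rho_{lsj})$, $b_u=(-1)^jT_{lsj}^{-1}\al'_{lsj}T_{lsj}^{-1}$, $c_u=\tilde D_T(x,\rho_{lsj},\rho_{\eta qi})$, fix a reference index $u=1$ in $G_k$, and split $\sum a_u b_u c_u=\sum(a_u-a_1)b_uc_u+a_1\sum b_u(c_u-c_1)+a_1(\sum b_u)c_1$. In the first piece $\|a_u-a_1\|\le\|f_k\|_{B(G_k)}|\rho_u-\rho_1|$ by the definition of the $B(G_k)$-norm, and $\sum_u|\rho_u-\rho_1|\le\xi_k$; in the second piece $\|c_u-c_1\|\le C|\rho_u-\rho_1|/(|n-k|+1)$ by Lemma~\ref{lem:estD}; in the third piece $\sum_u b_u=\hat\al(G_k)$ and $\|\hat\al(G_k)\|\le\xi_k$. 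Each piece is therefore $\le C\|f_k\|_{B(G_k)}\xi_k/(|n-k|+1)$, and the $\|b_u\|=O(1)$ that worried you is harmless because it only multiplies quantities that already carry a $\xi_k$. The Lipschitz-quotient part of the $B(G_n)$-norm is handled the same way, now invoking the two- and four-term difference estimates of Lemma~\ref{lem:estD}. Once~\eqref{estRkn} is established this way, your Cauchy--Schwarz argument for~\eqref{estR} is correct.
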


\begin{proof}
Estimate~\eqref{estRkn} is proved by using~\eqref{normBG}, \eqref{defR2}, and the summation rule
\begin{equation} \label{group}
\sum_{u = 1}^v a_u b_u c_u  = \sum_{u = 1}^v (a_u - a_1) b_u c_u + a_1 \sum_{u = 1}^v b_u (c_u - c_1) + a_1 \sum_{u = 1}^v b_u \, c_1.
\end{equation}
We put
$$
a_u = f_k(\rho_{lsj}), \quad b_u = (-1)^j T_{lsj}^{-1} \al'_{lsj} T_{lsj}^{-1}, \quad c_u = T_{lsj} \tilde D(x, \la_{lsj}, \la_{\eta q i}) T_{\eta q i},
$$
apply the estimates $\| f_k \|_{B(G_k)} \le C$, \eqref{defxi} and Lemma~\ref{lem:estD}, and so arrive at~\eqref{estRkn}.

Relations~\eqref{defB} and \eqref{defR1} yield
$$
\| \tilde R(x) \|_{\mathfrak B \to \mathfrak B} \le \sup_{n \ge 1} \sum_{k = 1}^{\iy} \| \tilde R_{k,n}(x) \|_{B(G_k) \to B(G_n)}.
$$
Using~\eqref{Xi} and~\eqref{estRkn}, we arrive at~\eqref{estR}.
\end{proof}

\begin{proof}[Proof of Theorem~\ref{thm:RB}]
It readily follows from~\eqref{Xi}, \eqref{estRkn}, and~\eqref{estR}
that the series \eqref{defR1} converges in $B(G_n)$-norm and the operator $\tilde R(x)$ is bounded. Define the finite-dimensional operators
$$
\tilde R^N(x) = [\tilde R_{k, n}^N(x)]_{k, n = 1}^{\iy}, \quad \tilde R_{k, n}^N = \left\{
\begin{array}{ll}
\tilde R_{k, n}(x), \quad & k \le N, \\
0, \quad & k > N,
\end{array}\right.
\quad N \ge 1.
$$
Using~\eqref{estRkn}, it is easy to show that the sequence $\{ \tilde R^N(x) \}$ converges to $\tilde R(x)$ in the norm $\| .\|_{\mathfrak B \to \mathfrak B}$ for each fixed $x \in [0, \pi]$.
\end{proof}

Further we need the following auxiliary proposition, which easily follows from asymptotics~\eqref{asymptla} and the Riesz-basicity of the sequences $\{ \cos (n + \varkappa_n) x \}_{n = 0}^{\iy}$, $\{ \sin (n + \varkappa_n) x \}_{n = 1}^{\iy}$ in $L_2(0, \pi)$, $\{ \varkappa_n \} \in l_2$, $n + \varkappa_n \ne k + \varkappa_k$ for $n \ne k$ (see \cite{He01}).

\begin{prop} \label{prop:cos}
(i) Let $\{ \varkappa_{nki} \}$ be an arbitrary sequence from $l_2$. Then the series 
$$
F_c(x) := \sum_{n,k,i} \varkappa_{nki} \cos (\rho_{nki} x), \quad F_s(x) := \sum_{n,k,i} \varkappa_{nki} \sin (\rho_{nki} x)
$$
converge in $L_2(0, \pi)$ and 
$$
\| F_c \|_{L_2(0, \pi)}, \| F_s \|_{L_2(0, \pi)} \le C \| \{ \varkappa_{nki}\}\|_{l_2},
$$
where the constant $C$ depends only on $\{ \rho_{nki} \}$ and not on $\{ \varkappa_{nki} \}$. 

(ii) Let $F(x)$ be arbitrary function from $L_2(0, \pi)$. Put
$$
\varkappa_{c, nki} = \int_0^{\pi} F(x) \cos (\rho_{nki} x) \, dx, \quad
\varkappa_{s, nki} = \int_0^{\pi} F(x) \sin (\rho_{nki} x) \, dx.
$$
Then the sequences $\{ \varkappa_{c, nki} \}$ and $\{ \varkappa_{s, nki} \}$ belong to $l_2$ and
$$
    \| \{ \varkappa_{c, nki} \} \|_{l_2},  \| \{ \varkappa_{s, nki} \} \|_{l_2} \le C \| F \|_{L_2(0, \pi)},
$$
where the constant $C$ depends only on $\{ \rho_{nki} \}$ and not on $F$.

In (i) and (ii), the indices $(n, k, i)$ run over the set: $(n, k) \in J$, $i = 0, 1$.
\end{prop}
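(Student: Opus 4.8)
The plan is to deduce both parts of the proposition from a single Bessel-type estimate. For $(n,k,i)$ ranging over $(n,k)\in J$, $i=0,1$, I would first prove that $\{\cos(\rho_{nki}x)\}$ and $\{\sin(\rho_{nki}x)\}$ are Bessel sequences in $L_2(0,\pi)$: there is a constant $B$ depending only on $\{\rho_{nki}\}$ such that $\sum_{n,k,i}|\int_0^{\pi}F(x)\cos(\rho_{nki}x)\,dx|^2 \le B\|F\|_{L_2(0,\pi)}^2$ for all $F$, and likewise for the sines. This is exactly statement~(ii). Statement~(i) then follows by duality: for a finitely supported sequence $\{\varkappa_{nki}\}$, pairing the series $\sum\varkappa_{nki}\cos(\rho_{nki}\cdot)$ against a test function $F$, interchanging the (finite) sum with the integral, and applying the Cauchy--Schwarz inequality in $l_2$ together with the Bessel estimate gives $\|\sum\varkappa_{nki}\cos(\rho_{nki}\cdot)\|_{L_2(0,\pi)} \le \sqrt{B}\,\|\{\varkappa_{nki}\}\|_{l_2}$; applying the same bound to tails shows that $F_c$ converges in $L_2(0,\pi)$ for every $\{\varkappa_{nki}\}\in l_2$, and similarly for $F_s$.

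To prove the Bessel estimate I would split the index set into finitely many subfamilies and use two elementary stability facts: a finite union of Bessel sequences is Bessel, and adding to a Bessel sequence a sequence whose $L_2$-norms are square-summable again yields a Bessel sequence. Fix $i\in\{0,1\}$ and a value $r$ attained by the $r_k$'s, and consider the subfamily with this $i$ and with all $k$ such that $r_k=r$. For $i=1$ one has $\rho_{nk1}=n+r$, independent of $k$, so this subfamily consists of the functions $\cos((n+r)x)$ (or $\sin((n+r)x)$), each repeated at most $m$ times; writing $\cos((n+r)x)=\cos(rx)\cos(nx)-\sin(rx)\sin(nx)$ and using the Bessel property of $\{\cos(nx)\}_{n\ge 0}$ and $\{\sin(nx)\}_{n\ge 1}$ in $L_2(0,\pi)$ — equivalently, the Riesz-basicity recalled from~\cite{He01} — shows it is Bessel. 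For $i=0$ one has $\rho_{nk0}=n+r+\varkappa_{nk}$ with $\{\varkappa_{nk}\}\in l_2$ by~\eqref{asymptla}; since $|\cos(\mu x)-\cos(\nu x)|\le\pi|\mu-\nu|$ on $[0,\pi]$, we get $\|\cos(\rho_{nk0}\cdot)-\cos((n+r)\cdot)\|_{L_2(0,\pi)}^2\le\pi^3|\varkappa_{nk}|^2$, which is summable, so this subfamily is a square-summable perturbation of a Bessel sequence and hence Bessel. The finitely many zero-frequency terms occurring when $n=0$ and $r=0$ are harmless. Summing the finitely many Bessel constants over $i$ and over the distinct values $r$ produces $B$; the sine family is treated identically.

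The point worth stressing is why one cannot simply invoke a Riesz-basis property of the \emph{whole} family $\{\cos(\rho_{nki}x)\}$: it contains exact repetitions ($\rho_{nk1}=\rho_{nk'1}$ whenever $r_k=r_{k'}$) and asymptotically coalescing pairs ($\rho_{nk0}-\rho_{nk1}=\varkappa_{nk}\to 0$), hence it is not uniformly separated and admits no lower frame bound. Proposition~\ref{prop:cos} only requires the \emph{upper} (Bessel) bound, which is precisely what is stable under finite unions and under $l_2$-perturbations of the frequencies; the lower bound, destroyed by the coalescences, is never used. I expect the only mildly delicate points to be the bookkeeping of the subfamily decomposition and the routine justification of the sum--integral interchange in the duality argument; there is no genuine obstacle.
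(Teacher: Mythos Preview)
Your argument is correct and follows the same outline the paper sketches in the sentence preceding the proposition: decompose the index set into finitely many subfamilies according to the asymptotics~\eqref{asymptla} and use a known basis-type bound on each piece. The one genuine difference is that the paper invokes the He--Volkmer theorem from~\cite{He01} on the Riesz-basicity of $\{\cos((n+\varkappa_n)x)\}_{n\ge 0}$ for each fixed $(k,i)$-slice, whereas you bypass that result entirely: you perturb directly from the orthogonal systems $\{\cos(nx)\}$ and $\{\sin(nx)\}$ and observe that only the \emph{upper} (Bessel) bound is needed, so the square-summable frequency perturbation $\{\varkappa_{nk}\}\in l_2$ suffices without any separation hypothesis. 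This buys you a more elementary proof and also clarifies, as you correctly point out, that the full family cannot be a Riesz basis because of exact and asymptotic repetitions --- a point the paper's one-line justification leaves implicit. Conversely, the paper's route via~\cite{He01} would, if one wanted it, give a lower frame bound on each individual $(k,i)$-slice, but that extra information is never used downstream.
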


For convenience, for any sequence $f = \{ f_n \}_{n = 1}^{\iy} \in \mathfrak B$, we denote $f_{lsj} := f_n(\rho_{lsj})$, where $n$ is such that $\rho_{lsj} \in G_n$.

\begin{lem} \label{lem:Rl2}
Suppose that $f \in \mathfrak B$ and $g(x) = \tilde R(x) f$. Then the corresponding sequence $\{ \| g_{nki}(x) \| \}$ belongs to $l_2$ for each fixed $x \in [0, \pi]$ and 
$$
\| \{ \| g_{nki}(x) \|\}\|_{l_2} \le C \| f \|_{\mathfrak B},
$$
uniformly by $x \in [0, \pi]$.
\end{lem}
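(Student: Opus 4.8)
The plan is to write $g_{nki}(x) = (f\tilde R(x))_n(\rho_{nki})$ explicitly using \eqref{defR1}--\eqref{defR2}, and to extract from the inner sum a function of $x$ that is representable as an $L_2(0,\pi)$ object whose Fourier-type coefficients against $\cos(\rho_{nki}x)$ and $\sin(\rho_{nki}x)$ are exactly (up to bounded factors) the quantities $g_{nki}(x)$. Then Proposition~\ref{prop:cos}(ii) converts the $L_2$-bound into the desired $l_2$-bound on $\{\|g_{nki}(x)\|\}$. So first I would recall that, by the definition of $\tilde D$ in \eqref{defD}, $\tilde D(x,\la_{lsj},\la_{\eta q i}) = \int_0^x \tilde\vv(t,\la_{lsj})\tilde\vv(x,\la_{\eta q i})\,dt$, and $\tilde\vv(t,\la) = \cos(\rho t)T_1 + \rho^{-1}\sin(\rho t)T_1^\perp$ by \eqref{tvv}. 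Conjugating by the normalizing factors $T_{lsj}$, $T_{\eta q i}$ as in the definition of $\tilde D_T$, one sees that $g_{nki}(x)$ is, up to a bounded matrix factor $T_{nki}^{-1}$ on the right and the substitution $f_k(\rho_{lsj})T_{lsj}^{-1}$ (which is bounded since $\|f\|_{\mathfrak B}$ controls $f_k$), a linear combination of terms of the form $\int_0^x h(t)\,dt$ times either $\cos(\rho_{nki}x)T_1$ or $\rho_{nki}^{-1}\sin(\rho_{nki}x)T_1^\perp$, where $h$ is built from $\sum_{(l,s,j)}(-1)^j f_{lsj}T_{lsj}^{-1}\al'_{lsj}T_{lsj}^{-1}\tilde\vv_T(t,\rho_{lsj})$.

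Next I would organize this so that the $x$-dependence of $g_{nki}(x)$ splits into a ``row part'' that produces $\cos(\rho_{nki}x)$ or $\sin(\rho_{nki}x)$ and an ``inner part'' $\Psi(x) := \int_0^x \big(\sum_{(l,s,j)} \text{(stuff involving }\rho_{lsj}, \al'_{lsj}, f_{lsj})\big)\,dt$, which does not depend on $(n,k,i)$. The inner part $\Psi$ is a fixed function of $x$; I would check $\Psi \in L_2(0,\pi)$ (indeed it is even in $AC[0,\pi]$) with $\|\Psi\|_{L_2(0,\pi)} \le C\|f\|_{\mathfrak B}$, using Proposition~\ref{prop:cos}(i) with the $l_2$-sequence obtained by pairing the scalars $\|T_{lsj}^{-1}\al'_{lsj}T_{lsj}^{-1}\|$ (whose $l_2$-norm is dominated by $\Xi$ via \eqref{defxi}--\eqref{Xi}) against the bounded $\|f_{lsj}T_{lsj}^{-1}\|$. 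One must be a little careful here: the correct way is to first sum over the group $G_k$, where the number of terms is bounded, producing the combination $\hat\al(G_k)$ of \eqref{defalG} hit by $f_k$, and then sum over $k$; the sequence $\{\|\hat\al(G_k)\|\}$ lies in $l_2$ with norm $\le\Xi$, and the cross-differences inside each $G_k$ are controlled by $\xi_k$ exactly as in the proof of \eqref{estRkn}. Then $g_{nki}(x)$ is, up to bounded left/right matrix factors, $\big(\cos(\rho_{nki}x)T_1 + \rho_{nki}^{-1}\sin(\rho_{nki}x)T_1^\perp\big)$ evaluated against $\Psi$ and similar $L_2$ functions, so applying Proposition~\ref{prop:cos}(ii) to each scalar entry of $\Psi$ (and of the finitely many companion functions) yields $\{\|g_{nki}(x)\|\}\in l_2$ with $l_2$-norm $\le C\|\Psi\|_{L_2(0,\pi)} \le C\|f\|_{\mathfrak B}$, uniformly in $x$ because the constant in Proposition~\ref{prop:cos} depends only on $\{\rho_{nki}\}$.

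The main obstacle I anticipate is bookkeeping rather than a genuine analytic difficulty: one has to disentangle which factors of $\rho_{lsj}$ and $\rho_{\eta q i}$ are absorbed into the $T_{\cdot}$-normalizations and which survive, so that the surviving $x$-dependence is precisely of the $\cos(\rho x)$/$\sin(\rho x)$ type handled by Proposition~\ref{prop:cos}, and to verify that the ``inner part'' $\Psi(x)$ is genuinely independent of $(n,k,i)$ after the $T_{nki}$, $T_{nki}^{-1}$ factors are pulled out. A secondary point requiring care is that the double series (over $G_k$ and then over $k$) must be rearranged into a single $l_2$-pairing; this is legitimate because of the absolute convergence guaranteed by \eqref{estRkn} and \eqref{Xi}, but it should be stated. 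Once the representation $g_{nki}(x) = (\text{bounded})\cdot\big(\text{cos/sin in }\rho_{nki}\big)\cdot(\text{fixed }L_2\text{ function})\cdot(\text{bounded})$ is in place, the conclusion is immediate from Proposition~\ref{prop:cos}(ii).
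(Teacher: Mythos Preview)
Your plan assembles the right ingredients (the explicit form of $\tilde D$ via~\eqref{tvv}, the grouping via $\hat\al(G_k)$ and the rule~\eqref{group}, and Proposition~\ref{prop:cos}), but the central structural claim is wrong. You assert that $g_{nki}(x)$ factors as a product of an ``inner part'' $\Psi(x)=\int_0^x(\dots)\,dt$ independent of $(n,k,i)$ and a ``row part'' $\cos(\rho_{nki}x)T_1+\rho_{nki}^{-1}\sin(\rho_{nki}x)T_1^\perp$. That would follow only if the second factor in the integrand of~\eqref{defD} were $\tilde\vv(x,\mu)$ (this is a typographical slip in the displayed formula); the identity that actually holds is $\tilde D(x,\la,\mu)=\int_0^x\tilde\vv(t,\la)\,\tilde\vv(t,\mu)\,dt$, with \emph{both} factors evaluated at the dummy variable $t$. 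Consequently nothing $(n,k,i)$-dependent can be pulled outside the integral, and your product representation fails. Even if it held, the argument would collapse: a pointwise product $\Psi(x)\cos(\rho_{nki}x)$ with $\Psi$ fixed is merely bounded in $(n,k,i)$, not $l_2$-small, and Proposition~\ref{prop:cos}(ii) does not apply to pointwise products---it controls Fourier-type \emph{integrals} $\int_0^\pi F(t)\cos(\rho_{nki}t)\,dt$.

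The representation one actually needs is
\[
g_{nki}(x)\,T_1=\int_0^x F(t)\cos(\rho_{nki}t)\,dt,\qquad
F(t):=\sum_{l,s,j}(-1)^j f_{lsj}\,T_{lsj}^{-1}\al'_{lsj}T_{lsj}^{-1}\,T_1\cos(\rho_{lsj}t),
\]
and analogously for $g_{nki}(x)T_1^\perp$ with sines. Here the $(n,k,i)$-dependence sits \emph{inside} the integral, so for each fixed $x$ these numbers are genuine Fourier-type coefficients of the $L_2$-function $F\cdot\mathbf 1_{[0,x]}$. Your grouping argument is precisely what is needed, but it must be applied to $F$ itself rather than to its antiderivative $\Psi$: the summation rule~\eqref{group} together with~\eqref{defxi}, \eqref{Xi} and Proposition~\ref{prop:cos}(i) gives $F\in L_2((0,\pi);\mathbb C^{m\times m})$ with $\|F\|_{L_2}\le C\|f\|_{\mathfrak B}$, after which Proposition~\ref{prop:cos}(ii) delivers the claimed $l_2$-bound, uniformly in $x$ since $\|F\cdot\mathbf 1_{[0,x]}\|_{L_2}\le\|F\|_{L_2}$.
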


\begin{proof}
Substituting~\eqref{tvv} into \eqref{defD}, we obtain
\begin{equation} \label{sm1}
\tilde D(x, \rho_{lsj}, \rho_{nki}) = T_{lsj}^{-1} \int_0^x (\cos (\rho_{lsj} t) \cos (\rho_{nki} t) \, T_1 + \sin (\rho_{lsj} t) \sin (\rho_{nki} t)  \, T_1^{\perp}) \, dx \, T_{nki}^{-1}. 
\end{equation}
For simplicity, throughout this proof we assume that $\rho_{nki} \ne 0$ and $\rho_{lsj} \ne 0$. The opposite case requires minor technical changes. Using~\eqref{defR2} and \eqref{sm1}, we derive
\begin{gather} \label{sm2}
g_{nki}(x) T_1 = \int_0^x F(t) \cos (\rho_{nki} t) \, dt, \\ \label{defF}
F(t) := \sum_{l, s, j} (-1)^j f_{lsj} T_{lsj}^{-1} \al'_{lsj} T_{lsj}^{-1} T_1 \cos (\rho_{lsj} t).
\end{gather}
Using the summation rule~\eqref{group} in \eqref{defF}, relations~\eqref{asymptla}, \eqref{defxi}, and Proposition~\ref{prop:cos}(i), we prove that $F \in L_2((0, \pi); \mathbb C^{m \times m})$ and $\| F \|_{L_2} \le C \| f \|_{\mathfrak B}$.
Applying Proposition~\ref{prop:cos}(ii) to~\eqref{sm2}, we show that $\{ \| g_{nki}(x) T_1 \| \} \in l_2$ for each fixed $x \in [0, \pi]$ and the $l_2$-norm of this sequence does not exceed $C \| F \|_{L_2}$, where $C$ does not depend on $x \in [0, \pi]$. Similar arguments are valid for $g_{nki} T_1^{\perp}$. This concludes to proof.
\end{proof}

\begin{lem} \label{lem:Rl1}
Suppose that $f \in \mathfrak B$ and $g(x) = \tilde R(x) f$. Let indices $q \in \mathcal J$, $k, s \in J_q$, $i, j \in \{ 0, 1 \}$ be fixed. Then the sequence $\{ \| g_{nki} - g_{nsj} \| \}$ belongs to $l_1$ for each fixed $x \in [0, \pi]$ and
$$
\| \{ \| g_{nki}(x) - g_{nsj}(x) \| \} \|_{l_1} \le C \| f \|_{\mathfrak B}
$$
uniformly by $x \in [0, \pi]$.
\end{lem}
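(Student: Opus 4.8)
The plan is to refine the argument used for Lemma~\ref{lem:Rl2}, gaining an extra power of $l_2$-smallness from the fact that $k,s$ lie in the same block $J_q$. First I would record the key observation: since $r_k = r_s$, the differences $\delta_n := \rho_{nki} - \rho_{nsj}$ form an $l_2$-sequence in $n$. Indeed, by~\eqref{asymptla} and~\eqref{tvv}, $\rho_{nki}$ equals $n + r_k + \varkappa_{nk}$ when $i=0$ and $n + r_k$ when $i=1$; using $r_k = r_s$, we get that $\delta_n$ equals $\varkappa_{nk} - \varkappa_{ns}$, $\varkappa_{nk}$, $-\varkappa_{ns}$, or $0$ according to the values of $(i,j)$, so $\{\delta_n\} \in l_2$ in all cases. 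Only finitely many small indices $n$ (including, possibly, $n=0$, and any $n$ for which some $\rho_{nki}$ or $\rho_{nsj}$ vanishes) need separate handling, and there I would use the crude bound $\|g_{nki}(x) - g_{nsj}(x)\| \le 2\,\|\tilde R(x)\|_{\mathfrak B\to\mathfrak B}\,\|f\|_{\mathfrak B} \le C\|f\|_{\mathfrak B}$ coming from~\eqref{estR}. So it suffices to treat large $n$, for which all relevant $\rho$'s are nonzero.

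Next I would reuse the integral representations established inside the proof of Lemma~\ref{lem:Rl2}: writing $g_{nki}(x) = g_{nki}(x)T_1 + g_{nki}(x)T_1^{\perp}$, one has
\[
g_{nki}(x)T_1 = \int_0^x F(t)\cos(\rho_{nki}t)\,dt, \qquad g_{nki}(x)T_1^{\perp} = \int_0^x \tilde F(t)\sin(\rho_{nki}t)\,dt,
\]
with $F$ as in~\eqref{defF} and $\tilde F$ its obvious analogue ($T_1 \to T_1^{\perp}$, cosines $\to$ sines), where $F,\tilde F \in L_2((0,\pi);\mathbb C^{m\times m})$ and $\|F\|_{L_2},\|\tilde F\|_{L_2} \le C\|f\|_{\mathfrak B}$. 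Subtracting, $g_{nki}(x)T_1 - g_{nsj}(x)T_1 = \int_0^x F(t)\bigl(\cos(\rho_{nki}t) - \cos(\rho_{nsj}t)\bigr)\,dt$, and I would expand about $\rho_{nsj}t$:
\[
\cos(\rho_{nki}t) - \cos(\rho_{nsj}t) = \bigl(\cos(\delta_n t) - 1\bigr)\cos(\rho_{nsj}t) - \bigl(\delta_n t\bigr)\sin(\rho_{nsj}t) - R_n(t)\sin(\rho_{nsj}t),
\]
where $|\cos(\delta_n t)-1| \le \tfrac12(\delta_n t)^2$ and $|R_n(t)| = |\sin(\delta_n t) - \delta_n t| \le \tfrac16|\delta_n t|^3$ uniformly for $t \in [0,\pi]$.

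The three resulting contributions to $g_{nki}(x)T_1 - g_{nsj}(x)T_1$ are then estimated separately. The first is $\le C\,\delta_n^2\,\|F\|_{L_2}$ and the third is $\le C\,|\delta_n|^3\,\|F\|_{L_2}$; since $\{\delta_n\} \in l_2$, the sequences $\{\delta_n^2\}$ and $\{|\delta_n|^3\}$ lie in $l_1$, so these two contributions are $l_1$ in $n$ with norm $\le C\|f\|_{\mathfrak B}$, uniformly in $x$ (because $\int_0^x \le \int_0^{\pi}$). The middle contribution is $-\delta_n\int_0^x t\,F(t)\sin(\rho_{nsj}t)\,dt$, a product of the $l_2$-sequence $\{\delta_n\}$ with the sequence $\bigl\{\int_0^x t\,F(t)\sin(\rho_{nsj}t)\,dt\bigr\}_n$; the latter is a subsequence of the one obtained by applying Proposition~\ref{prop:cos}(ii) (entrywise) to the function $t\,F(t)\mathbf 1_{[0,x]}(t) \in L_2(0,\pi)$, hence it is in $l_2$ with norm $\le C\|tF\mathbf 1_{[0,x]}\|_{L_2} \le C\|F\|_{L_2} \le C\|f\|_{\mathfrak B}$, uniformly in $x$. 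By the Cauchy--Schwarz inequality the product is in $l_1$ with the required bound. Summing the three pieces yields $\{\|g_{nki}(x)T_1 - g_{nsj}(x)T_1\|\} \in l_1$ uniformly in $x$; the estimate for the $T_1^{\perp}$-parts is verbatim, using $\sin(\rho_{nki}t)-\sin(\rho_{nsj}t) = (\cos(\delta_n t)-1)\sin(\rho_{nsj}t) + (\delta_n t + R_n(t))\cos(\rho_{nsj}t)$ and Proposition~\ref{prop:cos}(ii) with cosines applied to $t\,\tilde F(t)\mathbf 1_{[0,x]}(t)$. Adding the $T_1$- and $T_1^{\perp}$-parts completes the proof.

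\textbf{The main obstacle.} There is no deep difficulty --- the proof is essentially a sharpening of Lemma~\ref{lem:Rl2} --- but the step that must be arranged carefully is the expansion of the trigonometric difference in powers of $\delta_n$: one has to exhibit \emph{two} independent sources of $l_2$-decay, namely $\delta_n \times \delta_n$ in the $(\cos(\delta_n t)-1)$- and cubic-remainder terms, and $\delta_n$ times an $l_2$ sequence of Fourier-type coefficients in the linear term, which is precisely what upgrades the $l_2$-conclusion of Lemma~\ref{lem:Rl2} to the $l_1$-conclusion here. As in Lemma~\ref{lem:Rl2}, uniformity in $x$ is kept by applying Proposition~\ref{prop:cos}(ii) to the truncated, weighted function $t\,F(t)\mathbf 1_{[0,x]}(t)$ rather than to $F$ itself.
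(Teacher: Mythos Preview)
Your proof is correct and follows essentially the same route as the paper's. The paper compresses your three-term expansion into the single statement
\[
\cos(\rho_{nki}t)-\cos(\rho_{nsj}t)=\varkappa_n\, t\sin\bigl((n+r_k)t\bigr)+\zeta_n(t),\qquad \{\varkappa_n\}\in l_2,\ \ \sum_n \max_{t}|\zeta_n(t)|<\infty,
\]
i.e.\ it expands around $n+r_k=\rho_{nk1}$ rather than around $\rho_{nsj}$, and lumps your $(\cos(\delta_n t)-1)$-term and cubic remainder (together with the $O(l_2)$ correction coming from replacing $\sin(\rho_{nsj}t)$ by $\sin((n+r_k)t)$) into the $l_1$-summable remainder $\zeta_n$. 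The leading term is then handled exactly as you do, via Proposition~\ref{prop:cos}(ii) applied to the truncated weighted function, and Cauchy--Schwarz turns the product of two $l_2$-sequences into an $l_1$-sequence.
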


\begin{proof}
For fixed $k$, $s$, $i$, $j$ satisfying the conditions of the lemma, we have
\begin{equation} \label{difcos}
\cos (\rho_{nki} t) - \cos (\rho_{nsj} t) = \varkappa_n t \sin (n + r_k) t + \zeta_n(t), \quad \{ \varkappa_n \} \in l_2, \quad \sum_n \max_{t \in [0, \pi]}|\zeta_n(t)| \le C,
\end{equation}
where $\varkappa_n$ does not depend on $t$, $t \in [0, \pi]$. Using~\eqref{sm2}, \eqref{difcos}, and Proposition~\ref{prop:cos}, we obtain
$$
    (g_{nki}(x) - g_{nsj}(x)) T_1 = \int_0^x F(t) (\cos (\rho_{nki} t) - \cos (\rho_{nsj} t)) \, dt = \varkappa_n K_n(x) + Z_n(x),
$$
where
$$
    \| \{ \| K_n(x) \| \} \|_{l_2} \le C \| F \|_{L_2}, \quad \| Z_n(x) \| \le C \| F \|_{L_2} \max_{t \in [0, \pi]} |\zeta_n(t)|
$$
uniformly by $x \in [0, \pi]$. Taking the estimate $\| F \|_{L_2} \le C \| f \|_{\mathfrak B}$ into account, we obtain the assertion of the lemma for the sequence $\{ \| (g_{nki}(x) - g_{nsj}(x)) T_1 \| \}$. The sequence $\{ \| (g_{nki}(x) - g_{nsj}(x)) T_1^{\perp} \| \}$ can be studied similarly.
\end{proof}

\section{Solvability of main equation}

In this section, we suppose that the collection $\{ \la_{nk}, \al_{nk} \}_{(n, k) \in J}$ satisfies the conditions of Theorem~\ref{thm:nsc} and prove the unique solvability of the main equation~\eqref{main}.

\begin{thm} \label{thm:solve}
For each fixed $x \in [0, \pi]$, the operator $(\mathcal I + \tilde R(x)) \colon \mathfrak B \to \mathfrak B$ has a bounded inverse, so the main equation~\eqref{main} has a unique solution $\phi(x) \in \mathfrak B$.
\end{thm}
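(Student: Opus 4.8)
The standard strategy for this type of theorem in the method of spectral mappings is to prove that $\mathcal I + \tilde R(x)$ is a Fredholm operator of index zero, and then show that its kernel is trivial; invertibility follows. Since Theorem~\ref{thm:RB} already guarantees that $\tilde R(x)$ is the norm-limit of finite-dimensional operators, $\tilde R(x)$ is compact, hence $\mathcal I + \tilde R(x)$ is Fredholm of index zero. Therefore the whole proof reduces to showing that the homogeneous equation
$$
\gamma(x)(\mathcal I + \tilde R(x)) = 0, \qquad \gamma(x) \in \mathfrak B,
$$
has only the trivial solution $\gamma(x) = 0$ for each fixed $x \in [0, \pi]$.

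**Main steps.** First I would fix $x$ and suppose $\gamma = \gamma(x) \in \mathfrak B$ satisfies $\gamma(\mathcal I + \tilde R(x)) = 0$. Writing this out componentwise via~\eqref{defR1}--\eqref{defR2} gives, for each $(n,k) \in J$ and $i = 0,1$,
$$
\gamma_{nki} + \sum_{(l,s) \in J} (-1)^j \sum_{j} \gamma_{lsj} T_{lsj}^{-1} \al'_{lsj} \tilde D(x, \la_{lsj}, \la_{nki}) T_{nki} = 0
$$
(interpreted with the same ``grouped'' convergence as in Lemma~\ref{lem:cont}). Next, I would exploit the analytic structure: introduce the matrix function
$$
\Psi(x, \la) := \sum_{(l,s) \in J} \sum_{j} (-1)^j \gamma_{lsj} T_{lsj}^{-1} \al'_{lsj} \tilde D(x, \la_{lsj}, \la),
$$
so the homogeneous equation reads $\gamma_{nki} T_{nki}^{-1} + \Psi(x, \la_{nki}) = 0$ after right-multiplication by $T_{nki}^{-1}$, i.e.\ $\gamma$ is recovered from $\Psi$ at the points $\la_{nki}$. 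Using the estimates of Lemma~\ref{lem:Rl2} (applied with $f = \gamma$), the sequence $\{\|\gamma_{nki}(x)\|\}$ belongs to $l_2$ — this is a strict gain of decay over the generic element of $\mathfrak B$. Then I would form the vector function
$$
\gamma_0(x, t) := \sum_{(l,s) \in J} \sum_j (-1)^j \gamma_{lsj} T_{lsj}^{-1} \al'_{lsj} \left( \cos(\rho_{lsj} t) T_1 + \sin(\rho_{lsj} t) T_1^{\perp} \right),
$$
which lies in $L_2((0,\pi); \mathbb{C}^{m \times m})$ by Proposition~\ref{prop:cos}(i) together with the summation rule~\eqref{group} (exactly as in the proof of Lemma~\ref{lem:Rl2}), since $\{\|\gamma_{nki}\|\} \in l_2$ and $\{\xi_n\} \in l_2$. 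The homogeneous main equation, integrated in $t$ as in~\eqref{sm2}, then forces $\gamma_0(x,t)$ to be orthogonal (row by row) to every element of the sequence $\mathcal Y$ defined in~\eqref{defY}: the weight-matrix structure in Proposition~\ref{prop:asymptal} lets one rewrite the inner products $\int_0^x \gamma_0(x,t)\,\overline{Y_{nk}(t)}\,dt$ in terms of $\gamma$ and the Res-data, and the homogeneous relations kill them all. Since, by Proposition~\ref{prop:Riesz}, $\mathcal Y$ is a Riesz basis (hence complete) in $L_2((0,\pi); \mathbb{C}^{m \times m})$, completeness yields $\gamma_0(x, \cdot) \equiv 0$, and then — reading off the Fourier-type coefficients against the Riesz basis, or directly from $\gamma_{nki} = -\Psi(x,\la_{nki}) T_{nki}$ with $\Psi$ built from $\gamma_0$ — we get $\gamma_{nki}(x) = 0$ for all indices, i.e.\ $\gamma(x) = 0$. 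This proves injectivity, and with Fredholmness of index zero we conclude that $\mathcal I + \tilde R(x)$ is boundedly invertible; uniform boundedness of the inverse in $x$ follows from the uniform-in-$x$ estimates in Section~4 (the operator norm of $\tilde R(x)$ is bounded by $C\Xi$ independently of $x$, and the finite-dimensional approximants converge uniformly), together with a compactness/continuity argument in $x$.

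**Main obstacle.** The delicate point is the passage from the abstract homogeneous equation in $\mathfrak B$ to a genuine orthogonality statement against the Riesz basis $\mathcal Y$ in $L_2((0,\pi); \mathbb{C}^{m \times m})$. One must (a) upgrade the a priori regularity of the solution $\gamma$ from ``$\mathfrak B$-bounded'' to ``$l_2$-summable norms'' using Lemma~\ref{lem:Rl2}, so that the auxiliary $L_2$-function $\gamma_0(x,\cdot)$ is well defined; (b) correctly match the grouped convergence of the series in the main equation with the ordinary $L_2$-convergence delivered by Proposition~\ref{prop:cos}, which is where the summation-by-parts identity~\eqref{group} and the estimates of Lemma~\ref{lem:estD} do the real work; and (c) translate the vanishing of $\gamma_0$ back into the vanishing of every $\gamma_{nki}$, handling the multiple-eigenvalue groups and the weight-matrix ranks (Propositions~\ref{prop:asymptal}, \ref{prop:complete}) so that no information is lost when one restricts to a basis of each range space. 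Everything after $\gamma_0 \equiv 0$ is bookkeeping; the crux is that condition (iii) of Theorem~\ref{thm:nsc}, via Proposition~\ref{prop:Riesz}, is precisely the input that makes this orthogonality argument close.
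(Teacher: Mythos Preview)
Your Fredholm reduction via Theorem~\ref{thm:RB} is correct, and the idea of upgrading $\gamma$ from $\mathfrak B$-bounded to $l_2$-summable via Lemma~\ref{lem:Rl2} is the right first move (the paper does the same, and also uses Lemma~\ref{lem:Rl1} to get $l_1$-control on differences). The gap is at your step~(5): the homogeneous equation does \emph{not} give orthogonality of $\gamma_0(x,\cdot)$ to the $Y_{nk}$. If you actually carry out the computation you describe, you get
\[
\int_0^x \gamma_0(x,t)\,\tilde\phi_{nk0}(t)\,dt
=\sum_{l,s,j}(-1)^j\gamma_{lsj}T_{lsj}^{-1}\al'_{lsj}\tilde D(x,\la_{lsj},\la_{nk0})T_{nk0}
=(\gamma\tilde R(x))_{nk0}=-\gamma_{nk0},
\]
so the pairing equals $-\gamma_{nk0}\,\overline{\mathcal E_{nk}}$, precisely the quantity you are trying to show vanishes. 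The homogeneous relation is a fixed-point identity, not an orthogonality statement, and no amount of rewriting via Proposition~\ref{prop:asymptal} changes this.

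What the paper does instead---and this is the missing idea---is a contour-integration/positivity argument. One introduces, alongside the entire function $\ga(x,\la)$ (your $-\Psi$), a second function $\Gamma(x,\la)$ built from the \emph{Weyl solution} $\tilde\Phi$ rather than $\tilde\vv$, and forms $\mathscr B(x,\la)=\Gamma(x,\la)(\ga(x,\overline\la))^{\dagger}$. The growth estimates of Lemma~\ref{lem:estD} (together with the $l_2$ and $l_1$ controls on $\beta$) force $\oint_{\Upsilon_{N_k+r}}\mathscr B(x,\la)\,d\la\to 0$ along a subsequence of radii, while the Residue Theorem identifies this integral with $\sum_{(n,k)}\ga(x,\la_{nk})\al'_{nk}(\ga(x,\la_{nk}))^{\dagger}$. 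Since each $\al'_{nk}\ge 0$, every term vanishes, yielding $\ga(x,\la_{nk})\al_{nk}=0$. \emph{This} is the genuine orthogonality relation; combined with a Paley--Wiener representation of $\ga(x,\la)$ and the completeness of $\mathcal X$ (condition~(iii)), it gives $\ga\equiv 0$ and hence $\gamma=0$. Note that the non-negativity $\al_{nk}\ge 0$ from condition~(i) is used in an essential way here; your outline never invokes it. Also, the relevant completeness is that of $\mathcal X$ on $(0,\pi)$ (with $h(x,\cdot)$ extended by zero past $t=x$), not a Riesz-basis property of $\mathcal Y$.
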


\begin{proof}
Fix $x \in [0, \pi]$. By virtue of Theorem~\ref{thm:RB}, the operator $\tilde R(x)$ can be approximated by finite-dimensional operators. Therefore, in view of Fredholm's Theorem, it suffices to prove that the homogeneous equation
\begin{equation} \label{homo}
 \beta(x) (\mathcal I + \tilde R(x)) = 0, \quad \beta(x) = \{ \beta_n(x) \}_{n = 1}^{\iy} \in \mathfrak B,
\end{equation}
has the only solution $\beta(x) = 0$ in $\mathfrak B$. Since $\beta(x) = -\tilde R(x) \beta(x)$, Lemmas~\ref{lem:Rl2} and~\ref{lem:Rl1} imply
\begin{equation} \label{betaseq}
\{ \| \beta_{nki}(x) \| \} \in l_2, \quad \{ \| \beta_{nki}(x) - \beta_{nsj}(x) \|\} \in l_1,
\end{equation}
for fixed $k, s \in J_q$, $q \in \mathcal J$, $i, j \in \{ 0, 1 \}$. Introduce the matrix functions:
\begin{align} \label{defga}
    \ga(x, \la) & := -\sum_{l, s, j} (-1)^j \beta_{lsj}(x) T_{lsj}^{-1} \al'_{lsj} \tilde D(x, \la_{lsj}, \la), \\ \label{defGa}
    \Gamma(x, \la) & := -\sum_{l, s, j} (-1)^j \beta_{lsj}(x) T_{lsj}^{-1} \al'_{lsj} \tilde E(x, \la_{lsj}, \la), \\ \nonumber
    \tilde E(x, \la, \mu) & := \frac{\langle \tilde \vv(x, \la), \tilde \Phi(x, \mu)\rangle}{\la - \mu}, \\ \label{funcB}
    \mathscr B(x, \la) & := \Gamma(x, \la) (\ga(x, \overline{\la}))^{\dagger}.
\end{align}
In \eqref{defga} and~\eqref{defGa}, the indices $(l, s, j)$ run over the set: $(l, s) \in J$, $j = 0, 1$. The matrix function $\ga(x, \la)$ is entire in $\la$, while $\Gamma(x, \la)$ and $\mathscr B(x, \la)$ are meromorphic in $\la$ with the simple poles $\{ \la_{nki} \}$. Relation~\eqref{homo} implies $\ga(x, \la_{nki}) = \beta_{nki}(x) T_{nki}^{-1}$, $(n, k) \in J$, $i = 0, 1$. Calculations show that
\begin{equation} \label{ResB}
\Res_{\la = \la_{nk0}} \mathscr B(x, \la) = \ga(x, \la_{nk0}) \al_{nk0} (\ga(x, \la_{nk0}))^{\dagger}, \quad \Res_{\la = \la_{nk1}} \mathscr B(x, \la) = 0
\end{equation}
if $\la_{nk0} \ne \la_{ls1}$, $(n, k), (l, s) \in J$. The opposite case requires minor changes.

Using~\eqref{defga}, the summation rule~\eqref{group}, \eqref{betaseq}, \eqref{defxi}, and Lemma~\ref{lem:estD}, we obtain
\begin{equation} \label{estga}
\| \ga(x, \la) (T_1 + \rho T_1^{\perp}) \| \le C(x) \exp(|\mbox{Im}\,\rho|x) \sum_{n = 1}^{\iy} \frac{\theta_n}{|\rho - m_n| + 1}.
\end{equation}
Here and below, $\rho = \sqrt{\la}$, $\arg \rho \in [-\tfrac{\pi}{2}, \tfrac{\pi}{2})$, the notation $\{ \theta_n \}$ stands for various $l_1$-sequences of non-negative numbers. Analogously to~\eqref{estga}, we get
\begin{equation} \label{estGa}
    \| \Gamma(x, \la) (\rho T_1 + T_1^{\perp}) \| \le C(x) \exp(-|\mbox{Im}\,\rho|x) \sum_{n = 1}^{\iy} \frac{\theta_n}{|\rho - m_n| + 1}, \quad \rho \in G_{\de}, \quad |\rho| \ge \rho^*,
\end{equation}
where
$$
G_{\de} := \{ \rho \in \mathbb C \colon |\rho - (n + r_k)| \ge \de, n \in \mathbb Z, k = \overline{1, m} \},
$$
$\de$ and $\rho^*$ are some positive reals. Suppose that $\la \in \Upsilon_{N+r}$, $\Upsilon_{N+r} := \{ \la \in \mathbb C \colon |\la| = (N + r)^2 \}$, where $N \in \mathbb N$, $r$ is fixed, $r \ne r_k$, $k = \overline{1, m}$. Using~\eqref{funcB}, \eqref{estga}, and~\eqref{estGa}, we obtain
$$
\| \mathscr B(x, \la) \| \le \frac{C(x)}{N} \left( \sum_{n = 1}^{\iy} \frac{\theta_n}{|N - n| + 1}\right)^2.
$$
Consequently,
$$
\| \mathscr B(x, \la) \| \le \frac{C(x)}{N} f_N, \quad f_N := \sum_{n = 1}^{\iy} \frac{\theta_n}{(N - n + 1/2)^2}, \quad \la \in \Upsilon_{N + r}.
$$
Obviously, $\{ f_N \} \in l_1$. This implies
$$
\varliminf_{N \to \iy} \frac{f_N}{1/N} = 0.
$$
Hence, there exists a sequence $\{ N_k \}$ such that 
$$
\max_{\la \in \Upsilon N_k}\mathscr B(x, \la) = o(N_k^{-2}), \quad k \to \iy.
$$
Therefore,
$$
\lim_{k \to \iy} \oint_{\Upsilon_{N_k + r}} \mathscr B(x, \la) \, d\la = 0.
$$
Using the Residue Theorem and~\eqref{ResB}, we show that
$$
\sum_{(n, k) \in J} \ga(x, \la_{nk}) \al'_{nk} (\ga(x, \la_{nk}))^{\dagger} = 0.
$$
Since $\al'_{nk} = (\al'_{nk})^{\dagger} \ge 0$, we get 
\begin{equation} \label{sm3}
\ga(x, \la_{nk}) \al_{nk} = 0, \quad (n, k) \in J.
\end{equation}

It is easy to see that the matrix function $\ga(x, \rho^2) T_1$ is even and $\ga(x, \rho^2) \rho T_1^{\perp}$ is odd. It follows from~\eqref{estga}, \eqref{defga}, \eqref{betaseq}, \eqref{defxi}, \eqref{sm1}, and Proposition~\ref{prop:cos} that these matrix functions are $O(\exp(|\mbox{Im}\,\rho|x))$ and belong to $L_2(\mathbb R; \mathbb C^{m \times m})$. Applying the Paley-Wiener Theorem, we obtain the representation
\begin{equation} \label{sm4}
\ga(x, \la) = \int_0^{\pi} (h(x, t))^{\dagger} \left( \cos \rho t \, T_1 + \frac{\sin \rho t}{\rho} \, T_1^{\perp} \right) \, dt  \quad h(x, .) \in L_2((0, \pi); \mathbb C^m). 
\end{equation}
Combining~\eqref{sm3} and~\eqref{sm4}, we get $(h, X_{nk}) = 0$ for each fixed $x \in [0, \pi]$ and for all $(n, k) \in J$. Since the sequence $\mathcal X = \{ X_{nk} \}_{(n, k) \in J}$ is complete in $L_2((0, \pi); \mathbb C^m)$, it follows that $h = 0$ in $L_2((0, \pi); \mathbb C^m)$ for each fixed $x \in [0, \pi]$. Consequently, $\ga(x, \la) \equiv 0$ and $\beta(x) = 0$, so the homogeneous equation~\eqref{homo} has the unique solution in $\mathfrak B$. This yields the claim.
\end{proof}

\section{Proof of sufficiency}

In this section, we prove the sufficiency part of Theorem~\ref{thm:nsc}. Let $\{ \la_{nk}, \al_{nk} \}_{(n, k) \in J}$ be a collection satisfying the conditions of Theorem~\ref{thm:nsc}. Suppose that the Banach space $\mathfrak B$, the element $\tilde \phi(x) \in B$, and the operator $\tilde R(x) \colon \mathfrak B \to \mathfrak B$ for each fixed $x \in [0, \pi]$ are constructed in accordance with Section~3. By virtue of Theorem~\ref{thm:solve}, the main equation~\eqref{main} has the unique solution $\phi(x) \in B$ for each fixed $x \in [0, \pi]$. Similarly to \cite[Lemma~5.3]{Bond-scal}, we obtain the following result.

\begin{lem} \label{lem:psi}
The elements $\{ \phi_{nki}(x) \}$ of $\phi(x)$ can be represented in the form
$$
\phi_{nki}(x) = \cos (n + r_k) x \, T_1 + \sin (n + r_k) x \, T_1^{\perp} + \psi_{nki}(x), \quad (n, k) \in J, \quad i = 0, 1,
$$
where the matrix functions $\psi_{nki}$ are continuous on $[0, \pi]$, the sequence $\{ \| \psi_{nki}(x) \| \}$ belongs to $l_2$ for each fixed $x \in [0, \pi]$,  and the $l_2$-norm of this sequence is uniformly bounded by $x \in [0, \pi]$.
\end{lem}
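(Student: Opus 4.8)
The plan is to extract the representation directly from the main equation~\eqref{main}, rewritten as $\phi(x) = \tilde\phi(x) - \phi(x)\tilde R(x)$, by splitting off the ``model'' part of $\tilde\phi(x)$ and showing that what remains is an $l_2$-remainder.

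First I would establish that $\phi(x)$ is bounded in $\mathfrak B$ uniformly in $x \in [0,\pi]$ and that $x \mapsto \phi(x)$ is a continuous $\mathfrak B$-valued function. The uniform bound $\sup_x \|\tilde\phi(x)\|_{\mathfrak B} < \infty$ is obtained exactly as in Section~3 from the explicit formula~\eqref{tvv}. For the inverse, Theorem~\ref{thm:solve} gives boundedness of $(\mathcal I + \tilde R(x))^{-1}$ for each fixed $x$; since $x \mapsto \tilde R(x)$ is continuous in the $\|\cdot\|_{\mathfrak B \to \mathfrak B}$-norm --- which follows from the definitions~\eqref{defR1}--\eqref{defR2}, the continuity of $\tilde D(x,\cdot,\cdot)$ in $x$, and the uniform estimate~\eqref{estRkn} permitting termwise passage to the limit --- the family $(\mathcal I + \tilde R(x))^{-1}$ is operator-norm continuous on the compact interval $[0,\pi]$, hence uniformly bounded. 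Consequently $\sup_x \|\phi(x)\|_{\mathfrak B} \le C$, the map $x \mapsto \phi(x)$ is continuous, and in particular each component $\phi_{nki}(x)$ is continuous on $[0,\pi]$.

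Next I would analyze $\tilde\phi(x)$ componentwise. From~\eqref{tvv} and the definition of $T_{nki}$ one gets $\tilde\phi_{nki}(x) = \cos(\rho_{nki}x)\,T_1 + \sin(\rho_{nki}x)\,T_1^\perp$ when $\rho_{nki}\ne 0$ and $\tilde\phi_{nki}(x) = T_1 + x T_1^\perp$ when $\rho_{nki}=0$. Using $\rho_{nki} = n + r_k + \varkappa_{nki}$ with $\{\varkappa_{nki}\}\in l_2$ from~\eqref{asymptla}, together with the elementary bound $|\cos(\rho_{nki}x) - \cos((n+r_k)x)| \le x\,|\varkappa_{nki}|$ and its sine analogue, and observing that the exceptional case $\rho_{nki}=0$ (or $n+r_k=0$) occurs only for the finitely many indices in $G_1$, I would write $\tilde\phi_{nki}(x) = \cos(n+r_k)x\,T_1 + \sin(n+r_k)x\,T_1^\perp + \tilde\psi_{nki}(x)$, where the $\tilde\psi_{nki}$ are continuous on $[0,\pi]$ and $\{\|\tilde\psi_{nki}(x)\|\}\in l_2$ with $l_2$-norm bounded uniformly in $x$.

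Finally, set $g(x) := \phi(x)\tilde R(x)$. Applying Lemma~\ref{lem:Rl2} with $f = \phi(x)$ gives $\{\|g_{nki}(x)\|\}\in l_2$ with $\|\{\|g_{nki}(x)\|\}\|_{l_2}\le C\|\phi(x)\|_{\mathfrak B}\le C$ uniformly in $x$, while continuity of $x\mapsto g(x)$ in $\mathfrak B$ follows from continuity of $x\mapsto\phi(x)$ and of $x\mapsto\tilde R(x)$, so each $g_{nki}(x)$ is continuous on $[0,\pi]$. Since the main equation reads $\phi_{nki}(x) = \tilde\phi_{nki}(x) - g_{nki}(x)$, the choice $\psi_{nki}(x) := \tilde\psi_{nki}(x) - g_{nki}(x)$ yields the asserted representation. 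The main obstacle --- the only point requiring care beyond the routine estimates of Section~4 --- is the uniform-in-$x$ boundedness of $(\mathcal I + \tilde R(x))^{-1}$, which is needed so that Lemma~\ref{lem:Rl2} supplies a constant independent of $x$; this is handled via the operator-norm continuity of $x\mapsto\tilde R(x)$ and compactness of $[0,\pi]$, as indicated above.
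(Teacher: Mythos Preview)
Your proposal is correct and follows essentially the same route as the paper (which only cites \cite[Lemma~5.3]{Bond-scal}): rewrite $\phi(x)=\tilde\phi(x)-\phi(x)\tilde R(x)$, split the model part of $\tilde\phi(x)$ using the asymptotics~\eqref{asymptla}, and bound the remainder via Lemma~\ref{lem:Rl2} combined with the uniform bound $\sup_x\|\phi(x)\|_{\mathfrak B}<\infty$ obtained from the operator-norm continuity of $x\mapsto\tilde R(x)$ and Theorem~\ref{thm:solve}. One cosmetic remark: in the statement of Lemma~\ref{lem:Rl2} the paper writes $g(x)=\tilde R(x)f$, but since operators act on the right this should be read as $g(x)=f\tilde R(x)$, exactly as you use it.
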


Construct the matrix function $\sigma(x)$ and the matrix $H_2$ as follows:
\begin{align} \label{defsi}
    \sigma(x) := & -2 \sum_{n = 1}^{\iy} \Biggl( \sum_{\rho_{lsj} \in G_n} (-1)^j \phi_{lsj}(x) T_{lsj}^{-1} \al'_{lsj} T_{lsj}^{-1} \tilde \phi_{lsj}(x) - \frac{1}{2} \bigl( T_1 \hat \al(G_n) T_1 + T_1^{\perp} \hat \al(G_n) T_1^{\perp}\bigr)
    \Biggr)\\ \label{defH2}
    H_2 := & T_2 \sum_{n = 1}^{\iy} \Biggl( \sum_{\rho_{lsj} \in G_n} (-1)^j \phi_{lsj}(\pi) T_{lsj}^{-1} \al'_{lsj} T_{lsj}^{-1} \tilde \phi_{lsj}(\pi) - \bigl( T_1 \hat \al(G_n) T_1 + T_1^{\perp} \hat \al(G_n) T_1^{\perp}\bigr)
    \Biggr) T_2,
\end{align}
where $\hat \al(G_n)$ is defined by~\eqref{defalG}.

Relying on Lemmas~\ref{lem:Rl1} and~\ref{lem:psi}, Proposition~\ref{prop:cos}, and relations~\eqref{defxi}, \eqref{Xi}, we prove the following lemma.

\begin{lem} \label{lem:L2}
The series~\eqref{defsi} and~\eqref{defH2} converge in $L_2((0, \pi); \mathbb C^{m \times m})$ and $\mathbb C^{m \times m}$, respectively.    
\end{lem}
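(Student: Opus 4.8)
The plan is to show that each of the two series has an absolutely square-summable (respectively summable) block structure after applying the summation-by-parts identity~\eqref{group} within each group $G_n$. First I would rewrite the general term of~\eqref{defsi} using Lemma~\ref{lem:psi}, splitting $\phi_{lsj}(x) = \cos(n+r_k)x\,T_1 + \sin(n+r_k)x\,T_1^{\perp} + \psi_{lsj}(x)$ and similarly for $\tilde\phi_{lsj}(x)$. The subtracted term $\tfrac12(T_1\hat\al(G_n)T_1 + T_1^{\perp}\hat\al(G_n)T_1^{\perp})$ is precisely designed to cancel the ``diagonal'' contribution that arises when we replace both $\phi_{lsj}$ and $\tilde\phi_{lsj}$ by the leading trigonometric terms and use $\cos^2 + \sin^2$ together with the fact that the leading terms of $\phi$ and $\tilde\phi$ coincide for indices in the same group $G_n$. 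After this cancellation, the $n$-th block of~\eqref{defsi} is a sum of cross terms, each containing at least one factor of $\psi_{lsj}$ or at least one difference $\cos(\rho_{lsj}t) - \cos(n+r_k)t$, which by~\eqref{asymptla} is $O(\varkappa_{lsj})$ with $\{\varkappa_{lsj}\}\in l_2$, or a difference of leading terms between distinct $r_k$-classes inside $G_n$, controlled by $\xi_n$.

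Next I would estimate the $n$-th block. Using the summation rule~\eqref{group} with $b_u = (-1)^j T_{lsj}^{-1}\al'_{lsj}T_{lsj}^{-1}$ (whose block sum is $\hat\al(G_n)$, of norm $\le \xi_n$ by~\eqref{defxi}), $a_u$ built from the $\phi$-factors, and $c_u$ built from the $\tilde\phi$-factors, the differences $a_u - a_1$ and $c_u - c_1$ produce factors $|\rho_{lsj} - \rho_{l's'j'}| \le \xi_n$ up to constants, while the remaining ``constant'' piece is handled by the cancellation described above. Combining with Lemma~\ref{lem:psi} (which gives $\{\|\psi_{nki}(x)\|\}\in l_2$ uniformly in $x$), Lemma~\ref{lem:Rl1}, and Proposition~\ref{prop:cos}, each block gets an $L_2(0,\pi)$-bound of the form $C(\xi_n \eta_n + \zeta_n)$ where $\{\xi_n\}, \{\eta_n\}, \{\zeta_n\}\in l_2$; by Cauchy--Schwarz the series $\sum_n \xi_n\eta_n$ converges, and $\sum_n \zeta_n^2 < \iy$ gives $L_2$-convergence of the trigonometric remainder via Proposition~\ref{prop:cos}(i). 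Hence~\eqref{defsi} converges in $L_2((0,\pi);\mathbb C^{m\times m})$. The analysis of~\eqref{defH2} is the same specialization to $x = \pi$: at $x = \pi$ the values $\phi_{lsj}(\pi)$, $\tilde\phi_{lsj}(\pi)$ are well-defined numbers, the block norms are again $O(\xi_n\eta_n + \zeta_n)$ with $l_2$-sequences, and since $\{\xi_n\eta_n\}\in l_1$ and $\{\zeta_n\}\in l_1$ is not automatic but $\{\zeta_n^2\}$-summability combined with the $l_1$-estimate from Lemma~\ref{lem:Rl1} at the single point $x=\pi$ yields absolute convergence of the numerical series; conjugation by $T_2$ only multiplies norms by a constant.

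I expect the main obstacle to be the bookkeeping in the cancellation of the ``diagonal'' term: one must check carefully that the subtracted quantity $\tfrac12(T_1\hat\al(G_n)T_1 + T_1^{\perp}\hat\al(G_n)T_1^{\perp})$ exactly matches the leading-order part of $\sum_{\rho_{lsj}\in G_n} (-1)^j \phi_{lsj}(x) T_{lsj}^{-1}\al'_{lsj}T_{lsj}^{-1}\tilde\phi_{lsj}(x)$, i.e. the part obtained by replacing $\phi_{lsj}, \tilde\phi_{lsj}$ by their common leading terms and using that within $G_n$ all $\rho_{lsj}$ with a fixed $r_k$-class share the same integer part $n$. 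This requires the identity $(\cos\theta x\,T_1 + \sin\theta x\,T_1^{\perp})(T_1 + \theta T_1^{\perp})\cdot(T_1+\theta T_1^{\perp})^{-1}\al'(T_1+\theta T_1^{\perp})^{-1}\cdot(T_1+\theta T_1^{\perp})(\cos\theta x\,T_1 + \sin\theta x\,T_1^{\perp})$ reducing, after using $T_1^2 = T_1$, $(T_1^{\perp})^2 = T_1^{\perp}$, $T_1 T_1^{\perp} = 0$, to $\cos^2\theta x\, T_1\al'T_1 + \sin^2\theta x\, T_1^{\perp}\al'T_1^{\perp}$ plus cross terms in $T_1\al'T_1^{\perp}$ that are $O(\xi_n)$ after summation-by-parts; then $\cos^2 + \sin^2 = 1$ does not appear directly, but the $x$-dependent oscillating parts $\cos^2\theta x - \tfrac12 = \tfrac12\cos 2\theta x$ and $\sin^2\theta x - \tfrac12 = -\tfrac12\cos 2\theta x$ are absorbed into the $\{\zeta_n\}$-term via Proposition~\ref{prop:cos}. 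Once this algebraic reduction is verified, the rest is the routine $l_2$-summation argument sketched above, exactly as in \cite[Lemma~5.3 and its consequences]{Bond-scal}.
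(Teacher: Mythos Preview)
Your proposal is essentially correct and follows the same approach as the paper, which in fact gives no detailed proof and merely lists the ingredients (Lemmas~\ref{lem:Rl1} and~\ref{lem:psi}, Proposition~\ref{prop:cos}, relations~\eqref{defxi} and~\eqref{Xi}); you invoke precisely these and flesh out the algebra of the cancellation correctly.

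One point deserves a sharper statement. You describe the analysis of~\eqref{defH2} as ``the same specialization to $x=\pi$'', but note that the subtracted constant in~\eqref{defH2} is the \emph{full} $D_n = T_1\hat\al(G_n)T_1 + T_1^{\perp}\hat\al(G_n)T_1^{\perp}$, not $\tfrac12 D_n$ as in~\eqref{defsi}; and the required mode of convergence is pointwise in $\mathbb C^{m\times m}$, not $L_2$. So Proposition~\ref{prop:cos}, which was your rescue for the oscillating $\tfrac12\cos 2\theta x$ remainder in $\sigma$, is no longer available. You correctly sense this (``$\{\zeta_n\}\in l_1$ is not automatic'') and correctly point to Lemma~\ref{lem:Rl1}. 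To make the argument precise: write $\phi_{lsj}(\pi) = \tilde\phi_{lsj}(\pi) + (\phi_{lsj}-\tilde\phi_{lsj})(\pi)$ and recall that $\phi-\tilde\phi = -\phi\tilde R(\pi)$; then Lemma~\ref{lem:Rl1} gives that the \emph{differences} $(\phi_{lsj}-\tilde\phi_{lsj})(\pi)-(\phi_{ls'j'}-\tilde\phi_{ls'j'})(\pi)$ within a group $G_n$ form an $l_1$-sequence over $n$, and Lemma~\ref{lem:Rl2} gives that the values themselves are $l_2$. Combined with the summation-by-parts~\eqref{group} (applied twice: once to extract $\hat\al(G_n)$, once more inside the residual to pair the $l_2$ factor $\|(\phi-\tilde\phi)_{lsj}\|$ against another $l_2$ factor $\xi_n$), this yields the needed $l_1$-summability of the $n$-th block. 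Your sketch has all the pieces; just make explicit that for~\eqref{defH2} the $l_1$-control comes from the \emph{product} of two $l_2$-sequences rather than from Proposition~\ref{prop:cos}.
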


Thus, we have constructed $\sigma(x)$ and $H_2$ by formulas~\eqref{defsi} and~\eqref{defH2}, respectively. Consider the corresponding boundary value problem $L = L(\sigma, T_1, T_2, H_2)$ of the form~\eqref{eqv}-\eqref{bc}. It remains to prove the following theorem.

\begin{thm} \label{thm:sd}
The values $\{ \la_{nk}, \al_{nk} \}_{(n, k) \in J}$ are the spectral data of $L$.
\end{thm}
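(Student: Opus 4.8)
\textbf{Proof plan for Theorem~\ref{thm:sd}.}

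The strategy is the standard ``reverse engineering'' argument of the method of spectral mappings: starting from the solution $\phi(x)$ of the main equation, one reconstructs the solution $\vv(x,\la)$ of equation~\eqref{eqv} with the recovered $\sigma$, then checks that it satisfies the boundary conditions, and finally identifies the spectral data. First I would introduce, for each fixed $\la\in\mathbb C$, the element $\tilde\phi(x,\la)\in\mathfrak B$ and the function
$$
\vv(x,\la) := \tilde\vv(x,\la) + \sum_{(l,s)\in J} \bigl( \vv(x,\la_{ls0})\al'_{ls0}\tilde D(x,\la_{ls0},\la) - \vv(x,\la_{ls1})\al'_{ls1}\tilde D(x,\la_{ls1},\la)\bigr),
$$
where on the right-hand side $\vv(x,\la_{lsj})$ now means $\phi_{lsj}(x)T_{lsj}^{-1}$ (the reconstructed values), so that $\vv(x,\la_{nki}) = \phi_{nki}(x)T_{nki}^{-1}$ is consistent with the main equation~\eqref{main}. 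Using the estimates of Section~4 (Lemma~\ref{lem:estD}, the $l_1$/$l_2$ properties from Lemmas~\ref{lem:Rl2}--\ref{lem:Rl1}, and Lemma~\ref{lem:psi}) one shows this series converges, defines $\vv(x,\la)$ and $\vv^{[1]}(x,\la)$ as functions with $\vv,\vv^{[1]}\in AC$, $(\vv^{[1]})'\in L_2$, entire in $\la$, and with the correct asymptotics matching~\eqref{asymptvv}.

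Next I would verify that $\vv(x,\la)$ solves $\ell\vv = \la\vv$ with the recovered $\sigma$ from~\eqref{defsi} and satisfies the initial conditions $\vv(0,\la)=T_1$, $\vv^{[1]}(0,\la)=T_1^\perp$. The initial conditions follow by setting $x=0$ in the series representation, since $\tilde D(0,\cdot,\cdot)=0$ and $\tilde\vv(0,\la)=T_1$. For the differential equation, the key computation is to differentiate the series termwise and match it against the defining formula~\eqref{defsi} for $\sigma$: one forms the ``discrepancy'' $\ell\vv - \la\vv$ and shows, using $\langle\tilde\vv(x,\la),\tilde\vv(x,\mu)\rangle$ relations and the summation-by-parts rule~\eqref{group}, that it vanishes precisely because of the choice of $\sigma(x)$. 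Here difficulty~3 from the introduction enters: $\sigma$ is only an $L_2$ series, so one cannot substitute directly; the rigorous route is to approximate by smooth matrix functions $\sigma^N$ (obtained by truncating~\eqref{defsi}), do the calculation for each $N$ where everything is classical, and pass to the limit using the $L_2$-convergence from Lemma~\ref{lem:L2} and the uniform bounds on $\phi(x),\psi_{nki}(x)$.

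Then the boundary condition at $x=\pi$ and the identification of the spectral data: I would show $V_2(\vv(\cdot,\la)) = \Delta(\la)$ has zeros exactly at $\{\la_{nk}\}$ with the right multiplicities, using the recovered $H_2$ from~\eqref{defH2} in the same approximation-and-limit spirit, and that the characteristic function factors through $W^0(\rho)$ as in Proposition~\ref{prop:asymptla}; the hypothesis that the asymptotics~\eqref{asymptla} hold for the given data guarantees the counting matches. Finally, one constructs the Weyl solution $\Phi(x,\la)$ and Weyl matrix $M(\la)$ for the recovered $L$ and computes $\Res_{\la=\la_{nk}}M(\la)$, checking it equals $\al_{nk}$; this last step uses~\eqref{defH2} together with the representation of $\al_{nk}$ in terms of $\vv(\cdot,\la_{nk})$ and the completeness of $\mathcal X$ (condition (iii)) to pin down the residues. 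The main obstacle I expect is the differential-equation verification: carefully justifying termwise differentiation of the series for $\vv$ and the passage to the limit $\sigma^N\to\sigma$ while controlling the quasi-derivative $\vv^{[1]}=\vv'-\sigma\vv$ — this is exactly where the singular potential forces the extra precision of Lemmas~\ref{lem:Rl2}--\ref{lem:Rl1} and the smoothing argument, and it is the technical heart of the proof.
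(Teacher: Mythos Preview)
Your plan has a structural gap in the approximation step. You propose to obtain smooth $\sigma^N$ by ``truncating~\eqref{defsi}'', but the summands in~\eqref{defsi} involve the components $\phi_{lsj}(x)$ of the solution of the \emph{full} main equation, and by Lemma~\ref{lem:psi} these are only continuous in $x$. A finite partial sum of~\eqref{defsi} is therefore merely continuous, not $C^1$, so the promised ``calculation for each $N$ where everything is classical'' is not available, and you cannot verify $\ell\vv=\la\vv$ for any of the approximants in the way you describe. The paper's device is different: one truncates the \emph{spectral data}, setting $(\la_{nk}^N,\al_{nk}^N)=(\la_{nk},\al_{nk})$ for $n\le N$ and $=(\tilde\la_{nk},\tilde\al_{nk})$ for $n>N$. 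The resulting main equation has an operator $\tilde R^N(x)$ with only finitely many nonzero blocks, so its solution $\phi^N(x)$ is $C^2$ in $x$; the function $\sigma^N$ is then built from $\phi^N$ (not from $\phi$) by the finite sum~\eqref{defsiN}, hence is $C^1$. For this genuinely finite problem one checks by direct differentiation (Lemma~\ref{lem:rel}) that the reconstructed $\vv^N$ and $\Phi^N$ satisfy equation~\eqref{eqv} with potential $(\sigma^N_*)'$ and the correct boundary conditions, and one reads off from the explicit formula~\eqref{MN} that the Weyl matrix of $L^N$ has poles $\la_{nk}^N$ with residues $\al_{nk}^N$.

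The second missing ingredient is how the limit is taken. The paper never verifies the differential equation or computes residues for the limiting problem $L$ directly. Instead it proves (i) $\sigma^N\to\sigma$ in $L_2$ and $H_2^N\to H_2$ (Lemma~\ref{lem:lemN}), and (ii) a stability lemma for the \emph{direct} problem (Lemma~\ref{lem:stab}): if the coefficients converge in this sense, then the eigenvalues converge and the suitably grouped weight matrices converge. Since by construction $(\la_{nk}^N,\al_{nk}^N)\to(\la_{nk},\al_{nk})$, the conclusion of Theorem~\ref{thm:sd} follows. Your proposed endgame --- computing $\Res_{\la=\la_{nk}}M(\la)$ by hand for the limiting $L$ --- would require exactly the termwise-differentiation regularity that is unavailable in the singular setting; rerouting through the finite problems $L^N$ together with direct-problem stability is the key idea you are missing.
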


In order to prove Theorem~\ref{thm:sd}, consider the data $\{ \la_{nk}^N, \al_{nk}^N \}_{(n, k) \in J}$ defined as follows:
\begin{equation} \label{deflaN}
\la_{nk}^N = \begin{cases}
            \la_{nk}, \quad n \le N, \\
            \tilde \la_{nk}, \quad n > N,
          \end{cases}    \quad
\al_{nk}^N = \begin{cases}
            \al_{nk}, \quad n \le N, \\
            \tilde \al_{nk}, \quad n > N,
          \end{cases}    
\quad N \in \mathbb N.          
\end{equation}

\begin{lem}
The collection $\{ \la_{nk}^N, \al_{nk}^N \}_{(n, k) \in J}$ satisfies conditions (i)-(iii) of Theorem~\ref{thm:nsc} for all sufficiently large $N$. 
\end{lem}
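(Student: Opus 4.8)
The plan is to verify the three conditions of Theorem~\ref{thm:nsc} separately, treating (i) and (ii) as essentially immediate and concentrating the real work on (iii).

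For (i), I would argue that each of the two pieces of the spliced data is ``legitimate'': for $n \le N$ the data $\{ \la_{nk}^N, \al_{nk}^N \}$ coincides with $\{ \la_{nk}, \al_{nk} \}$, which satisfies~(i) by hypothesis, and for $n > N$ it coincides with $\{ \tilde \la_{nk}, \tilde \al_{nk} \}$, the spectral data of the genuine self-adjoint problem $\tilde L = L(0, T_1, T_2, 0)$, which satisfies~(i) by the necessity part of Theorem~\ref{thm:nsc}. The only point coupling the two pieces is the requirement $\al_{nk}^N = \al_{ls}^N$ whenever $\la_{nk}^N = \la_{ls}^N$ (together with the attendant rank$=$multiplicity statement) for pairs with $n \le N < l$; here I would use~\eqref{asymptla} and~\eqref{tvv} to note that for all sufficiently large $N$ the eigenvalues with $n \le N$ and those $\tilde \la_{ls}$ with $l > N$ fall into pairwise disjoint groups~\eqref{defG}, so no such coincidence can occur. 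For (ii), I would observe that~\eqref{asymptla} and~\eqref{asymptal} are conditions on the behaviour as $n \to \iy$ only: since for $n > N$ the spliced data equals that of $\tilde L$, the remainders $\{ \varkappa_{nk}^N \}$ and $\{ K_{nk}^N \}$ differ from the (square-summable) remainders of $\tilde L$ in only finitely many terms and hence stay in $l_2$, while the numbers $\{ r_k \}$ and the projectors $\{ A_k \}$ are unchanged, being determined solely by the common matrices $T_1$, $T_2$. Thus (ii) holds for every $N$.

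The substantive part is (iii), i.e.\ completeness of the sequence $\mathcal X^N$ built from $\{ \la_{nk}^N, \al_{nk}^N \}$. By the remark following~\eqref{defY}, this is equivalent to completeness of the associated sequence $\mathcal Y^N$, and it is independent of the choice of the bases $\{ \chi_{nk} \}$, $\{ \mathcal E_{nk} \}$. Since $\{ \la_{nk}, \al_{nk} \}$ satisfies (i)--(iii), Proposition~\ref{prop:Riesz} gives that $\mathcal Y$ is a Riesz basis of $L_2((0, \pi); \mathbb C^{m \times m})$; the same reasoning applies to $\tilde{\mathcal Y}$, formed from the data of $\tilde L$. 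Because $Y_{nk}^N = Y_{nk}$ for $n \le N$ and $Y_{nk}^N = \tilde Y_{nk}$ for $n > N$ (with a common choice of bases for $n \le N$), one has $\sum_{(n,k) \in J} \| Y_{nk}^N - Y_{nk} \|_{L_2}^2 = \sum_{n > N} \| \tilde Y_{nk} - Y_{nk} \|_{L_2}^2$. The plan is to prove that the full series $\sum_{(n,k) \in J} \| \tilde Y_{nk} - Y_{nk} \|_{L_2}^2$ converges; then its tail drops below the stability radius of the Riesz basis $\mathcal Y$ once $N$ is large, and the Paley--Wiener/Bari stability theorem for Riesz bases shows that $\mathcal Y^N$ is a Riesz basis, in particular complete, whence $\mathcal X^N$ is complete.

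To obtain convergence of that series I would compare the two sequences term by term via~\eqref{defY} and~\eqref{tvv}. The trigonometric factors differ by $\| \cos(\rho_{nk} x) - \cos((n+r_k) x) \|_{L_2(0,\pi)} \le C |\varkappa_{nk}|$ (and similarly for the sine parts), which is square-summable by~\eqref{asymptla}. The vector factors $\mathcal E_{nk}$ and $\tilde{\mathcal E}_{nk}$ I would match by choosing the orthonormal bases compatibly: by~\eqref{asymptal} the rescaled weight matrices $B_{nk}$, and more precisely their group sums over $J_k$, converge to those of $\tilde L$ in the $l_2$ sense, so the subspaces $\Ran B_{nk}$ and $\Ran \tilde B_{nk}$ become close and the bases can be selected with $\sum_{(n,k)} \| \mathcal E_{nk} - \tilde{\mathcal E}_{nk} \|^2 < \iy$. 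The hard part is precisely this last step: since~\eqref{asymptal} controls only the group sums $\al_n^{(k)}$ and not the individual matrices, the comparison must be organized group by group in $\mathcal J$, and within each group subgroup by subgroup of (asymptotically) coinciding eigenvalues, invoking the projector identities $A_k A_s = 0$ ($k \ne s$) and $\sum_{k \in \mathcal J} A_k = I$ from Proposition~\ref{prop:asymptal} to align the bases correctly; the finitely many small eigenvalues of $L$ are absorbed into the ``sufficiently large $N$'' clause. The remaining estimates are routine applications of~\eqref{asymptla}, \eqref{asymptal}, and the Riesz-basis stability theorem.
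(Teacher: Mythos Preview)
Your plan is correct and runs closely parallel to the paper's proof; the only real divergence is in how you execute~(iii). You aim to choose the model bases $\{\tilde{\mathcal E}_{nk}\}$ so that $\sum_{(n,k)}\|\mathcal E_{nk}-\tilde{\mathcal E}_{nk}\|^2<\infty$, thereby making $\mathcal Y^N$ itself an $l_2$-perturbation of the Riesz basis $\mathcal Y$ (hence a Riesz basis, hence complete). The paper instead inserts an intermediate sequence $\mathcal Y^{N\bullet}$ obtained by replacing $\mathcal E_{nk}$ with $A_q\mathcal E_{nk}$ (for $k\in J_q$), shows $\sum_{n>N}\|Y_{nk}-Y_{nk}^{\bullet}\|^2\to 0$, and then observes that for each fixed $n$ and $q\in\mathcal J$ the functions $\{Y_{ns}^{\bullet}\}_{s\in J_q}$ lie in the span of $\{\tilde Y_{ns}\}_{s\in J_q}$, so completeness of $\mathcal Y^{N\bullet}$ transfers to $\mathcal Y^N$. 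Both routes rest on the same underlying estimate, namely that $\|A_q^{\perp}\mathcal E_{nk}\|\le C\|K_{nq}\|$ (which follows from $\sum_{s\in J_q}B'_{ns}=A_q+K_{nq}$ by looking at the off-diagonal block $A_q^{\perp}K_{nq}A_q$); the paper's version avoids the extra orthonormalization step you need to turn the near-orthonormal family $\{\mathcal E_{ns}\}_{s\in J_q}$ into an honest orthonormal basis $\{\tilde{\mathcal E}_{ns}\}_{s\in J_q}$ of $\Ran A_q$, at the price of only concluding completeness rather than Riesz basicity of $\mathcal Y^N$. Your approach buys the slightly stronger conclusion; the paper's buys a shorter argument.
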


\begin{proof}
Conditions (i)-(ii) are obvious, so we focus on the proof of (iii). We have to show that the sequence
$$
\mathcal Y^N := \{ Y_{nk}^N \}_{(n, k) \in J}, \quad
Y_{nk}^N := \begin{cases}
                Y_{nk}, \quad n \le N, \\
                \tilde Y_{nk}, \quad n > N,
            \end{cases}
$$
is complete in $L_2((0, \pi); \mathbb C^m)$ for each sufficiently large $N$. In view of condition (iii) of Theorem~\ref{thm:nsc}, the sequence $\mathcal X$ is complete, so $\mathcal Y$ is also complete. By virtue of Proposition~\ref{prop:Riesz}, $\mathcal Y$ is a Riesz basis. Consider the sequence
$$
\mathcal Y^{N\bullet} := \{ Y_{nk}^{N\bullet} \}_{(n, k) \in J}, \quad
Y_{nk}^{N\bullet} := \begin{cases}
                Y_{nk}, \quad n \le N, \\
                Y_{nk}^{\bullet}, \quad n > N,
            \end{cases}
$$
where $Y_{nk}^{\bullet}$ is defined similarly to $Y_{nk}$ (see \eqref{defY}), but with $\mathcal E_{nk}$ replaced by $\mathcal E_{nk}^{\bullet} := A_k \mathcal E_{nk}$. It is easy to show that
$$
\lim_{N \to \iy} \sum_{(n, k) \in J, \, n > N} \| Y_{nk} - Y_{nk}^{\bullet} \|_{L_2((0, \pi); \mathbb C^m)}^2 = 0.
$$
Consequently, the sequence $\mathcal Y^{N\bullet}$ is a Riesz basis for sufficiently large $N$, so $\mathcal Y^{N\bullet}$ is complete in $L_2((0, \pi); \mathbb C^m)$ for such $N$. It is easy to check that, for each fixed sufficiently large $n$ and each fixed $k \in \mathcal J$, the vector functions $\{ Y_{ns}^{\bullet} \}_{s \in J_k}$ are linear combinations of $\{ \tilde Y_{ns} \}_{s \in J_k}$. This implies that $\mathcal Y^N$ is also complete in $L_2((0, \pi); \mathbb C^m)$.
\end{proof}

By using $\{ \la_n^N, \al_n^N \}_{(n, k) \in J}$ and the model problem $\tilde L = L(0, T_1, T_2, 0)$, construct the element $\tilde \phi^N(x)$ and the operator $\tilde R^N(x)$ similarly to $\tilde \phi(x)$ and $\tilde R(x)$, respectively. Let $\phi^N(x)$ be the solution of the main equation
\begin{equation} \label{mainN}
\phi^N(x) (\mathcal I + \tilde R^N(x)) = \tilde \phi^N(x), \quad x \in [0, \pi],
\end{equation}
analogous to~\eqref{main}. By virtue of Theorem~\ref{thm:solve}, the solution of~\eqref{mainN} exists and is unique. Obviously, for the matrix sequences $\{ \phi_{nki}^N(x) \}$ and $\{ \tilde \phi_{nki}^N(x) \}$ corresponding to $\phi(x)$ and $\tilde \phi(x)$, respectively, the following relations hold:
$\tilde \phi_{nki}^N(x) = \tilde \phi_{nki}(x)$ for $n \le N$, $\phi_{nk0}^N(x) = \phi_{nk1}^N(x)$, $\tilde \phi_{nk0}^N(x) = \tilde \phi_{nk1}^N(x)$ for $n > N$. Taking these relations into account, similarly to~\eqref{defsi} and~\eqref{defH2}, we define
\begin{align} \label{defsiN}
    \sigma^N(x) := & -2 \sum_{n = 1}^{g(N)} \Biggl( \sum_{\rho_{lsj} \in G_n} (-1)^j \phi_{lsj}^N(x) T_{lsj}^{-1} \al'_{lsj} T_{lsj}^{-1} \tilde \phi_{lsj}(x)  - \frac{1}{2} \bigl( T_1 \hat \al(G_n) T_1 + T_1^{\perp} \hat \al(G_n)  T_1^{\perp}\bigr)
    \Biggr)\\ \label{defH2N}
    H_2^N := & T_2 \sum_{n = 1}^{g(N)} \Biggl( \sum_{\rho_{lsj} \in G_n} (-1)^j \phi_{lsj}^N(\pi) T_{lsj}^{-1} \al'_{lsj} T_{lsj}^{-1} \tilde \phi_{lsj}(\pi) - \bigl( T_1 \hat \al(G_n) T_1 + T_1^{\perp} \hat \al(G_n) T_1^{\perp}\bigr)
    \Biggr) T_2,
\end{align}
where $g(N)$ is such that
$$
\bigcup_{n = 1}^{g(N)} G_n = \{ \rho_{lsj} \colon (l, s) \in J, \, l \le N, \, j = 0, 1 \}.
$$
Here and above, we assume that $N$ is large enough.

Let us show that $\{ \la_{nk}^N, \al_{nk}^N \}_{(n, k) \in J}$ are the spectral data of the problem $L^N := L(\sigma^N, T_1, T_2, H_2^N)$, i.e., prove Theorem~\ref{thm:sd} for $\{ \la_{nk}^N, \al_{nk}^N \}_{(n, k) \in J}$. This special case is much easier for investigation than the general case, since the main equation~\eqref{mainN} in the element-wise form contains a finite sum:
$$
\tilde \phi^N_{nki}(x) = \phi_{nki}^N(x) + \sum_{l, s, j \colon l \le N} (-1)^j \phi^N_{lsj}(x) T_{lsj}^{-1} \al'_{lsj} \tilde D(x, \la_{lsj}, \la^N_{nki}) T_{nki}.
$$
Therefore, one can show that $\tilde R^N(x)$ is twice continuously differentiable with respect to $x \in [0, \pi]$, and so does $\phi^N(x)$ (see the proof of Lemma~1.6.9 from \cite{FY01} for details). Moreover, the sums~\eqref{defsiN} and~\eqref{defH2N} are finite, so we do not need to care of their convergence. Define the matrix functions
\begin{gather} \nonumber
\vv^N(x, \la) := \tilde \vv(x, \la) - \sum_{l, s, j \colon l \le N} (-1)^j \phi^N_{lsj}(x) T_{lsj}^{-1} \al'_{lsj} \tilde D(x, \la_{lsj}, \la), \\ \label{defPhiN}
\Phi^N(x, \la) := \tilde \Phi(x, \la) - \sum_{l, s, j \colon l \le N} (-1)^j \phi^N_{lsj}(x) T_{lsj}^{-1} \al'_{lsj} \tilde E(x, \la_{lsj}, \la), \\ \nonumber
\sigma^N_*(x) := \sigma^N(x) + C^N, \quad H_{2, *}^N := H_2^N - T_2 C^N T_2, \quad
C^N := T_1^{\perp} \sum_{n = 1}^{g(N)} \hat \al(G_n) T_1^{\perp}.
\end{gather}
Calculations yield the following lemma.

\begin{lem} \label{lem:rel}
$\vv^N(., \la) \in C^2([0, \pi]; \mathbb C^{m \times m})$ for each fixed $\la \in \mathbb C$, $\Phi^N(., \la) \in C^2([0, \pi]; \mathbb C^{m \times m})$ for each fixed $\la \ne \la_{nki}$, and $\sigma^N_* \in C^1([0, \pi]; \mathbb C^{m \times m})$.
Moreover, the following relations hold:
\begin{gather*}
    -\frac{d^2}{dx^2}\vv^N(x, \la) + \frac{d}{dx}\sigma^N_*(x) \vv^N(x, \la) = \la \vv^N(x, \la), \quad x \in (0, \pi), \\
    \vv^N(0, \la) = T_1, \quad \frac{d}{dx} \vv^N(0, \la) - \sigma^N_*(0) \vv^N(0, \la) = T_1^{\perp}, \\
    -\frac{d^2}{dx^2}\Phi^N(x, \la) + \frac{d}{dx}\sigma^N_*(x) \Phi^N(x, \la) = \la \Phi^N(x, \la), \quad x \in (0, \pi), \\
    T_1 \left( \frac{d}{dx} \Phi^N(0, \la) - \sigma_*^N(0) \Phi^N(0, \la))\right) - T_1^{\perp} \Phi^N(0, \la) = 0, \\
    T_2 \left( \frac{d}{dx} \Phi^N(\pi, \la) - (\sigma_*^N(\pi) + H_{2, *}^N) \Phi^N(\pi, \la))\right) - T_2^{\perp} \Phi^N(\pi, \la) = 0. 
\end{gather*}
\end{lem}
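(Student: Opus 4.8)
I would treat Lemma~\ref{lem:rel} as a pure verification statement: once the regularity of $\phi^N$ in $x$ is secured, all five relations follow by differentiating the \emph{finite} sums \eqref{defPhiN}, \eqref{defsiN}, \eqref{defH2N} and substituting, with the element-wise form of the main equation \eqref{mainN} supplying the cancellations. The first step is the regularity. The operator $\tilde R^N(x)$ is finite-dimensional with matrix entries built from $\tilde D(x, \la_{lsj}, \la_{\eta q i})$, which by \eqref{sm1} are $C^{\infty}$ in $x$; likewise $\tilde \phi^N(x)$ is $C^{\infty}$ in $x$. Since $\mathcal I + \tilde R^N(x)$ is boundedly invertible for every $x$ (Theorem~\ref{thm:solve}), the solution $\phi^N(x) = \tilde \phi^N(x)(\mathcal I + \tilde R^N(x))^{-1}$ depends $C^{\infty}$-smoothly on $x$ -- this is precisely the argument of \cite[Lemma~1.6.9]{FY01} alluded to in the text. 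Hence each $\phi^N_{lsj} \in C^{\infty}([0,\pi]; \mathbb C^{m \times m})$, and $\vv^N(\cdot, \la)$, being a finite linear combination of $\tilde \vv(\cdot, \la)$ and the triple products $\phi^N_{lsj}(\cdot) T_{lsj}^{-1} \al'_{lsj} \tilde D(\cdot, \la_{lsj}, \la)$, lies in $C^2$; the same holds for $\Phi^N(\cdot, \la)$ whenever $\la \ne \la_{nki}$, the only values at which $\tilde E(\cdot, \la_{lsj}, \la)$ fails to be $x$-smooth; and $\sigma^N_* = \sigma^N + C^N$, being a finite sum of products of such functions plus a constant matrix, is $C^1$.

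Next I would record the identities the substitution rests on. Because the model potential vanishes, quasi-derivatives for $\tilde \vv$ and $\tilde \Phi$ reduce to ordinary derivatives and $-\tilde \vv'' = \la \tilde \vv$, $-\tilde \Phi'' = \la \tilde \Phi$. From \eqref{defD} and the definition of $\tilde E$ one obtains the first-order identities $\partial_x \tilde D(x, \la, \mu) = \tilde \vv(x, \la) \tilde \vv(x, \mu)$ and $\partial_x \tilde E(x, \la, \mu) = \tilde \vv(x, \la) \tilde \Phi(x, \mu)$, the vanishing $\tilde D(0, \cdot, \cdot) = 0$ (since $\langle \tilde \vv(0, \la), \tilde \vv(0, \mu) \rangle = T_1 T_1^{\perp} - T_1^{\perp} T_1 = 0$), and, differentiating once more, the corresponding second-order relations for $\partial_x^2 \tilde D$ and $\partial_x^2 \tilde E$. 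I would also single out the consistency identity $\phi^N_{lsj}(x) T_{lsj}^{-1} = \vv^N(x, \la_{lsj})$, obtained by inserting \eqref{defPhiN} into the element-wise form of \eqref{mainN}; it lets the factors $\phi^N_{lsj}$ inside the sums be re-expressed through $\vv^N$, which is what makes the coefficient $\sigma^N_*$ reappear after the cancellations.

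The third step is the initial and boundary conditions. Setting $x = 0$ and using $\tilde D(0, \cdot, \cdot) = 0$ gives $\vv^N(0, \la) = \tilde \vv(0, \la) = T_1$. Differentiating, evaluating at $x = 0$, and using $\tilde \vv'(0, \la) = T_1^{\perp}$, $\partial_x \tilde D(0, \la_{lsj}, \la) = T_1$, and $\phi^N_{lsj}(0) = T_1$ (from the previous step), the correction term collapses to $-\sum (-1)^j T_1 \al'_{lsj} T_1$, which the definitions \eqref{defsiN} of $\sigma^N$ and of $C^N$ are engineered to make equal to $-\sigma^N_*(0) \vv^N(0, \la)$; hence $\frac{d}{dx} \vv^N(0, \la) - \sigma^N_*(0) \vv^N(0, \la) = T_1^{\perp}$. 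The conditions for $\Phi^N$ follow similarly -- at $x = 0$ from the endpoint value of $\tilde E$ together with $V_1^{\tilde L}(\tilde \Phi)$, and at $x = \pi$ from $V_2^{\tilde L}(\tilde \Phi) = 0$ together with the definition \eqref{defH2N} of $H_2^N$ and the shift to $H_{2,*}^N$, which is the representative (within the equivalence class described in Proposition~\ref{prop:uniq}) for which the boundary form at $x = \pi$ closes up.

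The fourth step, verifying the two differential equations, is where essentially all the work lies and is the part I expect to be the main obstacle. I would substitute $\vv^N$ into $-\frac{d^2}{dx^2} \vv^N + \frac{d}{dx}\sigma^N_* \cdot \vv^N - \la \vv^N$, expand the second derivative via $-\tilde \vv'' = \la \tilde \vv$ and the second-order $\tilde D$-identities of the previous step, and eliminate the terms carrying $(\phi^N_{lsj})'$ and $(\phi^N_{lsj})''$ by differentiating the finite main equation \eqref{mainN} once and twice in $x$. After collecting and using $\phi^N_{lsj} T_{lsj}^{-1} = \vv^N(\cdot, \la_{lsj})$, the $\la$-independent remainder must assemble precisely into $\frac{d}{dx}\sigma^N_* \cdot \vv^N$ -- this is forced by the choice \eqref{defsiN} of $\sigma^N$ -- so that $-\frac{d^2}{dx^2}\vv^N + \frac{d}{dx}\sigma^N_* \cdot \vv^N = \la \vv^N$ drops out; the equation for $\Phi^N$ is verified identically with $\tilde E$, $\tilde \Phi$ in place of $\tilde D$, $\tilde \vv$. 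The delicate points are the bookkeeping of the many (finitely many, but numerous) terms coming from differentiating sums of triple products, the non-commutativity of the matrix factors, carrying the quasi-derivative $Y^{[1]} = Y' - \sigma^N_* Y$ correctly throughout, and confirming that what remains in front of $\vv^N$ after all cancellations is exactly $\frac{d}{dx}\sigma^N_*$. Beyond this quasi-derivative adaptation, the computation follows the pattern of \cite{Bond-scal} and \cite[Lemma~1.6.9]{FY01} for the regular scalar case and introduces no new idea.
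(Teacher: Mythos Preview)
Your proposal is correct and matches the paper's approach: the paper itself offers no proof beyond the phrase ``Calculations yield the following lemma,'' and your outline is precisely the computational verification that phrase refers to --- regularity from the finite-dimensionality of $\tilde R^N(x)$ and smoothness of $\tilde D$, $\tilde E$ (as the text already notes just before the lemma, citing \cite[Lemma~1.6.9]{FY01}), then direct substitution using the identities $\partial_x\tilde D=\tilde\vv\tilde\vv$, $\partial_x\tilde E=\tilde\vv\tilde\Phi$, the consistency relation $\phi^N_{lsj}T_{lsj}^{-1}=\vv^N(\cdot,\la_{lsj})$, and the differentiated main equation \eqref{mainN}. One minor slip: in your third step the correction term at $x=0$ should equal $+\sigma^N_*(0)\vv^N(0,\la)$ rather than $-\sigma^N_*(0)\vv^N(0,\la)$ (so that subtracting it yields $T_1^{\perp}$), and the failure of $\Phi^N$ at $\la=\la_{nki}$ is a $\la$-pole of $\tilde E$, not a loss of $x$-smoothness; neither affects the argument.
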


Lemma~\ref{lem:rel} implies that $\vv^N(x, \la)$ is the $\vv$-type solution and $\Phi^N(x, \la)$ is the Weyl solution of the boundary value problem $L^N_* := L(\sigma^N_*, T_1, T_2, H_{2,*}^N)$. Hence, the Weyl matrix of $L^N_*$ has the form
$$
M^N(\la) := T_1 \Phi^N(0, \la) + T_1^{\perp} \left( \frac{d}{dx}\Phi^N(0, \la) - \sigma_*^N(0) \Phi(0, \la) \right).
$$
Using~\eqref{defPhiN}, we derive
\begin{equation} \label{MN}
M^N(\la) = \tilde M(\la) + \sum_{l, s, j \colon l \le N} \frac{(-1)^j \al'_{lsj}}{\la - \la_{lsj}}.
\end{equation}
Recall that the Weyl matrix $\tilde M(\la)$ has the poles $\{ \tilde \la_{nk} \}_{(n, k) \in J}$ and the corresponding residues $\{ \tilde \al_{nk} \}_{(n, k)}$. Consequently, it follows from~\eqref{deflaN} and \eqref{MN} that the Weyl matrix $M^N(\la)$ has the poles $\{ \la_{nk}^N \}_{(n, k) \in J}$ and the corresponding residues $\{ \al_{nk}^N \}_{(n, k) \in J}$. Thus, $\{ \la_{nk}^N, \al_{nk}^N \}_{(n, k) \in J}$ are the spectral data of the problem $L^N_* = L(\sigma_*^N, T_1, T_2, H_{2,*}^N)$. Since the transform~\eqref{transL} with $H_1^{\perp} = C^N$ does not change the spectral data, we conclude that $\{ \la_{nk}^N, \al_{nk}^N \}_{(n, k) \in J}$ are also the spectral data of $L^N = L(\sigma^N, T_1, T_2, H_2^N)$. Since $\la_{nk}^N \in \mathbb R$ and $\al_{nk}^N = (\al_{nk}^N)^{\dagger}$ for $(n, k) \in J$, one can easily show that the matrices $\sigma^N(x)$ for a.e. $x \in (0, \pi)$ and $H_2^N$ are Hermitian.

The following two lemmas can be proved similarly to Lemmas~5.6 and~5.7 from \cite{Bond-dir}.

\begin{lem} \label{lem:lemN}
$\sigma^N \to \sigma$ in $L_2((0, \pi); \mathbb C^{m \times m})$ and $H_2^N \to H_2$ as $N \to \iy$, where $\sigma$, $H_2$, $\sigma^N$, $H_2^N$ are defined by~\eqref{defsi}, \eqref{defH2}, \eqref{defsiN}, \eqref{defH2N}, respectively.
\end{lem}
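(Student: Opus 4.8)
\textbf{Proof proposal for Lemma~\ref{lem:lemN}.} The plan is to establish the $L_2$-convergence $\sigma^N \to \sigma$ and $H_2^N \to H_2$ by comparing the finite series~\eqref{defsiN},~\eqref{defH2N} with the convergent series~\eqref{defsi},~\eqref{defH2} term by term, which reduces the problem to two pieces: controlling the tail of~\eqref{defsi} (done already by Lemma~\ref{lem:L2}) and controlling the difference between the first $g(N)$ partial sums, where the summands involve $\phi_{lsj}^N$ versus $\phi_{lsj}$. So the core of the argument is the estimate $\phi^N(x) \to \phi(x)$ in an appropriate sense, uniformly in $x \in [0,\pi]$, together with quantitative control of this convergence that is summable against the weights $\|\hat\al(G_n)\|$.

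First I would subtract the main equations~\eqref{main} and~\eqref{mainN}. Writing $\phi^N - \phi = \phi^N(\mathcal I + \tilde R^N) (\mathcal I + \tilde R)^{-1} - \phi - \phi^N(\tilde R - \tilde R^N)(\mathcal I + \tilde R)^{-1}$ and using $\phi^N(\mathcal I + \tilde R^N) = \tilde\phi^N$, $\phi(\mathcal I + \tilde R) = \tilde\phi$, one obtains an identity of the schematic form
\begin{equation} \label{phidiff}
(\phi^N(x) - \phi(x))(\mathcal I + \tilde R(x)) = (\tilde\phi^N(x) - \tilde\phi(x)) + \phi^N(x)(\tilde R(x) - \tilde R^N(x)).
\end{equation}
Since $(\mathcal I + \tilde R(x))^{-1}$ is bounded uniformly in $x$ by Theorem~\ref{thm:solve} (the uniformity following from the uniform estimate~\eqref{estR} and the Fredholm argument), it suffices to show that both terms on the right-hand side of~\eqref{phidiff} tend to zero in $\mathfrak B$ as $N \to \iy$, uniformly in $x$. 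The first term $\tilde\phi^N - \tilde\phi$ vanishes in components with $n \le N$ and in the remaining components is controlled by the asymptotics~\eqref{asymptla},~\eqref{asymptal}, which force the contribution of the blocks $G_n$ with $n > N$ to go to zero; the second term is handled by $\|\phi^N(x)\|_{\mathfrak B} \le C$ (uniform boundedness of the solutions, from the uniformly bounded inverse and the uniform bound on $\tilde\phi^N$) together with the estimate~\eqref{estRkn}, which shows $\|\tilde R(x) - \tilde R^N(x)\|_{\mathfrak B \to \mathfrak B} \to 0$ as in the proof of Theorem~\ref{thm:RB}.

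Next, to transfer the convergence $\phi^N \to \phi$ in $\mathfrak B$ into $L_2$-convergence of $\sigma^N \to \sigma$, I would split $\sigma - \sigma^N$ into $\sum_{n > g(N)} (\dots)$, the tail of~\eqref{defsi}, which is $o(1)$ in $L_2$ by Lemma~\ref{lem:L2}, plus $\sum_{n \le g(N)} (\dots)$ in which the summands differ only through $\phi_{lsj}^N(x) - \phi_{lsj}(x)$. Using the representation of Lemma~\ref{lem:psi} for $\phi_{lsj}$ (and its analogue for $\phi_{lsj}^N$), the factor $T_{lsj}^{-1}\al'_{lsj}T_{lsj}^{-1}$ contributing the weight $\hat\al(G_n)$, and Proposition~\ref{prop:cos} to pass from $l_2$-bounds on coefficient sequences to $L_2$-bounds on the resulting series, one estimates $\|\sigma^N - \sigma\|_{L_2}$ by $C\,\|\phi^N - \phi\|_{\mathfrak B}$ plus a tail, both of which are $o(1)$. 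The argument for $H_2^N \to H_2$ is the same computation evaluated at $x = \pi$, with the series now in $\mathbb C^{m \times m}$ rather than $L_2$, so it is in fact simpler.

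The main obstacle, and the point requiring care, is the second term $\phi^N(x)(\tilde R(x) - \tilde R^N(x))$ in~\eqref{phidiff}: establishing that $\phi^N(x)$ is bounded in $\mathfrak B$ uniformly both in $x$ and in $N$. This is not quite immediate because $\phi^N$ solves a different main equation for each $N$; however it follows by combining the uniform bound $\|(\mathcal I + \tilde R^N(x))^{-1}\|_{\mathfrak B \to \mathfrak B} \le C$ — which holds uniformly in $N$ because the operators $\tilde R^N$ satisfy~\eqref{estR} with the same $\Xi$ and the homogeneous equation~\eqref{homo} has only the trivial solution for each $N$ by the same argument as in Theorem~\ref{thm:solve}, now using completeness of $\mathcal X^N$ — with the uniform bound on $\|\tilde\phi^N(x)\|_{\mathfrak B}$. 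Once this uniform bound is in hand, the rest of the proof is a routine, if somewhat lengthy, bookkeeping exercise of the type already carried out in~\cite[Lemmas~5.6--5.7]{Bond-dir}, and indeed the lemma statement explicitly appeals to that analogy.
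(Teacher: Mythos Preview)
The paper does not give its own proof of this lemma; it simply asserts that the argument is analogous to Lemmas~5.6--5.7 of~\cite{Bond-dir}. Your outline --- subtract the two main equations, invert $(\mathcal I + \tilde R(x))$, control the two right-hand side terms, then split $\sigma - \sigma^N$ into a tail (handled by Lemma~\ref{lem:L2}) plus a partial-sum difference governed by $\|\phi^N - \phi\|_{\mathfrak B}$ --- is exactly the standard route for this type of result in the method of spectral mappings, and is what the referenced argument does. So your strategy is correct and matches the intended proof.

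Two technical points deserve more care than you give them. First, $\phi^N$ a priori lives in a Banach space $\mathfrak B^N$ built from the groups $G_n^N$ associated with $\{\rho_{nki}^N\}$, not in the original $\mathfrak B$; to make sense of~\eqref{phidiff} you must either identify the two spaces or, more directly, note that for the purposes of~\eqref{defsiN} only the components $\phi_{lsj}^N$ with $l \le N$ are needed, and these satisfy a closed finite system obtained from~\eqref{mainN} (the terms with $l > N$ cancel since $\rho_{ls0}^N = \rho_{ls1}^N$ there). Second, your justification for the uniform-in-$(x,N)$ bound on $\|(\mathcal I + \tilde R^N(x))^{-1}\|$ is not quite right: triviality of the kernel for each $N$ gives invertibility for each $N$, but not a bound independent of $N$. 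The clean argument is a perturbation: once you know $\|\tilde R(x) - \tilde R^N(x)\|_{\mathfrak B \to \mathfrak B} \to 0$ uniformly in $x$ (which you already use) and that $\|(\mathcal I + \tilde R(x))^{-1}\|$ is bounded uniformly in $x$ (which follows from continuity of $x \mapsto \tilde R(x)$ in operator norm and compactness of $[0,\pi]$), a Neumann-series estimate gives the uniform bound on $(\mathcal I + \tilde R^N(x))^{-1}$ for all large $N$. With these adjustments your proof goes through.
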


\begin{lem} \label{lem:stab}
Suppose that $\sigma$ and $\sigma^N$, $N \in \mathbb N$, are arbitrary Hermitian matrix functions from $L_2((0, \pi); \mathbb C^{m \times m})$ such that $\sigma^N \to \sigma$ in $L_2((0, \pi); \mathbb C^{m \times m})$ as $N \to \iy$ and $H_2$, $H_2^N$, $N \in \mathbb N$, are arbitrary Hermitian matrices from $\mathbb C^{m \times m}$ such that $H^N \to H$ as $N \to \iy$. 
Let $\{ \la_{nk}, \al_{nk} \}_{(n, k) \in J}$ and $\{ \la_{nk}^N, \al_{nk}^N \}_{(n, k) \in J}$ be the spectral data of the problems $L(\sigma, T_1, T_2, H_2)$ and $L(\sigma^N, T_1, T_2, H_2^N)$, respectively.
Then, for each fixed $(n, k) \in J$, 
$$
\lim\limits_{N \to \iy} \la^N_{nk} = \la_{nk}.
$$
Let $\la_{n_1 k_1} = \la_{n_2 k_2} = \dots = \la_{n_r k_r}$ be a group of multiple eigenvalues of $L$, maximal by inclusion. Then
$$
    \lim_{N \to \iy} \sum_{j = 1}^r \al^{N'}_{n_j k_j} = \al_{n_1 k_1}.
$$
\end{lem}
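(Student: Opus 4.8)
The plan is to treat Lemma~\ref{lem:stab} as a continuity statement for the direct spectral map and to extract convergence of the individual eigenvalues and of the grouped weight matrices by a contour-integration argument. First I would rewrite equation~\eqref{eqv} as a first-order system for $(Y, Y^{[1]})^{\top}$ with a coefficient matrix $\mathcal A_{\sigma}(x,\la)$ whose entries lie in $L_1((0,\pi);\mathbb C^{m\times m})$ for each fixed $\la$ (here $\sigma$ and $\sigma^2$ enter linearly, and $\|(\sigma^N)^2-\sigma^2\|_{L_1}\le\|\sigma^N-\sigma\|_{L_2}(\|\sigma^N\|_{L_2}+\|\sigma\|_{L_2})$, so $\sigma\mapsto\mathcal A_{\sigma}(\cdot,\la)$ is continuous from $L_2$ into $L_1$, uniformly for $\la$ in compact sets). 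Gronwall's inequality for linear systems with $L_1$ coefficients then yields that the pairs $(\vv^N,\vv^{N[1]})$, as well as the pairs $(S^N,S^{N[1]})$ for the second solution with $S^N(0,\la)=-T_1^{\perp}$, $S^{N[1]}(0,\la)=T_1$, converge as $N\to\iy$ to $(\vv,\vv^{[1]})$ and $(S,S^{[1]})$ uniformly on $[0,\pi]\times K$ for every compact $K\subset\mathbb C$. Consequently $\Delta^N(\la)=V_2^N(\vv^N(\cdot,\la))\to\Delta(\la)$ uniformly on compacts, and since $M(\la)=-(\Delta(\la))^{-1}V_2(S(\cdot,\la))$ (and likewise $M^N(\la)=-(\Delta^N(\la))^{-1}V_2^N(S^N(\cdot,\la))$, with $V_2^N$ built from $H_2^N$), we obtain $M^N\to M$ uniformly on every compact set on which $\det\Delta$ does not vanish.

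Next I would prove the eigenvalue part. Since $\sup_N\|\sigma^N\|_{L_2}<\iy$ and $\sup_N\|H_2^N\|<\iy$, the asymptotics~\eqref{asymptla} hold with a remainder bound uniform in $N$; in particular the spectra of $L(\sigma^N,T_1,T_2,H_2^N)$ cluster near the points $(n+r_k)^2$ uniformly in $N$ (the $r_k$ depend only on $T_1,T_2$) and are uniformly bounded below. Fixing $(n_0,k_0)\in J$, choose $R$ so large that the circle $|\la|=R^2$ lies in a region free of eigenvalues of $L$ and of all $L(\sigma^N,\dots)$ for large $N$, and $|\la|<R^2$ contains $\la_{n_0k_0}$. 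On $|\la|=R^2$ we have $\det\Delta\ne0$ and $\det\Delta^N\to\det\Delta$, so by Rouché's theorem $\det\Delta^N$ and $\det\Delta$ have the same number $\mathfrak n$ of zeros (with multiplicities) inside $|\la|<R^2$ for large $N$; applying Rouché again on small circles about each distinct eigenvalue-value of $L$ in that disk shows that, for large $N$, the eigenvalues of $L(\sigma^N,\dots)$ there split into clusters with the correct total multiplicities and no surplus. Together with the uniform lower bound this forces them to be exactly the $\mathfrak n$ smallest eigenvalues of $L(\sigma^N,\dots)$, so the non-decreasing enumeration matches that of $L$ index by index and $\la_{nk}^N\to\la_{nk}$; since $R$ is arbitrary this holds for all $(n,k)\in J$.

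For the weight matrices, let $\la_{n_1k_1}=\dots=\la_{n_rk_r}=\mu$ be a maximal group of multiple eigenvalues of $L$ and take a small circle $\gamma$ whose closed disk contains $\mu$ as the only eigenvalue of $L$. By the previous step, for large $N$ the only eigenvalues of $L(\sigma^N,\dots)$ inside $\gamma$ are $\la_{n_1k_1}^N,\dots,\la_{n_rk_r}^N$, and any maximal group of multiple eigenvalues of $L(\sigma^N,\dots)$ meeting $\{(n_1,k_1),\dots,(n_r,k_r)\}$ lies inside $\gamma$, hence inside that index set. Therefore $\sum_{j=1}^r\al_{n_jk_j}^{N'}$ equals the sum of $\Res_{\la=\nu}M^N(\la)$ over the distinct values $\nu$ among $\{\la_{n_jk_j}^N\}$, which equals $\frac{1}{2\pi i}\oint_{\gamma}M^N(\la)\,d\la$, since all poles of $M^N$ inside $\gamma$ are simple and lie among those $\nu$. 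Letting $N\to\iy$ and using $M^N\to M$ uniformly on $\gamma$ gives $\sum_{j=1}^r\al_{n_jk_j}^{N'}\to\frac{1}{2\pi i}\oint_{\gamma}M(\la)\,d\la=\Res_{\la=\mu}M(\la)=\al_{n_1k_1}$, which is the assertion.

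The main obstacle, I expect, is the index bookkeeping in the second step: making the cluster counts and the ordering precise requires the eigenvalue asymptotics to be uniform in $N$, so that a common eigenvalue-free contour can be chosen and no eigenvalue escapes to, or arrives from, infinity, together with the stability of the zero count of $\det\Delta^N$ under uniform convergence. A related point in the third step is to verify that contracting to the circle $\gamma$ never splits a maximal group of multiple eigenvalues of $L(\sigma^N,\dots)$ — this is exactly what ensures that $\sum_{j=1}^r\al_{n_jk_j}^{N'}$ reproduces the whole residue and nothing extraneous. Everything else is a routine combination of Gronwall estimates for $L_1$-systems and the residue calculus.
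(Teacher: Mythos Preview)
Your argument is correct and is precisely the kind of continuity-of-the-direct-map argument one expects here: pass to the first-order system, use Gronwall to get $\vv^N\to\vv$ and $S^N\to S$ uniformly on $[0,\pi]\times K$, deduce $\Delta^N\to\Delta$ and $M^N\to M$ uniformly on compacts away from the spectrum, then apply Rouch\'e for the eigenvalues and the residue theorem on a small contour for the grouped weight matrices. The bookkeeping you flag (uniform-in-$N$ asymptotics to fix an eigenvalue-free contour, and the fact that for large $N$ no $L^N$-multiplicity group can straddle the circle $\gamma$) is exactly the point that needs care, and your treatment of it is sound.

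The paper does not give its own proof of this lemma; it simply states that it ``can be proved similarly to Lemmas~5.6 and~5.7 from \cite{Bond-dir}''. The argument in that reference proceeds along the same lines as yours --- continuity of the fundamental solutions with respect to $\sigma\in L_2$, convergence of the characteristic determinant and of the Weyl matrix, followed by Rouch\'e and residue calculus --- so there is no substantive difference in approach to report.
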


Lemmas~\ref{lem:lemN} and~\ref{lem:stab} together with~\eqref{deflaN} prove Theorem~\ref{thm:sd} for $\{ \la_{nk}, \al_{nk} \}_{(n, k) \in J}$. Theorems~\ref{thm:solve},\ref{thm:sd} and Lemmas~\ref{lem:psi}, \ref{lem:L2} yield the sufficiency part of Theorem~\ref{thm:nsc}. Our proof of sufficiency in Theorem~\ref{thm:nsc} is constructive and provides the following algorithm for solving Inverse Problem~\ref{ip:1}.

\begin{alg} \label{alg:1}
Suppose that the orthogonal projection matrices $T_1$, $T_2$ and the data $\{ \la_{nk}, \al_{nk} \}_{(n, k) \in J}$ satisfying conditions (i)-(iii) of Theorem~\ref{thm:nsc} be given. We have to find $\sigma$ and $H_2$.

\begin{enumerate}
    \item Find $r_k$ and $A_k$ by the formulas
\begin{equation*} 
r_k = \lim_{n \to \iy} (\sqrt{\la_{nk}} - n), \quad 
A_k = \frac{\pi}{2}\lim_{n \to \iy} (T_1 + n^{-1} T_1^{\perp}) \al_n^{(k)} (T_1 + n^{-1} T_1^{\perp}), \quad k = \overline{1, m}.
\end{equation*}
    \item Fix the model problem $\tilde L := L(0, T_1, T_2, 0)$ and find $\{ \tilde \la_{nk}, \tilde \al_{nk} \}_{(n, k) \in J}$, $\{ \tilde \vv(x, \la_{nki}) \}_{(n, k) \in J, \, i = 0, 1}$, by using~\eqref{tvv}, \eqref{tal} and $\tilde \la_{nk} = (\tilde \rho_{nk})^2$.
    \item Find $\tilde D(x, \la_{lsj}, \la_{nki})$ by~\eqref{defD} for $(l, s), (n, k) \in J$, $i, j = 0, 1$.
    \item Divide the values $\{ \rho_{nki} \}$ into the groups $\{ G_n \}_{n = 1}^{\iy}$ according to~\eqref{defG}.
    \item Construct the Banach space $\mathfrak B$, the sequence $\tilde \phi(x) \in \mathfrak B$, and the operator $\tilde R(x) \colon \mathfrak B \to \mathfrak B$ for each fixed $x \in [0, \pi]$ as it is described in Section~3.
    \item Find the solution $\phi(x)$ of the main equation~\eqref{main}.
    \item Using the elements $\{ \phi_{lsj}(x) \}$ of $\phi(x)$, construct $\sigma$ and $H_2$ by formulas~\eqref{defsi} and~\eqref{defH2}, respectively.
\end{enumerate}
\end{alg}

In view of Proposition~\ref{prop:uniq}, the solution constructed by Algorithm~\ref{alg:1} is not the only solution of Inverse Problem~\ref{ip:1}. All the other solutions can be obtained by applying transform~\eqref{transL}.

\noindent Natalia Pavlovna Bondarenko \\
1. Department of Applied Mathematics and Physics, Samara National Research University, \\
Moskovskoye Shosse 34, Samara 443086, Russia, \\
2. Department of Mechanics and Mathematics, Saratov State University, \\
Astrakhanskaya 83, Saratov 410012, Russia, \\
e-mail: {\it BondarenkoNP@info.sgu.ru}

\end{document}